\theoremstyle{plain}
\newtheorem{theorem}{Theorem}[section]
\crefname{theorem}{Theorem}{Theorems}
\Crefname{theorem}{Theorem}{Theorems}
\newtheorem*{lemma*}{Lemma}
\newtheorem{lemma}[theorem]{Lemma}
\crefname{lemma}{Lemma}{Lemmas}
\Crefname{lemma}{Lemma}{Lemmas}
\newtheorem*{claim*}{Claim}
\crefname{claim}{Claim}{Claims}
\Crefname{claim}{Claim}{Claims}
\newtheorem{proposition}[theorem]{Proposition}
\crefname{proposition}{Proposition}{Propositions}
\Crefname{proposition}{Proposition}{Propositions}
\newtheorem{corollary}[theorem]{Corollary}
\crefname{corollary}{Corollary}{Corollaries}
\Crefname{corollary}{Corollary}{Corollaries}
\newtheorem{conjecture}[theorem]{Conjecture}
\crefname{conjecture}{Conjecture}{Conjectures}
\Crefname{conjecture}{Conjecture}{Conjectures}
\crefname{question}{Question}{Questions}
\Crefname{question}{Question}{Questions}
\crefname{observation}{Observation}{Observations}
\Crefname{observation}{Observation}{Observations}
\crefname{example}{Example}{Examples}
\Crefname{example}{Example}{Examples}
\theoremstyle{definition}
\crefname{problem}{Problem}{Problems}
\Crefname{problem}{Problem}{Problems}
\newtheorem{definition}[theorem]{Definition}
\crefname{definition}{Definition}{Definitions}
\Crefname{definition}{Definition}{Definitions}
\newtheorem{remark}[theorem]{Remark}
\xpatchcmd{\proof}{\itshape}{\normalfont\proofnamefont}{}{}
\newcommand{\proofnamefont}{}
\renewcommand{\proofnamefont}{\bfseries}
\newcommand{\remove}[1]{}
\newcommand{\al}{\alpha}
\newcommand{\eps}{\varepsilon}
\newcommand{\lam}{\lambda}
\DeclareMathOperator{\ex}{ex}
\newcommand{\expo}{{\rm exp}}
\newcommand{\C}{\mathcal{C}}
\newcommand{\A}{\mathcal{A}}
\newcommand{\ba}{{\bf a}}
\newcommand{\cC}{\mathcal{C}}
\newcommand{\E}{\mathbb{E}}
\newcommand{\F}{\mathcal{F}}
\newcommand{\G}{\mathcal{G}}
\newcommand{\cH}{\mathcal{H}}
\newcommand{\I}{\mathcal{I}}
\newcommand{\K}{\mathcal{K}}
\newcommand{\bk}{{\bf k}}
\newcommand{\cS}{\mathcal{S}}
\newcommand{\Pb}{\mathbb{P}}
\newcommand{\hb}{\hat{\beta}}
\newcommand{\hc}{\hat{C}}
\begin{document}

\begin{frontmatter}[classification=text]


\author[tjiang]{Tao Jiang\thanks{Research supported by National Science Foundation grant DMS-1855542.}}
\author[slong]{Sean Longbrake\thanks{ Research partially supported by National Science Foundation grant DMS-1855542.}}

\begin{abstract}
In a ground-breaking paper solving a conjecture of Erd\H{o}s on the number of
$n$-vertex graphs not containing a given even cycle, Morris and Saxton  \cite{MS}
made a broad conjecture on so-called balanced supersaturation property
of a bipartite graph $H$. Ferber, McKinley, and Samotij \cite{FMS} established a weaker version of this conjecture  and applied it to  derive far-reaching results on the enumeration problem of $H$-free graphs.

In this paper, we show that Morris and Saxton's conjecture holds under
a very mild assumption about $H$, which is widely believed to hold whenever $H$ contains a cycle. We then use our theorem to obtain enumeration results and general upper bounds on the Tur\'an number of a bipartite $H$ in the random graph $G(n,p)$, the latter being the first of its kind.\end{abstract}
\end{frontmatter}

\section{Introduction}

Given a graph $H$, we say that a graph $G$ is {\it $H$-free} if it doesn't contain $H$ as a subgraph.
For a family $\F$ of graphs, we say $G$ is {\it $\F$-free } if it doesn't contain any member of $\F$ as a subgraph.
For a given positive integer $n$ and a graph $H$, the {\it extremal number} $\ex(n,H)$ denotes the maximum number of edges in an $H$-free $n$-vertex graph. (For a family $\F$, $\ex(n,\F)$ is analogously defined for $\F$-free graphs.)
A central problem in extremal graph theory is to determine the extremal number $\ex(n,H)$
and the typical structure of $H$-free graphs. Such a study was initiated by Tur\'an \cite{turan} in the 1940s, who determined precisely
the extremal number of the complete graph. The problem of studying $\ex(n,H)$ is therefore also referred to as the {\it Tur\'an problem}.
 Erd\H{o}s and Stone \cite{ES-stone} (see also \cite{ES-1966}) determined asymptotically the extremal number $\ex(n,H)$ for any non-bipartite graph $H$,  thus leaving estimating $\ex(n,H)$ for a bipartite graph $H$ the
main remaining challenge in the field. K\H{o}v\'ari, S\'os and Tur\'an \cite{KST} showed that $\ex(n,K_{s,t})=O(n^{2-1/s})$,
where $K_{s,t}$ denotes the complete bipartite graph with part sizes $s$ and $t$. This was later shown to be asymptotically tight when $t>s!$ by Koll\'ar, R\'onyai, and Szab\'o \cite{KRS}
and when $t>(s-1)!$ by Alon, R\'onyai, and Szab\'o \cite{ARS}.  Both of these were obtained via algebraic constructions.
More recently, an innovative random algebraic approach had led to many new
tight lower bound constructions, see  \cite{BBK, bukh, BC} for instance. 
In particular, Bukh and Conlon \cite{BC} applied the method to settle a long-standing conjecture that asserts that 
for every rational number $\al$ in $[1,2)$ there is a family $\F$ of bipartite graphs for which $\ex(n,\F)=\Theta(n^\al)$.
A series of recent progresses have also been made on the related exponent conjecture for single bipartite graphs $H$ (rather than
forbidding a family $\F$), resulting in many rationals $\al\in [1,2]$ and bipartite graphs $H$ for which $\ex(n,H)=\Theta(n^\al)$,
see for instance \cite{CJ,CJL, Janzer, JJM,JMY, JQ}.  Another general result  is due to F\"uredi \cite{Furedi}, and independently
to Alon, Krivelevich, and Sudakov \cite{AKS} that asserts that $\ex(n,H)=O(n^{2-1/s})$ for every bipartite graph $H$ where
vertices in one part have degree at most $s$ (see \cite{GJN, JL} for recent generalizations of this result). 
Alon, Krivelevich, Sudakov \cite{AKS}, in addition, showed that
$\ex(n,H)=O(n^{2-1/4s})$ for every $s$-degenerate bipartite graph $H$. Despite these substantial progresses on the
bipartite Tur\'an problem, it remains to be the case that for most bipartite graphs $H$ there exist substantial gaps between
best known lower and upper bounds on $\ex(n,H)$, even for even cycles $C_{2\ell}$, where $\ell\neq 2,3,5$. For more background,
the reader is referred to the excellent survey by F\"uredi and Simonovits \cite{FS-survey}.

Erd\H{o}s, Kleitman and Rothschild \cite{EKR} introduced
the problem of counting $H$-free graphs on $n$ vertices.
They showed that there are $2^{(1+o(1)) \ex(n,K_r)}$ $K_r$-free graphs,
and furthermore that almost all triangle-free graphs are bipartite.
Erd\H{o}s, Frankl and R\"odl \cite{EFR} generalized the former result, showing that
for any non-bipartite $H$ there are $2^{(1+o(1))\ex(n,H)}$ $H$-free graphs.
Kolaitis, Pr\"omel and Rothschild \cite{KPR} generalized
the latter result,  showing that almost all $K_r$-free graphs are $(r-1)$-partite.
This was further extended by Pr\"omel and Steger \cite{PS} to all $r$-critical graphs.
Using hypergraph regularity method, Nagle, R\"odl and Schacht \cite{NRS} showed
that there are $2^{(1+o(1))\ex(n,H)}$ $H$-free $k$-uniform hypergraphs for any non-$k$-partite
$k$-uniform hypergraph $H$.
Balogh, Morris
and Samotij \cite{BMS} and Saxton and Thomason \cite{ST} reproved this result 
using the hypergraph container method.
 Balogh, Bollob\'as and Simonovits \cite{BBS1, BBS2, BBS3} proved
more precise counting and structural results for graphs.

For bipartite $H$,  Kleitman and Winston \cite{KW1982} made the first breakthrough by  showing that there are at most
$2^{(1+c) n^{3/2}}$ $C_4$-free graphs on $n$ vertices, where $c\approx 0.0819$ and
resolving a long-standing question of Erd\H{o}s.  No further progress was made until the recent significant work of
Balogh and Samotij \cite{BS1, BS2}, who showed for every $2\leq s\leq t$ that there are at most $2^{(O(n^{2-1/s}))}$ $K_{s,t}$-free
graphs on $n$ vertices. The next major breakthrough was made by Morris and Saxton \cite{MS}, who showed that the number of $C_{2\ell}$-free graphs
on $n$ vertices is at most $2^{O(n^{1+1/\ell})}$, confirming a conjecture of Erd\H{o}s. There are two key ingredients in Morris and Saxton's work. 
One is the framework of the container method  and the other is the so-called balanced supersaturation property of
$C_{2\ell}$, stated in one of their main theorems as below.

\begin{theorem} [Morris-Saxton \cite{MS}] \label{MS-cycle-supersat}
For every $\ell \geq 2$, there exist constants $C>0,\delta>0$ and $k_0\in \mathbb{N}$ such that the following
holds for every $k\geq k_0$ and every $n\in \mathbb{N}$. Given a graph $G$ with $n$ vertices and $kn^{1+1/\ell}$ edges,
there exist a collection $\cH$ of copies of $C_{2\ell}$ in $G$, satisfying:
\begin{enumerate}
\item[{\rm (a)}] $|\cH|\geq \delta k^{2\ell} n^2$, and

\item[{\rm (b)}] $d_\cH(\sigma)\leq C\cdot k^{2\ell-|\sigma|-\frac{|\sigma|-1}{\ell-1}} n^{1-1/\ell}$ for every $\sigma \subset E(G)$
with $1\leq |\sigma|\leq 2\ell-1$,
where $d_\cH(\sigma)=|\{A\in \cH: \sigma\subset A\}|$ denotes the `degree' of the set $\sigma$ in $\cH$.
\end{enumerate}
\end{theorem}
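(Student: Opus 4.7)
The plan is to follow the strategy of Morris and Saxton: reduce to a subgraph with controlled degrees, produce many $C_{2\ell}$'s via path-counting, and verify the balance condition (b) by bounding the number of cycles through each fixed edge subset. Concretely, I would first clean $G$ to a subgraph $G' \subseteq G$ with $|E(G')|=\Omega(kn^{1+1/\ell})$ in which every vertex has degree $\Theta(d)$, where $d := k n^{1/\ell}$, by iteratively deleting vertices of degree below $d/2$ and then pruning vertices of excessively high degree (or passing to a random subgraph). The number of closed walks of length $2\ell$ in $G'$ is then at least $\lambda_1(G')^{2\ell} \geq (2|E(G')|/n)^{2\ell} = \Omega(k^{2\ell} n^2)$, and after subtracting walks that revisit a vertex (which are controlled by the degree bounds) one obtains $\Omega(k^{2\ell} n^2)$ genuine copies of $C_{2\ell}$ in $G'$.

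Next I would iteratively delete edges that lie in fewer than $\tfrac{1}{2}\delta\, k^{2\ell-1} n^{1-1/\ell}$ cycles of $C_{2\ell}$ until no such edge remains. Since each deletion removes few cycles, this procedure destroys at most half of the initial cycles, leaving a graph in which (i) the number of $C_{2\ell}$'s is still $\Omega(k^{2\ell} n^2)$ and (ii) every surviving edge lies in $\Omega(k^{2\ell-1} n^{1-1/\ell})$ copies of $C_{2\ell}$. Taking $\cH$ to be the collection of all $C_{2\ell}$'s in this final graph then gives condition (a) directly from (i).

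For condition (b), fix $\sigma\subseteq E(G')$ of size $s$ with $1\leq s\leq 2\ell-1$. If $\sigma$ embeds in some $C_{2\ell}$, it appears there as a disjoint union of subpaths, and any $C_{2\ell}\in\cH$ containing $\sigma$ is determined by how the remaining arcs are filled. The number of length-$j$ paths between specified endpoints of such an arc can be bounded by $O(d^{j-1})$ using the max-degree, and more sharply by roughly $O(d^{j}/n)$ via path-counting inputs from the extremal numbers $\ex(n,C_{2j})=O(n^{1+1/j})$ for $j\leq \ell$. Multiplying the per-arc bounds and optimising over the component structure of $\sigma$ (how the $s$ edges are distributed among the arcs of a containing $C_{2\ell}$) should deliver the target exponent $2\ell - s - (s-1)/(\ell-1)$ of $k$; note that the two extreme cases $s=1$ and $s=2\ell-1$ reduce respectively to the path-count estimate $d^{2\ell-1}/n$ and to the trivial bound $1$, consistent with the exponent.

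\textbf{Main obstacle.} The hard part is obtaining uniform control in (b). For most subsets the generic path-counting bound applies, but certain ``anomalous'' $\sigma$ — those lying in local configurations that support more extensions than the generic estimate — have to be handled separately, by further pruning cycles from $\cH$. One must verify that this pruning costs only a constant factor in $|\cH|$, so that (a) survives. The interplay between the component structure of $\sigma$, the repeated application of $\ex(n,C_{2j})$ for various $j<\ell$, and the careful accounting of cycle losses during the anomaly-deletion step is the crux of the Morris--Saxton argument, and arranging the bookkeeping so that the exponent $2\ell - s - (s-1)/(\ell-1)$ emerges precisely requires a delicate case analysis, probably by induction on either $s$ or $\ell$.
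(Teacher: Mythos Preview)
Your outline follows the original Morris--Saxton argument, and the strategy is sound; the obstacle you flag (handling anomalous $\sigma$ and the attendant bookkeeping) is exactly where the bulk of their work lies. The present paper, however, reproves the theorem by a genuinely different and more general method. Rather than cleaning to near-regular degree, counting cycles directly via path-counts, and then analysing the arc/component structure of each $\sigma$, the paper builds $\cH$ \emph{greedily}: it declares a subset $S\subseteq E(G)$ \emph{saturated} once $d_\cH(S)$ reaches half the target bound, and shows that as long as $|\cH|$ is below the target size one can always add a new copy of $C_{2\ell}$ containing no saturated set. The existence of such a ``good'' copy is established by a random-sampling argument (the Key Lemma): take a uniformly random $W\subseteq V(G)$ of size $np$ with $p\approx k^{-1/(2-\alpha)}$, delete one edge from every saturated set that survives inside $G[W]$, and invoke the Erd\H{o}s--Simonovits supersaturation of $C_{2\ell}$ to show the remaining subgraph still has many copies; averaging over $W$ then yields enough good copies globally. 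The parameter $m_2^*(C_{2\ell})=1$ gives $\lam^*(\al,C_{2\ell})=\ell/(\ell-1)$, and plugging $|\cH|\geq \delta k^{2\ell}n^2$ and $e(G)=kn^{1+1/\ell}$ into the codegree inequality produces the exponent $2\ell-|\sigma|-(|\sigma|-1)/(\ell-1)$ directly, with no case analysis on the shape of $\sigma$ whatsoever. What your route buys is an explicit structural picture (arc decompositions, path counts between prescribed endpoints); what the paper's route buys is brevity and generality --- the same Key Lemma gives balanced supersaturation for any $r$-partite $r$-graph $H$ satisfying a mild Tur\'an-exponent hypothesis, so the $C_{2\ell}$ case becomes a two-line corollary rather than a standalone argument.
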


Using their container method framework and Theorem~\ref{MS-cycle-supersat}, Morris and Saxton
~\cite{MS} were able to not only obtain the enumeration result on the number of $C_{2\ell}$-free graphs on $n$ vertices,
but also make further progress on the Tur\'an number of $C_{2\ell}$ in the Erd\H{o}s-R\'enyi random graph $G(n,p)$,
where as usual $G(n,p)$ denotes the random graph on $[n]$ where each pair $ij$ is included as an edge independently with
probability $p$. Given a graph $H$, let 
\[\ex(G(n,p),H):=\max\{e(G): G\subset G(n,p) \mbox{ and } G \mbox { is $H$-free} \}.\]
Note  that both $G(n,p)$ and $\ex(G(n,p),H)$ are random variables.

The problem of determining the threshold function for 
the maximum number of edges in an $H$-free subgraph of $G(n,p)$
has received much attention.
For more thorough discussion, the reader is referred to the excellent survey by R\"odl and Schacht~\cite{RS}.
The most significant work was the following breakthrough, first independently obtained by Conlon and Gowers \cite{CG}
(under the assumption that $H$ is strictly $2$-balanced, see Definition~\ref{r-density-definition}) and by Schacht \cite{Schacht},
then reproved using the container method by Balogh, Morris and Samotij \cite{BMS} and independently by Saxton and Thomason~\cite{ST}.
\begin{theorem} [Conlon-Gowers \cite{CG}, Schacht \cite{Schacht}] \label{random-main}
Let $H$ be a graph with $\Delta(H)\geq 2$ and chromatic number $\chi(H)$.
Let $m_2(H)=\max\{(e(F)-1)/(v(F)-2): F\subseteq H, v(F)\geq 3\}$.
For every $\eps>0$ there exists a constant $C>0$ such that if $p\geq Cn^{-1/m_2(H)}$, then 
\[\ex(G(n,p), H)\leq (1-\frac{1}{\chi(H)-1}+\eps) \binom{n}{2} p,\]
with high probability, as $n\to \infty$.
\end{theorem}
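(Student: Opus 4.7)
The strategy is to apply the hypergraph container method, following the re-derivations of \cite{BMS, ST}. Let $N:=\binom{n}{2}$ and consider the $e(H)$-uniform auxiliary hypergraph $\mathcal{G}$ on vertex set $E(K_n)$ whose hyperedges are the edge sets of copies of $H$ in $K_n$. Since $H$-free subgraphs of $K_n$ are precisely the independent sets of $\mathcal{G}$, it suffices to show that with high probability every such independent set contained in $E(G(n,p))$ has size at most $(1-1/(\chi(H)-1)+\varepsilon)Np$.

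The main structural input is a \emph{balanced supersaturation} statement for $H$: there exist constants $\delta,\varepsilon_0,C_0>0$ such that every $n$-vertex graph $G$ with $e(G)\geq (1-1/(\chi(H)-1)+\varepsilon_0)N$ edges contains a collection $\mathcal{F}$ of copies of $H$ with $|\mathcal{F}|\geq \delta\, n^{v(H)}$ and codegrees $d_{\mathcal{F}}(\sigma)$ bounded, for every $\sigma\subseteq E(G)$ with $1\leq|\sigma|\leq e(H)-1$, by quantities whose exponents are dictated by $m_2(H)$. Such a statement can be established by first applying Erd\H{o}s--Stone to guarantee many copies once $e(G)$ exceeds the Tur\'an number, and then iteratively pruning edges of excessively large codegree until the remaining family is balanced, losing only a constant factor in $|\mathcal{F}|$ at each step.

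Feeding $\mathcal{G}$ together with this supersaturation bound into the container theorem of \cite{BMS,ST} produces a family $\mathcal{C}$ of containers in $K_n$ with (i) every $H$-free graph on $[n]$ is a subgraph of some $C\in\mathcal{C}$; (ii) each $C\in\mathcal{C}$ satisfies $e(C)\leq (1-1/(\chi(H)-1)+\varepsilon/2)N$; and (iii) $|\mathcal{C}|\leq \exp\!\bigl(C_1\, n^{2-1/m_2(H)}\log n\bigr)$. Item (ii) holds because the container algorithm can be iterated until the remaining graph falls below the balanced supersaturation threshold, while item (iii) is exactly the bound afforded by the codegree conditions in the supersaturation step. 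For each fixed $C\in\mathcal{C}$, Chernoff's inequality gives
\[
\Pr\!\left[\,|E(G(n,p))\cap C|\geq \bigl(1-\tfrac{1}{\chi(H)-1}+\varepsilon\bigr)Np\,\right]\leq \exp(-c\varepsilon^2 Np),
\]
and a union bound over $\mathcal{C}$ yields total failure probability at most $\exp\!\bigl(C_1 n^{2-1/m_2(H)}\log n - c\varepsilon^2 n^2 p\bigr)$, which is $o(1)$ as soon as $p\geq C n^{-1/m_2(H)}$ for $C=C(H,\varepsilon)$ sufficiently large.

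The main obstacle is the balanced supersaturation step with codegrees tuned to $m_2(H)$; weaker codegree control is needed here than for the enumeration applications pursued later in this paper, but it already requires the careful deletion/counting argument of \cite{BMS,ST}, and it is precisely the exponent $2-1/m_2(H)$ in (iii)---traded off against the Chernoff savings $\varepsilon^2 n^2 p$---that fixes the threshold $p\asymp n^{-1/m_2(H)}$ appearing in the statement.
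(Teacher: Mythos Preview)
This theorem is not proved in the paper at all: it is quoted as a background result, attributed to Conlon--Gowers \cite{CG} and Schacht \cite{Schacht}, with the remark that it was subsequently reproved via the container method by Balogh--Morris--Samotij \cite{BMS} and Saxton--Thomason \cite{ST}. There is therefore no ``paper's own proof'' to compare your proposal against.

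That said, your sketch is the correct outline of the container-method re-derivation of \cite{BMS,ST}. A couple of comments. First, the balanced supersaturation you need in the dense regime is cheaper than you suggest: once $e(G)\geq (1-\tfrac{1}{\chi(H)-1}+\varepsilon_0)N$, the removal lemma / supersaturation gives $\Theta(n^{v(H)})$ copies of $H$, and for the \emph{full} family of copies the trivial codegree bound $d(\sigma)\leq O(n^{v(H)-v_\sigma})\leq O(n^{v(H)-2-(|\sigma|-1)/m_2(H)})$ already has the right shape---no iterative pruning is required. Second, your union bound as written produces an extra $\log n$ in the threshold; to hit exactly $p\geq Cn^{-1/m_2(H)}$ one argues as in \cite{BMS,ST} via the fingerprints $T\subset I\subset G(n,p)$ (weighting by $p^{|T|}$) rather than a naive union bound over containers. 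With those two adjustments your plan is the standard one.
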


While Theorem~\ref{random-main} gives a satisfactory answer for non-bipartite $H$, much less is known for bipartite $H$. For $C_{2\ell}$, Haxell, Kohayakawa and {\L}uczak \cite{HKL} showed that if $p\gg n^{-1+1/(2\ell-1)}$ then
$\ex(G(n,p), C_{2\ell})\ll e(G(n,p))$, whereas if $p=o(n^{-1+1/(2\ell-1})$ then $\ex(G(n,p), C_{2\ell})=(1+o(1)) e(G(n,p))$.  For $p=\al n^{-1+1/(2\ell-1)}$ and $2\leq \al\leq n^{1/(2\ell-1)^2}$,
Kohayakawa, Kreuter and Steger \cite{KKS} obtained the tight result that with high probability, 
\[\ex(G(n,p), C_{2\ell})=\Theta \left(n^{1+1/(2\ell-1)} (\log \al)^{1/(2\ell-1)}\right).\]
 For recent work on the analogous problem for linear cycles in random $r$-uniform hypergraphs, see Mubayi and Yepremyan \cite{MY}.

Applying the container method to Theorem~\ref{MS-cycle-supersat}, Morris and Saxton obtained the following.
\begin{theorem} [Morris-Saxton \cite{MS}] \label{MS-cycle-random}
For every $\ell\geq 2$, there exists a constant $C=C(\ell)>0$ such that

\[\ex(G(n,p), C_{2\ell})\leq \begin{cases}  Cn^{1 + 1/(2\ell - 1)}(\log n)^{2} & \text{if } p\leq n^{- (\ell - 1)/(2\ell-1)}  \cdot (\log n)^{2\ell} 
\\
Cp^{1/\ell } n^{1 + 1/\ell} & \text{otherwise}
\end{cases}\]

with high probability as $n\to \infty$.
\end{theorem}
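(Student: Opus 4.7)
The plan is to feed the balanced supersaturation of Theorem~\ref{MS-cycle-supersat} into the hypergraph container method to obtain a small family of containers for $C_{2\ell}$-free graphs, and then to run a Chernoff-plus-union-bound argument on $G(n,p)$.

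First, I would interpret $C_{2\ell}$-free subgraphs of $K_n$ as independent sets in the $2\ell$-uniform hypergraph on vertex set $E(K_n)$ whose hyperedges are the edge sets of copies of $C_{2\ell}$. Theorem~\ref{MS-cycle-supersat} guarantees that any graph with more than $k_0 n^{1+1/\ell}$ edges induces a sub-hypergraph whose codegree profile satisfies the hypothesis of the Saxton--Thomason / Balogh--Morris--Samotij container lemma. Iterate the container reduction, shrinking the edge budget $k$ at each step, until every surviving container has at most $Cn^{1+1/\ell}$ edges. With the fingerprint-size parameter chosen carefully at each iteration, this should produce a family $\mathcal F$ of graph containers with the following two properties: every $C_{2\ell}$-free graph on $[n]$ is a subgraph of some $F\in\mathcal F$ with $|E(F)|\le Cn^{1+1/\ell}$, and $\log|\mathcal F|\le C n^{1+1/(2\ell-1)}(\log n)^2$.

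With $\mathcal F$ in hand, any $C_{2\ell}$-free $H\subseteq G(n,p)$ satisfies $E(H)\subseteq E(F)\cap E(G(n,p))$ for some $F\in\mathcal F$, so
\[
\ex(G(n,p),C_{2\ell})\le \max_{F\in\mathcal F}|E(F)\cap E(G(n,p))|.
\]
For fixed $F$ the right-hand side is a binomial with mean $\mu_F=p|E(F)|\le Cpn^{1+1/\ell}$, so a standard Chernoff estimate gives $\Pr[|E(F)\cap E(G(n,p))|\ge t]\le \exp(-\Omega(t))$ whenever $t\ge 2\mu_F$. Taking $t=C'n^{1+1/(2\ell-1)}(\log n)^2$ in the low-$p$ range $p\le n^{-(\ell-1)/(2\ell-1)}(\log n)^{2\ell}$ and $t=C'p^{1/\ell}n^{1+1/\ell}$ otherwise, one checks by comparing exponents that in each case $t\ge 2\mu_F$ and that $t$ exceeds $\log|\mathcal F|$ by a constant factor (the threshold in $p$ stated in Theorem~\ref{MS-cycle-random} is precisely the value at which these two regimes meet). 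A union bound over $\mathcal F$ then gives failure probability $o(1)$.

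The principal obstacle is the first step: producing a container family whose logarithmic cardinality is of order $n^{1+1/(2\ell-1)}(\log n)^2$, rather than the weaker $n^{1+1/\ell}$ bound that a naive enumeration-style iteration would produce. This is where the fine codegree information in Theorem~\ref{MS-cycle-supersat}(b) is essential: the codegree exponents $|\sigma|+(|\sigma|-1)/(\ell-1)$ let one choose the Saxton--Thomason threshold at each iteration so that the fingerprint added is a factor of roughly $1/\mathrm{polylog}(n)$ smaller than the current edge budget, and iterating $O(\log n)$ times brings the budget below the extremal threshold while keeping the total fingerprint size within the claimed bound. The exponent $1/(2\ell-1)$ emerges exactly from balancing these supersaturation exponents against the stopping rule $k\le k_0$; once this bookkeeping is done, the rest of the proof is the routine concentration argument above.
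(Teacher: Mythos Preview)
Your plan has a genuine gap in the first step: you cannot produce a container family $\mathcal F$ with \emph{both} $|E(F)|\le Cn^{1+1/\ell}$ for every $F\in\mathcal F$ \emph{and} $\log|\mathcal F|\le Cn^{1+1/(2\ell-1)}(\log n)^2$. Iterating the container lemma all the way down to the extremal threshold (i.e.\ until $k\approx k_0$) forces the total fingerprint size to be of order $n^{1+1/\ell}$: once $k_j$ is bounded the per-step fingerprint has size $\Theta(k_j^{1-\lambda^*}n^{1+1/\ell})=\Theta(n^{1+1/\ell})$, and this last step alone already contributes that much. Hence $\log|\mathcal F|=\Theta(n^{1+1/\ell})$, not $n^{1+1/(2\ell-1)}(\log n)^2$. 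With that size of $\mathcal F$ your union bound collapses in the interesting range: for $p=n^{-\eps}$ with $\eps>0$ small, your target $t=C'p^{1/\ell}n^{1+1/\ell}=C'n^{1+1/\ell-\eps/\ell}$ is strictly smaller than $\log|\mathcal F|$, so the $\exp(-\Omega(t))$ Chernoff bound cannot beat $|\mathcal F|$.

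What the paper (and Morris--Saxton originally) actually does is different in two ways. First, the iteration is stopped at a $p$-dependent level $k=p^{-(\ell-1)/\ell}$ rather than at $k_0$, so containers have size $kn^{1+1/\ell}$, not $Cn^{1+1/\ell}$. Second, and crucially, the argument does \emph{not} use a plain union bound over containers. Instead it exploits the fingerprint structure $g(I)\subset I\subset h(g(I))\cup g(I)$: since $I\subset G(n,p)$ forces $g(I)\subset G(n,p)$, every fingerprint $S$ of size $s$ contributes an extra factor $p^{s}$ to the probability, and the event becomes ``$S\subset G(n,p)$ and $G(n,p)$ meets $h(S)$ in at least $m-s$ edges''. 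Summing $p^m\binom{kn^\alpha}{m-s}$ against the refined count of fingerprints of each size $s$ (part~(a) of Theorem~\ref{container-ES}) is what makes the calculation close. The $p^s$ factor precisely absorbs the $(Cn^\alpha/s)^{\lambda^*s/(\lambda^*-1)}$ term that would otherwise blow up the union bound; without it you are stuck with the $n^{1+1/\ell}$ barrier.
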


A well-known conjecture of Erd\H{o}s and Simonovits (see \cite{FS-survey}) states that $\ex(n,\{C_3,C_4,\dots, C_{2\ell}\})=\Theta(n^{1+1/\ell})$.
Morris and Saxton \cite{MS} further showed that with high probability as $n\to\infty$, $\ex(n,G(n,p))=\Omega(p^{1/\ell}n^{1+1/\ell})$
for each $\ell$ for which the Erd\H{o}s-Simonovits conjecture is true. The successful applications of Theorem~\ref{MS-cycle-supersat} 
to both the enumeration problem and the random Tur\'an problem on $C_{2\ell}$ motivated
Morris and Saxton \cite{MS} to make a general conjecture about all bipartite graphs.

\begin{conjecture} [Morris-Saxton \cite{MS}] \label{MS-conjecture}
Given a bipartite graph $H$ containing a cycle,\footnote{While the phrase "containing a cycle" does not appear in Morris-Saxton's conjecture, context makes it clear that it was intended. It is easy to see the conjecture is false for forests.} there exist constants $C>0, \eps>0$ and $k_0\in \mathbb{N}$ such that the following
holds. Let $k\geq k_0$, and suppose that $G$ is a graph on $n$ vertices with $k\cdot \ex(n,H)$ edges. Then there exists
a (non-empty) collection\footnote{For such a collection to exist, it is neccessary that $|\cH| \geq C^{-1} k^{(1 + \varepsilon)(e(H) - 1)}e(G)$, as can be seen by letting $\sigma$ be a member of $\cH$.} $\cH$ of copies of $H$ in $G$, satisfying
\[d_\cH(\sigma)\leq \frac{C\cdot |\cH|}{k^{(1+\eps)(|\sigma|-1)} e(G)} \mbox{ for every } \sigma\subset E(G)
\mbox{ with } 1\leq |\sigma| \leq e(H).\]
\end{conjecture}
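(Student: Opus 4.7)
The plan is to follow the overall strategy of Morris and Saxton in their proof of \Cref{MS-cycle-supersat}, replacing the cycle-specific combinatorial analysis with more flexible tools that exploit only the mild hypothesis on $H$. First I would apply a standard regularization step to pass to a subgraph $G' \subseteq G$ on $n'\le n$ vertices in which every vertex has degree $\Theta(d)$ for $d \approx k\cdot \ex(n,H)/n$, at the cost of at most a polylogarithmic factor in edges. This reduces the problem to an almost-regular setting where counting embeddings becomes tractable.

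Next I would build an initial large collection $\cH_0$ of copies of $H$ in $G'$ via supersaturation: since $e(G')$ sits well above $\ex(n',H)$, a standard Erd\H{o}s--Simonovits-type argument produces at least $\Omega(k^{e(H)})$ times the baseline number of copies one expects in a near-extremal graph. The central technical step is then to control extension counts: for every proper subgraph $F\subsetneq H$ and every embedding $\phi$ of $F$ into $G'$, one needs an upper bound on the number of extensions of $\phi$ to a copy of $H$ that improves on the ``typical'' value by a factor of $k^{-\eps(|E(F)|-1)}$ after averaging over embeddings of $F$. This is precisely where the mild assumption on $H$ --- roughly, that $\ex(n,H)$ behaves as a regular power of $n$ and that this behaviour is inherited by appropriate subgraphs --- becomes decisive.

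Given such an extension estimate, I would obtain $\cH$ from $\cH_0$ by a random cleaning argument: keep each copy of $H$ independently with a carefully chosen probability $q$, then discard every remaining copy that contains some $\sigma$ whose degree in the sample exceeds the target bound $C\cdot q|\cH_0|/(k^{(1+\eps)(|\sigma|-1)} e(G))$. A Chernoff and union-bound calculation tuned to the extension estimate should show that, with positive probability, only a small fraction of copies are discarded, so the surviving family both is large and satisfies the stated balance inequality for all $\sigma$ simultaneously.

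The main obstacle will be extracting the genuine $(1+\eps)$ exponent in $k^{(1+\eps)(|\sigma|-1)}$, rather than the flat $k^{|\sigma|-1}$ already delivered by the weaker Ferber--McKinley--Samotij bound \cite{FMS}. A naive average of edge-degrees in $\cH_0$ only produces the latter; gaining the extra $k^{\eps(|\sigma|-1)}$ factor requires genuinely exploiting that $G'$ lies strictly above the extremal threshold, so that the ``excess'' $(k-1)\ex(n,H)$ edges contribute many additional copies of $H$ that avoid any prescribed small set $\sigma$. Propagating this gain uniformly over all $\sigma$, via an induction on the subgraphs $F\subseteq H$, is the delicate step where the mild hypothesis on $H$ must carry most of the load, and I expect it to be the technical heart of the proof.
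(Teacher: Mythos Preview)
Your proposal takes a genuinely different route from the paper, and the crucial step you flag as ``the main obstacle'' is in fact where the approach breaks down as written.

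The paper does \emph{not} build a large $\cH_0$ first and then clean it by random sampling of copies. Instead it builds $\F$ iteratively: starting from $\F=\emptyset$, it repeatedly adds a single \emph{good} copy of $H$ (one containing no already-saturated set $S$) until $|\F|$ reaches the target $N$. The whole argument reduces to showing that, at every stage, a good copy exists. For this the paper samples a random \emph{vertex} subset $W$ of size $np$ with $p\approx k^{-1/(r-\alpha)}$, and observes that any saturated $i$-set $S$ spans at least $r+(i-1)/m^*_r(H)$ vertices (by definition of the proper $r$-density), so $\Pb[S\subseteq W]\le p^{r+(i-1)/m^*_r(H)}$. Balancing this against the count of saturated sets from a double-counting bound shows that, after deleting one edge from each saturated set inside $W$, the remaining graph on $W$ still has $\Omega(e(G)p^r)$ edges---enough to invoke the basic supersaturation lemma and produce many copies of $H$, all of which are good. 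No regularization, no extension counts for proper subgraphs, and no inheritance hypothesis on $\ex(n,F)$ for $F\subsetneq H$ are needed.

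By contrast, your plan requires a uniform extension bound: for every $F\subsetneq H$ and every embedding $\phi$ of $F$, control the number of extensions to a full copy of $H$ with a gain of $k^{-\eps(|E(F)|-1)}$. You correctly identify this as the heart of the matter, but the mild assumption actually in play is only $\ex(n,H)\le An^\alpha$ with $\alpha>r-1/m_r(H)$; there is no assumption that this behaviour is ``inherited by appropriate subgraphs'', and without such inheritance the induction over $F\subseteq H$ you sketch has no input to run on. The random-cleaning step (keep each copy with probability $q$, then discard copies containing a heavy $\sigma$) also tends to lose exactly the $k^{\eps(|\sigma|-1)}$ factor you are after, because the union bound over all $\sigma$ of each size forces $q$ to be small. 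The paper's vertex-sampling trick circumvents both issues simultaneously: it converts the codegree condition into a statement about how many \emph{vertices} a bad $\sigma$ spans, and that is governed purely by $m^*_r(H)$, with no reference to extremal numbers of subgraphs of $H$.
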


An important aspect of this conjecture is that  its truth would immediately yield desired enumeration results on $H$-free graphs, 
in the following sense.
\begin{proposition} [Morris-Saxton \cite{MS}] \label{MS-proposition}
Let $H$ be a bipartite graph. If Conjecture \ref{MS-conjecture} holds for $H$, then there are at most $2^{O(\ex(n,H))}$ $H$-free 
graphs on $n$ vertices.
\end{proposition}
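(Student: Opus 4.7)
The approach is the standard iterative application of the hypergraph container method, with Conjecture \ref{MS-conjecture} supplying the co-degree hypotheses at each stage. Let $r = e(H)$, and fix the constants $C, \eps, k_0$ afforded by Conjecture \ref{MS-conjecture}. I shall build a sequence of families of graphs $\cG_0, \cG_1, \dots, \cG_T$ on $[n]$ such that every $H$-free graph on $[n]$ is a subgraph of some element of each $\cG_i$, each iteration reduces the edge count of every ``large'' container by a constant factor, and the iteration terminates when every graph in $\cG_T$ has at most $k_0 \cdot \ex(n,H)$ edges.

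Starting from $\cG_0 = \{K_n\}$, consider any $G \in \cG_i$ with $e(G) \geq k_0 \cdot \ex(n,H)$, and set $k = e(G)/\ex(n,H)$. Conjecture \ref{MS-conjecture} produces a collection $\cH(G)$ of copies of $H$ in $G$ with
\[
\Delta_j(\cH(G)) \;\leq\; \frac{C\,|\cH(G)|}{k^{(1+\eps)(j-1)}\, e(G)} \qquad\text{for every } 1 \leq j \leq r.
\]
Viewing $\cH(G)$ as an $r$-uniform hypergraph on the vertex set $E(G)$, these are precisely the co-degree hypotheses needed to apply the Balogh--Morris--Samotij / Saxton--Thomason container theorem with token $\tau \asymp k^{-(1+\eps)}$. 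The theorem yields a family $\cC(G)$ of subgraphs of $G$ such that every $H$-free subgraph of $G$ lies inside some $G' \in \cC(G)$, each container satisfies $e(G') \leq (1-\delta)\, e(G)$ for an absolute $\delta > 0$, and
\[
\log|\cC(G)| \;=\; O\bigl(\tau\, e(G)\,\log(1/\tau)\bigr) \;=\; O\bigl(k^{-\eps}\,\ex(n,H)\,\log k\bigr).
\]
I let $\cG_{i+1}$ be the union of $\cC(G)$ over all large $G \in \cG_i$, retaining the already small graphs unchanged; after $T = O(\log n)$ rounds every member of $\cG_T$ has at most $k_0 \cdot \ex(n,H)$ edges.

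The main (if mild) obstacle is the bookkeeping that bounds $\log|\cG_T|$ across all iterations. The parameters $k_i = e(G_i)/\ex(n,H)$ shrink geometrically from $k^{\mathrm{start}} = \binom{n}{2}/\ex(n,H)$ down to $k_0$, so writing $s_i = \log k_i \approx s_0 - i\,\delta'$ with $s_0 = O(\log n)$, a change of variables bounds the total contribution by
\[
\log|\cG_T| \;\leq\; O\!\left(\ex(n,H)\sum_{i \geq 0} e^{-\eps s_i}\, s_i\right) \;\leq\; O\!\left(\ex(n,H)\int_{0}^{\infty} e^{-\eps s}\, s\, ds\right) \;=\; O_{\eps}(\ex(n,H)).
\]
The integral converges precisely because $\eps > 0$; this is where the extra $\eps$ in the exponent $(1+\eps)(|\sigma|-1)$ of Conjecture \ref{MS-conjecture} is indispensable, since with only the bare exponent $|\sigma|-1$ the summation would accumulate a spurious factor of $\log n$ and fall short of the desired bound. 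Finally, each member of $\cG_T$ has at most $k_0\,\ex(n,H) = O(\ex(n,H))$ edges and therefore admits at most $2^{O(\ex(n,H))}$ subgraphs; multiplying by $|\cG_T| \leq 2^{O_{\eps}(\ex(n,H))}$ yields the claimed bound of $2^{O(\ex(n,H))}$ $H$-free graphs on $[n]$.
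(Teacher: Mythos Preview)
Your proposal is correct and follows exactly the standard Morris--Saxton container iteration; the paper does not supply its own proof of this proposition but simply cites \cite{MS}, and the framework it does develop in Section~\ref{sec:random-Turan} (Proposition~\ref{container-prop} and Theorem~\ref{container-main}) is precisely the same iterative scheme you describe, refined with fingerprints to also yield random Tur\'an bounds.
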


The work of Morris and Saxton and Conjecture~\ref{MS-conjecture} generated a lot of interest in the field.
In a recent breakthrough, Ferber, McKinley and Samotij \cite{FMS} were able to establish a weaker version of Conjecture~\ref{MS-conjecture}
and applied it to obtain very general enumeration results on $H$-free hypergraphs for all $r$-partite $r$-uniform hypergraphs ($r$-graphs in short) $H$ that satisfy a very mild assumption which is widely believed to hold for all $r$-partite $r$-graphs.
The weaker version of Conjecture~\ref{MS-conjecture} that Ferber, McKinley and Samotij~\cite{FMS} proved is a bit technical to
state here and does not seem to immediately apply to the random Tur\'an problem.
However, the enumeration results it yields are very general and significant.

\begin{definition}[$r$-density and proper $r$-density] \label{r-density-definition}
Let $r\geq 2$ be an integer. Given an $r$-graph $H$ with $v(H)\geq r+1$, we define its {\it $r$-density} to be
\[m_r(H)=\max\{\frac{e(F)-1}{v(F)-r}: F\subseteq H, v(F)>r\}.\]
	We define its {\it proper $r$-density} to be
\[m^*_r(H)=\max\{\frac{e(F)-1}{v(F)-r}: F\subsetneq H, v(F)>r\}.\]
If $m_r(H)>m^*_r(H)$, we say that $H$ is {\it strictly $r$-balanced}.
\end{definition} 
For instance, for the even cycle $C_{2\ell}$, we have $m_2(C_{2\ell})=\frac{2\ell-1}{2\ell-2}$ and
$m^*_2(C_{2\ell})=1$. Throughout the paper, we will tacitly assume that $H$ has at least two edges.
The main results of Ferber, McKinley and Samotij are 
\begin{theorem} [Ferber-McKinley-Samotij \cite{FMS}] \label{FMS-main1}
Let $H$ be an $r$-uniform hypergraph and let $\al$ and $A$ be positive constants. Suppose that 
$\al>r-\frac{1}{m_r(H)}$ and that $\ex(n,H)\leq An^\al$ for all $n$. Then there exists a constant $C$ depending 
only on $\al, A$, and $H$ such that for all $n$, there are at most $2^{Cn^\al}$ $H$-free $r$-uniform hypergraphs on $n$ vertices.
\end{theorem}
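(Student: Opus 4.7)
The plan is to prove the theorem by combining a balanced supersaturation result for $H$ with the hypergraph container method, in the spirit of Morris--Saxton's treatment of $C_{2\ell}$. The strategy has three stages: (i) establish that every $r$-graph $G$ on $n$ vertices with substantially more than $n^\al$ edges contains a ``spread out'' collection of copies of $H$; (ii) feed this into a standard container iteration to cover all $H$-free $r$-graphs on $[n]$ by a small family of containers, each with only $O(n^\al)$ edges; (iii) conclude the enumeration bound by summing $2^{e(C)}$ over all containers $C$.

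The crucial step is the balanced supersaturation. Concretely, I would aim to prove that there exist constants $K_0,\delta,\eps>0$, depending only on $H$, $\al$, and $A$, such that for every $K\geq K_0$ and every $r$-graph $G$ on $n$ vertices with $e(G)\geq K n^\al$, there is a hypergraph $\cH$ on vertex set $E(G)$ whose edges are copies of $H$ in $G$, satisfying $|\cH|\geq \delta K^{e(H)} e(G)$ and a codegree bound of roughly $d_\cH(\sigma)\leq K^{-(1+\eps)(|\sigma|-1)}|\cH|/e(G)$ for every $\sigma\subseteq E(G)$ with $1\leq |\sigma|\leq e(H)$ (or a sufficient technical weakening thereof). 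The hypothesis $\al>r-1/m_r(H)$ is exactly what ensures that for every subgraph $F\subseteq H$ with $v(F)>r$, the expected number of copies of $F$ in a random subset of $E(G)$ outpaces the typical codegree contributions, so that a probabilistic or greedy construction yields an $\cH$ with the required balance. One natural implementation is to sample copies of $H$ with probability proportional to a power of the edge density and then prune copies that lie in overly clustered subconfigurations.

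Given such a supersaturation result, I would apply the hypergraph container theorem of Balogh--Morris--Samotij (or Saxton--Thomason) to the auxiliary $e(H)$-uniform hypergraph $\cH$ on vertex set $\binom{[n]}{r}$. A standard iteration of the container step yields a family $\C$ of subgraphs of $K_n^{(r)}$ such that every $H$-free $r$-graph on $[n]$ is a subgraph of some $C\in\C$, each $C$ satisfies $e(C)\leq 2An^\al$, and $\log|\C|=O(n^\al)$. The total count is then at most $\sum_{C\in\C}2^{e(C)}\leq 2^{O(n^\al)}$, as desired.

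The main obstacle is unambiguously the balanced supersaturation for an arbitrary $r$-partite $r$-graph $H$ satisfying only the mild condition on $\al$. Morris--Saxton's proof for $C_{2\ell}$ is highly tailored: every proper subgraph of an even cycle is a union of paths, which has tractable density and codegree behavior. For general $H$ one must simultaneously control the codegrees of every subset $\sigma$ of edges, and these interact with every proper subgraph $F\subseteq H$ through intricate combinatorial identities that need not behave uniformly. Introducing the proper $r$-density $m^*_r(H)$ and constructing $\cH$ by an inductive procedure that ``peels off'' overly clustered copies is what drives the Ferber--McKinley--Samotij approach and yields a version of supersaturation which, while strictly weaker than Conjecture~\ref{MS-conjecture}, is still strong enough to feed into the container iteration.
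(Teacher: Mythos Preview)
Your plan is the same as the paper's at the structural level: establish a balanced supersaturation statement for $H$ under the hypothesis $\al>r-1/m_r(H)$, then plug it into the container machinery (Proposition~\ref{MS-proposition}) to get the $2^{O(n^\al)}$ count. Where the paper differs from your sketch is in how the supersaturation is actually produced. You suggest sampling copies of $H$ and pruning clustered ones, or invoking the FMS ``peeling'' procedure; the paper instead first proves a crude count (Lemma~\ref{basic-supersaturation}) by passing to a random \emph{vertex} subset of carefully chosen density, and then in the Key Lemma (Lemma~\ref{key-lemma}) builds the balanced family $\cH$ \emph{greedily}, adding one copy at a time while avoiding all currently saturated edge-sets. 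The existence of a good copy at each step is certified by another random-vertex-subset argument: in the sampled subgraph one deletes an edge from every saturated set that survives, and the condition $\al>r-1/m_r(H)$ together with the definition of $m^*_r(H)$ is exactly what makes the expected loss negligible. This yields the full Morris--Saxton form $d_\cH(\sigma)\leq C k^{-\lam(\al,H)(|\sigma|-1)}|\cH|/e(G)$ with $\lam(\al,H)=\frac{1}{m_r(H)(r-\al)}>1$, which is stronger than the FMS version and from which the enumeration corollary follows immediately. Your target count $|\cH|\geq \delta K^{e(H)}e(G)$ is more than what the paper obtains or needs here; the weaker density $|\cH|\geq \delta_H k^{(h-r)/(r-\al)}e(G)$ from Lemma~\ref{basic-supersaturation} already suffices.
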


\begin{theorem} [Ferber-McKinley-Samotij \cite{FMS}] \label{FMS-main}
Let $H$ be an $r$-uniform hypergraph and assume that $\ex(n,H)\geq \eps n^{r-\frac{1}{m_r(H)}+\eps}$ for some $\eps>0$ and all $n$.
Then there exists a constant $C$ depending 
only on $\varepsilon$ and $H$ such that for all $n$, there are at most $2^{C\ex(n, H)}$ $H$-free $r$-uniform hypergraphs on $n$ vertices.
\end{theorem}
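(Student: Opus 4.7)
The plan is to combine a suitable balanced supersaturation statement for $H$ with the hypergraph container lemma of Balogh-Morris-Samotij / Saxton-Thomason. The hypothesis $\ex(n,H)\ge \eps\, n^{r-1/m_r(H)+\eps}$ is crucial, because it provides a polynomial slack of $n^\eps$ over the natural density threshold $n^{r-1/m_r(H)}$; this slack is precisely what allows one to control co-degrees of the copies of $H$ that witness supersaturation.

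First I would establish a balanced supersaturation statement of the following flavour. For $K$ a sufficiently large constant and any $n$-vertex $r$-graph $G$ with $e(G)\ge K\,\ex(n,H)$, there exists a collection $\cH$ of copies of $H$ in $G$ satisfying
\[
|\cH|\ge c\,K^{e(H)}\,e(G) \qquad \text{and} \qquad d_\cH(\sigma)\le \frac{C\,|\cH|}{K^{(1+\eta)(|\sigma|-1)}\,e(G)}
\]
for every $\sigma\subseteq E(G)$ with $1\le|\sigma|\le e(H)$, where $\eta=\eta(\eps,H)>0$. The construction is probabilistic: one samples subhypergraphs of $G$ at densities calibrated by $m_r(H)$ so that the expected count of copies of $H$ through any fixed $\sigma$ scales exactly as the desired co-degree bound predicts, and then deletes those copies passing through a set $\sigma$ that witnesses a violation. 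The excess $n^\eps$ in the hypothesis forces the total weight of violating $\sigma$'s to be negligible compared with $|\cH|$, so a constant fraction of $\cH$ survives the clean-up.

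This first step is the main obstacle. Securing the co-degree bound simultaneously for every $\sigma$ of every size $1\le|\sigma|\le e(H)$ requires handling subhypergraphs $F\subseteq H$ of every density, and it is here that the definition of $m_r(H)$ as a maximum over \emph{all} subhypergraphs (rather than just $H$ itself) is indispensable. A single round of naive sampling cannot achieve the bound for all $\sigma$ at once; one needs either an iterative refinement that successively purges copies through "heavy" $\sigma$, or an adaptive weighting of the copies, along the lines carried out by Ferber-McKinley-Samotij. A secondary technical issue is ensuring that $|\cH|$ still grows like a large power of $K$ times $e(G)$ after the clean-up, which is needed downstream for the container-counting bookkeeping.

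Once the balanced supersaturation is in place, the remainder follows a standard template. I would apply the hypergraph container lemma to the auxiliary $e(H)$-uniform hypergraph on vertex set $\binom{[n]}{r}$ whose hyperedges are the members of $\cH$, iterating the lemma until the container sizes are driven down to $(1+o(1))\,\ex(n,H)$. The co-degree estimates above translate directly into the co-degree function hypotheses of the lemma, and the iteration produces a family $\cC$ of containers such that every $H$-free $r$-graph on $[n]$ is a subhypergraph of some $C\in\cC$, with $|E(C)|\le (1+o(1))\,\ex(n,H)$ and $\log|\cC|=O(\ex(n,H))$. Counting then yields
\[
\#\{H\text{-free }r\text{-graphs on }[n]\}\le \sum_{C\in\cC}2^{|E(C)|}\le 2^{O(\ex(n,H))},
\]
as required.
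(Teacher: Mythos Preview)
Your outline is correct and matches the standard route---balanced supersaturation followed by an iterated application of the hypergraph container lemma---which is both the original Ferber--McKinley--Samotij argument and the strategy the present paper uses for its own results. Note, though, that the paper does not itself give a proof of this particular statement: it is quoted as a theorem of \cite{FMS}, and the paper only remarks (without details) that its Theorem~\ref{main1} ``allows one to retrieve'' it, spelling out the container step only for the companion Theorem~\ref{FMS-main1} in the Corollary following Theorem~\ref{balanced-supersat1}.

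One point of comparison worth flagging: you phrase the supersaturation hypothesis as $e(G)\ge K\,\ex(n,H)$, which is the FMS formulation, whereas the paper's Theorem~\ref{main1} and Lemma~\ref{key-lemma} are stated in terms of $e(G)=kn^\alpha$ under an assumption $\ex(n,H)\le An^\alpha$ with $\alpha>r-1/m_r(H)$. The container argument in that language yields $2^{O(n^\alpha)}$ rather than $2^{O(\ex(n,H))}$, and converting between the two genuinely uses the lower-bound hypothesis $\ex(n,H)\ge \eps\, n^{r-1/m_r(H)+\eps}$. You have implicitly absorbed this into your supersaturation statement, which is fine, but it is the one place where the hypothesis of the theorem is actually consumed and should be made explicit in a full write-up.
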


To get a sense of where the condition on $H$ in Theorem~\ref{FMS-main} came from, observe that a simple
first moment argument shows that for any $r$-uniform hypergraph $H$, $\ex(n,H)\geq \Omega(n^{r-1/m_r(H)})$ holds.
In the $2$-uniform case, it is widely believed that this simple probabilistic lower bound is not asymptotically tight for any $H$ that contains a cycle.
The $r\geq 3$ case is expected to be similar.
Ferber, McKinley and Samotij made the following conjecture.

\begin{conjecture}[Ferber-McKinley-Samotij \cite{FMS}] \label{FMS-conjecture}
Let $H$ be an arbitrary graph that is not a forest. There exists an $\eps>0$ such that
$\ex( n,H)\geq \eps n^{2-1/m_2(H)+\eps}$.
\end{conjecture}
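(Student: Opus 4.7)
The plan is to combine a random algebraic construction with the deletion method to beat the trivial probabilistic lower bound $\ex(n,H)=\Omega(n^{2-1/m_2(H)})$ by an $n^\eps$ factor. Write $m=m_2(H)$ and fix $F\subseteq H$ achieving $m_2(H)=(e(F)-1)/(v(F)-2)$. The baseline bound arises from alteration on $G(n,p)$ with $p=n^{-1/m}$: the expected edge count is $\Theta(n^{2-1/m})$ and the expected number of copies of $F$ is of the same order, so deleting one edge per copy of $F$ still leaves $\Omega(n^{2-1/m})$ edges. Any improvement must come from a construction whose edge count exceeds this probabilistic expectation while still admitting tight control on copies of $H$ and its proper subgraphs.

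First, following Bukh--Conlon~\cite{BC}, I would fix a prime power $q$ and dimension $d$ with $q^d\asymp n$, pick a random polynomial $f\in\mathbb{F}_q[x_1,\ldots,x_d,y_1,\ldots,y_d]$ of large but bounded degree $s=s(H)$, and form the bipartite graph $G$ on $\mathbb{F}_q^d\times\mathbb{F}_q^d$ with edge set $\{(x,y):f(x,y)=0\}$. Via the Lang--Weil estimates and concentration for random $f$ of high enough degree, one obtains tight joint codegree bounds for $k$-tuples for all $k\leq s$, and hence for the number of copies of every proper subgraph $F'\subsetneq H$. One then chooses $d$ so that $n^{2-1/d}$ exceeds $n^{2-1/m}$ by a polynomial factor; if the count of copies of $H$ in $G$ can be shown to be $O(e(G))$, deleting one edge per copy yields an $H$-free subgraph of size $\Omega(n^{2-1/m+\eps})$. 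When the $H$-copies are only moderately numerous but suitably balanced, they can alternatively be removed by a container-type argument in the spirit of the balanced supersaturation machinery that is the main theme of this paper.

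The hard step, and the reason Conjecture~\ref{FMS-conjecture} remains open in full generality, is controlling the number of copies of $H$ itself. Bukh--Conlon sidestep this difficulty by forbidding an entire family $\mathcal F$ of bouquet-like graphs rather than a single graph, which lets them extract the crucial dimension drop from one combinatorial configuration; for a single $H$ the random polynomial $f$ imposes no such automatic restriction, and all proper subgraphs of $H$ may appear in abundance while still combining into too many copies of $H$. A natural route is to leverage the maximality of $F$ in the definition of $m_2(H)$: once one has greedily embedded an appropriately chosen copy of $F$, the equations governing the remaining vertices of $H$ should be over-determined and force a dimension drop that suppresses $H$-copies. Turning this heuristic into a rigorous argument for arbitrary non-forest $H$ appears to require substantively new input beyond the current random algebraic toolkit, possibly combined with dependent random choice to first pass to a vertex set on which the joint codegrees are perfectly generic.
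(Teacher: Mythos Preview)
The statement you are attempting to prove is Conjecture~\ref{FMS-conjecture}, which the paper presents as an \emph{open conjecture} of Ferber, McKinley and Samotij; the paper does not prove it and does not claim to. The surrounding discussion only records supporting evidence (Bohman--Keevash for strictly $2$-balanced $H$, Bukh--Conlon for certain families, and the conditional implication via the Erd\H{o}s--Simonovits rational exponents conjecture). So there is no ``paper's own proof'' to compare against.

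Your proposal is not a proof either, and you effectively say so yourself. The random algebraic construction of Bukh--Conlon controls codegrees and hence counts of proper subgraphs $F'\subsetneq H$, but the decisive step --- bounding the number of copies of $H$ itself by $O(e(G))$ so that deletion leaves a positive fraction of edges --- is precisely what you leave unresolved. The heuristic that the maximizer $F$ in $m_2(H)$ forces an ``over-determined'' system for the remaining vertices is not justified: nothing in the random polynomial model guarantees a dimension drop once a copy of $F$ is fixed, and for generic $H$ the extensions of $F$ to $H$ may live on varieties of full expected dimension. This is exactly why Bukh--Conlon forbid a family rather than a single graph. Your fallback suggestion of removing $H$-copies via ``a container-type argument in the spirit of balanced supersaturation'' is circular here: balanced supersaturation gives upper bounds on $\ex(n,H)$ and counts of $H$-free graphs, not lower-bound constructions, and it presupposes the very density regime you are trying to establish. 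As written, the proposal is a plausible research outline with a clearly identified gap, not a proof.
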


Conjecture~\ref{FMS-conjecture} is known to hold for quite a few families of bipartite graphs,
including complete bipartite graphs, even cycles, the cube graph, and etc (see
\cite{FS-survey} and \cite{FMS} for some discussions). In particular, the work of Bukh and Conlon \cite{BC} on 
the Tur\'an exponent of a bipartite family provides a large
family of bipartite graphs $H$ for which Conjecture \ref{FMS-conjecture} holds, namely graphs $H$ that
are obtained by gluing enough copies of a so-called balanced tree at the leaves. There is also other strong
evidence that Conjecture~\ref{FMS-conjecture} should be true. Bohman and Keevash
\cite{BK} showed that if $H$ is a bipartite graph that is strictly $2$-balanced (see Defintion~\ref{r-density-definition})  then
$\ex(n,H)\geq \Omega(n^{2-\frac{1}{m_2(H)}} (\log n)^{1/(e(H)-1)})$. Bennett and Bohman \cite{BB} later generalized this result for hypergraphs (See also  \cite{FMS} for
another generalization). On the other hand, a well-known conjecture of Erd\H{o}s and Simonovits
\cite{erdos} says that for any bipartite graph $H$, there exist constants $\al\in [1,2), c_1,c_2>0$ such that
$c_1n^\al\leq \ex(n,H)\leq c_2n^\al$. Hence, if the Erd\H{o}s-Simonovits conjecture were true, then the result of Bohman and Keevash would imply Conjecture~\ref{FMS-conjecture}.

\section{Main results}

As our main result of the paper, we prove Conjecture~\ref{MS-conjecture} of Morris and Saxton 
in a more explicit form under the same mild condition about $H$ assumed by Ferber, McKinley and Samotij \cite{FMS}. 

\begin{theorem}[Balanced Supersaturation] \label{main1}
Let $r$ be an integer with $r\geq 2$. Let $H$ be
an $r$-partite $r$-graph with $h$ vertices and $\ell$ edges.
Let $\al$ and $A$ be positive reals satisfying that $A\geq r^{2r}$, $\alpha>r-\frac{1}{m_r(H)}$ and that $\ex(n,H)\leq An^\al$ for all $n$,
There exist constants $k_0, C>0$ such that the following holds.
Let $G$ be an $n$-vertex $r$-graph with $m= kn^\alpha$ edges where $k\geq k_0$.
Then $G$ contains a non-empty family $\cH$ of copies of $H$ such that
, \[d_\cH(S)\leq C k^{-\lam(\al, H)(|S|-1)}\frac{|\cH|}{e(G)}, 
\mbox{ for every } S\subseteq E(G), 1\leq |S|\leq e(H),\]
where 
$\lam(\al, H)=\frac{1}{m_r(H)(r-\al)}$.
\end{theorem}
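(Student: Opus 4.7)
My plan is to follow the Morris--Saxton framework, as refined by Ferber--McKinley--Samotij, but to push the codegree exponents to the form conjectured by Morris and Saxton. The strategy has three phases: cleaning the host hypergraph, defining the family $\cH$, and verifying the codegree bounds. For Phase~1 (iterative cleaning), starting from $G$ with $m=kn^{\al}$ edges, I iteratively prune configurations that extend to too many partial copies of $H$. For each connected sub-$r$-graph $F\subseteq H$ with $v(F)>r$ and each $T\subseteq E(F)$, I choose a threshold $\tau_{F,T}$ in terms of $m_r(H)$, $\al$, and $k$, and delete any copy of $T$ in $G$ whose number of extensions to a copy of $F$ exceeds $\tau_{F,T}$. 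The hypothesis $\al>r-1/m_r(H)$, combined with the extremal bound $\ex(n,H)\leq An^{\al}$, ensures via a double-counting argument that the cumulative deletions amount to at most $m/2$, leaving a subhypergraph $G'$ with at least $m/2$ edges whose extension counts are uniformly bounded for every $(F,T)$.

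For Phase~2 I let $\cH$ be the collection of all copies of $H$ in $G'$. Since $e(G')\gg\ex(n,H)$ once $k\geq k_0$, a supersaturation argument (obtained for instance by iteratively deleting $H$-free subgraphs until $G'$ is exhausted) yields $|\cH|\geq c\cdot k^{\lam(e(H)-1)+1}n^{\al}$ for some $c>0$, which is the lower bound on $|\cH|/e(G)$ needed to make the codegree bound nontrivial at $|S|=e(H)$. Phase~3 verifies the codegree bounds: for $S\subseteq E(G)$ with $1\leq|S|\leq e(H)$, enumerate the finitely many possible images $T\subseteq E(H)$ that $S$ can take in a copy of $H$; for each such $T$ and the induced $F\subseteq H$, the number of copies of $H$ in $G'$ containing $S$ is at most $\tau_{F,T}$ by the Phase~1 cleaning. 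Substituting the chosen thresholds then yields $d_\cH(S)\leq Ck^{-\lam(|S|-1)}|\cH|/e(G)$, as required.

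The main obstacle is calibrating $\{\tau_{F,T}\}$ simultaneously across all $F$ and $T$. The parameter $m_r(H)$ is the correct one because the subhypergraph attaining its maximum is the bottleneck, both for the cleaning budget and for the resulting codegree exponent. The condition $\al>r-1/m_r(H)$ is precisely what is needed for each $\tau_{F,T}$ to be (i) small enough to yield the optimal $\lam=1/(m_r(H)(r-\al))$ and (ii) large enough that the cumulative deletions remain below $m/2$. Achieving the exact form of the Morris--Saxton conjecture, rather than the weaker Ferber--McKinley--Samotij form, demands tightening this balance and tracking the exponents across all subhypergraphs $F\subseteq H$ carefully, and I expect most of the technical effort will concentrate here.
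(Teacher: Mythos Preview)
Your route is genuinely different from the paper's. The paper never cleans the host hypergraph; instead it constructs $\cH$ greedily, one copy of $H$ at a time. A set $S\subseteq E(G)$ is declared \emph{saturated} once $d_\cH(S)$ reaches half its target, and a copy of $H$ is \emph{good} if it contains no saturated set. The core step is to show that while $|\cH|$ is below a target $N$, a good copy exists. This is done by random sampling: for a carefully chosen $p$, a uniform random $W\subseteq V(G)$ of size $np$ has $\E[e(G[W])]\geq\tfrac12 e(G)p^{r}$ but the expected number of saturated sets falling inside $W$ is at most $\tfrac14 e(G)p^{r}$; deleting one edge from each such set leaves a subgraph on $W$ with more than $A|W|^{\al}$ edges and hence a copy of $H$, which is automatically good. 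The target $N$ itself comes from a separate supersaturation lemma---also proved by random sampling---showing that any $n$-vertex $r$-graph with $kn^{\al}$ edges already contains at least $c_H\,k^{(h-r)/(r-\al)}e(G)$ copies of $H$. So where you propose ``clean, then take all copies'', the paper does ``sample to count, then greedily build while sampling to avoid saturated sets''.

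Your Phase~2 contains a concrete gap. The mechanism you name---iteratively deleting $H$-free subgraphs, equivalently removing one edge per copy of $H$ until none remain---yields only $|\cH|\geq e(G')-\ex(n,H)=\Omega(kn^{\al})$, not the $\Omega\bigl(k^{\lam(\ell-1)+1}n^{\al}\bigr)$ you assert. Since $\al>r-1/m_r(H)$ forces $\lam>1$, you are short by a factor $k^{\lam(\ell-1)}$, and without that factor the inequality $d_\cH(S)\leq Ck^{-\lam(|S|-1)}|\cH|/e(G)$ has right-hand side below $1$ well before $|S|=\ell$, making it unattainable. What is actually needed here is exactly the random-sampling supersaturation the paper proves as a standalone lemma; it does not follow from the extremal bound $\ex(n,H)\leq An^{\al}$ plus naive deletion. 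Your Phase~1 is also underspecified: the double-counting you invoke to cap the cumulative deletions at $m/2$ requires, for each $F\subseteq H$, an \emph{upper} bound on the number of copies of $F$ in $G$ that is compatible with your choice of $\tau_{F,T}$, and you have not indicated what bound you intend to use or why it meshes with the target codegree exponent $\lam$. You flag this calibration as the crux, which is fair, but as written neither phase stands on its own.
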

Since $\lam(\al,H)>1$, Theorem~\ref{main1} resolves Conjecture~\ref{MS-conjecture} in a more explicit form under the mild assumption that $\al>r-1/m_r(H)$.
Our general approach for establishing Theorem~\ref{main1} is inspired by the approach used by Ferber, McKinley and Samotij \cite{FMS}. However, we also added  some crucial new twists, in particular establishing a stronger supersaturation theorem for general bipartite graphs then was currently known.
Theorem~\ref{main1} allows one to retrieve Theorem~\ref{FMS-main1} and Theorem~\ref{FMS-main}. 

Theorem~\ref{main1} does not explicitly describe how dense the family $\cH$ is.
Futhermore, we generalized the balanced supersaturation result in Ferber, McKinley, Samotij \cite{FMS}  to turn
any dense family of copies of $H$ in $G$ into a balanced one that is almost as dense. In view of
that, we can obtain an even stronger version of Theorem~\ref{main1} for those $H$
that satisfy the following well-known conjecture of Erd\H{o}s and Simonovits, which roughly
says that one can expect to find asymptotically as many copies of $H$ in $G$ as  one would expect in a random graph with the same edge-density as $G$.

\begin{conjecture}[Erd\H{o}s-Simonovits] \label{ES-conjecture-supersat} 
Let $H$ be a bipartite graph with $h$ vertices and $\ell$ edges. Let $A,\al$ be positive reals satisfying that $\ex(n,H)\leq An^\al$
for all $n\in \mathbb{N}$. There exists constant $C,c$, depending on $H$ such that for all sufficiently large $n$ if
$G$ is an $n$-vertex graph with $e(G)>Cn^\al$ edges then $G$ contains at least $c[e(G)]^\ell/n^{2\ell-h}$ copies of $H$.
\end{conjecture}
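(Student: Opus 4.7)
The natural first step is random edge-sparsification. Choose $p = 2An^{\alpha}/e(G)$ and let $G_p \subseteq G$ retain each edge independently with probability $p$. Since $\mathbb{E}[e(G_p)] = 2An^{\alpha}$, Chernoff's inequality gives $e(G_p) > An^{\alpha} \geq \ex(n,H)$ with high probability, forcing $G_p$ to contain at least one copy of $H$. Each copy of $H$ in $G$ survives in $G_p$ with probability $p^{\ell}$, so the expected number of surviving copies equals $Np^{\ell}$ where $N$ is the total number of copies of $H$ in $G$; this yields
\[
N \;\geq\; \Omega(p^{-\ell}) \;=\; \Omega\!\left(\frac{e(G)^{\ell}}{n^{\alpha\ell}}\right).
\]

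This bound matches the target $c\,[e(G)]^{\ell}/n^{2\ell - h}$ exactly when $\alpha = 2 - h/\ell$. For a bipartite $H$ with $h$ vertices, $\ell$ edges, and containing a cycle, one has $\alpha \geq 2 - 1/m_2(H) \geq 2 - h/\ell$, typically with strict inequality, so the sparsification bound generally falls short by a factor of $n^{\alpha\ell - (2\ell - h)}$. To close this gap, my plan is to decompose $H = F \cup R$ where $F \subseteq H$ realizes $m_2(H)$, first count copies of $F$ in $G$ via sparsification (which is sharp for $F$), and then for each copy of $F$ count its extensions to copies of $H$ by embedding the remaining $h - v(F)$ vertices into the relevant common neighborhoods of $G$. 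In a pseudorandom host this averaging gives the right exponent, since the number of extensions per copy of $F$ is approximately $(e(G)/n^{2})^{\ell - e(F)} \, n^{h - v(F)}$ on average, which combined with the count of copies of $F$ matches the conjectured bound.

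The main obstacle is executing the extension step in an arbitrary extremal $G$, where codegree distributions may be highly uneven and the average extension count is hard to control without additional structural information. Overcoming this would require either a structural theorem ruling out pathological codegree behavior in near-extremal $H$-free graphs, or a self-correcting iterative scheme that discards heavy-codegree substructures without losing too many edges. Both routes are delicate, which is consistent with this being a long-standing open conjecture currently known only for restricted families such as complete bipartite graphs (via K\H{o}v\'ari-S\'os-Tur\'an counting), even cycles (via explicit extremal structure), and the cube. A full proof in the stated generality likely requires a genuinely new insight into the structure of near-extremal $H$-free graphs, rather than a refinement of sampling alone.
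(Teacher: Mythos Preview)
The statement you were asked to prove is presented in the paper as an \emph{open conjecture} (Conjecture~\ref{ES-conjecture-supersat}), not as a theorem, and the paper offers no proof of it. The paper explicitly introduces the terminology ``Erd\H{o}s--Simonovits good'' for those bipartite graphs $H$ that happen to satisfy the conjecture, lists the known special cases (even cycles, complete bipartite graphs, graphs with a dominating vertex on one side, tree blowups, etc.), and then proves its stronger balanced-supersaturation theorem (Theorem~\ref{main2}) \emph{conditionally} on $H$ being Erd\H{o}s--Simonovits good. So there is no proof in the paper to compare against, and your closing paragraph---recognising that the statement is a long-standing open problem---is the correct assessment.

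Two remarks on the content of your discussion. First, your edge-sparsification step is slightly lossy: since with high probability $e(G_p)\ge An^\alpha\ge \ex(n,H)$, the subgraph $G_p$ contains not just one copy of $H$ but at least $e(G_p)-\ex(n,H)=\Omega(n^\alpha)$ copies (this is the trivial deletion bound, Lemma~\ref{simple-bound} in the paper), which sharpens your conclusion to $N=\Omega(n^\alpha p^{-\ell})=\Omega(k^\ell n^\alpha)$ when $e(G)=kn^\alpha$. The paper's own unconditional supersaturation lemma (Lemma~\ref{basic-supersaturation}) uses random \emph{vertex} subsets rather than random edges and obtains the generally stronger bound $N\ge c_H\,[e(G)]^{(h-\alpha)/(2-\alpha)}/n^{\alpha(h-2)/(2-\alpha)}=c_H\,k^{(h-\alpha)/(2-\alpha)}n^\alpha$; this is still weaker than the conjectured $k^\ell n^{\alpha\ell-2\ell+h}$, but it is exactly what the paper needs for its general balanced-supersaturation theorem (Theorem~\ref{main1}). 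Second, your proposed decomposition-and-extension route is morally how the known special cases are handled, and your diagnosis that uneven codegree distributions are the obstruction in a general host $G$ is accurate; no argument of this shape is known to work for arbitrary bipartite $H$.
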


Given a bipartite graph $H$, we say that $H$ is {\it Erd\H{o}s-Simonovits good} if it satisfies Conjecture~\ref{ES-conjecture-supersat}.
There are quite a few known Erd\H{o}s-Simonovits good graphs for appropriate values of $\alpha$,
for instance, even cycles \cite{FS} (see also \cite{MS}), complete bipartite graphs \cite{ES-supersat}, bipartite graphs that have a vertex complete to the other part \cite{CFS}, tree blowups \cite{GJN}, tree degenerate graphs \cite{JL}, etc. For Erd\H{o}s-Simonovits good $H$, we obtain the following stronger theorem.

\begin{theorem}[Balanced supersaturation for Erd\H{o}s-Simonovits good graphs]
 \label{main2}
Let $H$ be a bipartite graph with $h$ vertices and $\ell$ edges.
Suppose that $H$ is Erd\H{o}s-Simonovits good.
Let $\alpha$ and $A$ be positive reals satisfying that $A\geq 16$, $\alpha>2-\frac{1}{m_2(H)}$ and that $\ex(n,H)\leq An^\alpha$ for all $n$.
There exist constants $\delta_H,k_0, C>0$ such that the following holds.
Let $G$ be an $n$-vertex graph with $ kn^\alpha$ edges where $k\geq k_0$.
Then $G$ contains a family $\cH$ of copies of $H$ satisfying that
\begin{enumerate}
\item $|\cH|\geq \delta_H [e(G)]^{\ell}/n^{2\ell-h}$,
\item  $\forall S\subseteq E(G), 1\leq |S|\leq \ell$,  \[d_\cH(S)\leq C \beta^{|S|-1}\frac{|\cH|}{e(G)},\]
where 
\[\beta=\max\{ (1/k) n^{-\phi(\al, H)}, k^{-\lam^*(\al, H)}\},\]
$\phi(\al,H)=\frac{\al\ell-\al+h-2\ell}{\ell-1}$,
$\lam^*(\al,H)=\frac{1}{m^*_2(H)(2-\al)}$.
\end{enumerate}
\end{theorem}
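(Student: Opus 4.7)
The plan is in two stages. First, use the Erd\H{o}s--Simonovits goodness of $H$ to produce a large family $\mathcal{H}_0$ of copies of $H$ in $G$ with $|\mathcal{H}_0| \geq c[e(G)]^\ell/n^{2\ell-h}$; this is immediate from Conjecture~\ref{ES-conjecture-supersat} applied to $G$, once $k \geq k_0$ ensures $e(G) > Cn^\alpha$. Second, apply a deletion lemma to extract from $\mathcal{H}_0$ a balanced subfamily $\mathcal{H}$ of comparable size. The deletion is standard: for each $s \in \{1,\dots,\ell\}$, set $T_s = C\beta^{s-1}|\mathcal{H}_0|/e(G)$; call $S \subseteq E(G)$ of size $s$ \emph{heavy} if $d_{\mathcal{H}_0}(S) > T_s$; call $A \in \mathcal{H}_0$ \emph{bad} if it contains some heavy $S$; and take $\mathcal{H}$ to be the non-bad copies. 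Part~(2) then holds by construction, and for Part~(1) it suffices to show fewer than $|\mathcal{H}_0|/2$ copies are bad, which is the heart of the argument.

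The main codegree estimate I would establish is: for every $S \subseteq E(G)$ of size $s$ with $s < \ell$,
\[
d_{\mathcal{H}_0}(S) \;\leq\; C\, k^{-\lam^*(\al,H)(s-1)} \cdot \frac{|\mathcal{H}_0|}{e(G)}.
\]
Such an $S$ spans some \emph{proper} subgraph $F \subsetneq H$, and $d_{\mathcal{H}_0}(S)$ is at most (a constant depending on $H$ times) the number of extensions in $G$ of a fixed labelled embedding of $F$ to an embedding of $H$. Because $F$ is a proper subgraph of $H$, this extension count is governed by the proper $2$-density $m^*_2(H)$ rather than by $m_2(H)$. I would prove the bound by a K\H{o}v\'ari--S\'os--Tur\'an style iterative argument, exposing the vertices of $V(H)\setminus V(F)$ one at a time in an order consistent with the maximizer in the definition of $m^*_2(H)$, and invoking $\ex(m,H)\leq Am^\al$ at each step to bound the incremental codegree sum. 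A Markov-type double count using this codegree estimate then caps the number of $s$-bad copies at $|\mathcal{H}_0|/(2\ell)$, and summing over $s$ closes the deletion step with $\delta_H = c/2$.

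The two terms defining $\beta$ correspond to two distinct necessities. The $k^{-\lam^*(\al,H)}$ term arises from the extension-count bound just described. The $(1/k)n^{-\phi(\al,H)}$ term is forced by the extreme case $|S|=\ell$: there trivially $d_{\mathcal{H}_0}(S)\leq 1$, and compatibility with $T_\ell$ requires $\beta \geq (e(G)/|\mathcal{H}_0|)^{1/(\ell-1)}$, which, using $|\mathcal{H}_0| = k^\ell n^{\al\ell-2\ell+h}$, equals $(1/k)n^{-\phi(\al,H)}$ up to constants; taking the pointwise maximum yields $\beta$. I expect the main obstacle to be the extension-count estimate with the \emph{proper} $2$-density $m^*_2(H)$ in place of $m_2(H)$. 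This is a refinement of the analogous step in the proof of Theorem~\ref{main1} and of the Ferber--McKinley--Samotij argument, exploiting that every $S$ with $|S|<\ell$ only pins down a proper subgraph of $H$ so that the remaining extension is controlled by $m^*_2(H)$; realizing this refinement requires carefully choosing the vertex-revelation order to match the $m^*_2(H)$-maximizer and iterating K\H{o}v\'ari--S\'os--Tur\'an degree bounds accordingly.
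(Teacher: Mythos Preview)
Your proposal differs fundamentally from the paper's proof, and the central step has a genuine gap. The paper does not take $\mathcal{H}_0$ to be all copies of $H$ and then clean it up by deletion. Instead, it applies the Key Lemma (Lemma~\ref{key-lemma}), which \emph{builds} the balanced family one copy at a time: starting from $\mathcal{F}=\emptyset$, at each step one calls a set $S$ saturated if its degree in $\mathcal{F}$ is already near the target bound, passes to a random induced subgraph $G[W]$ of carefully chosen size $np$, deletes one edge from every saturated set inside $W$, and uses the Erd\H{o}s--Simonovits lower bound $f(np,\cdot)$ to exhibit a fresh copy of $H$ avoiding all saturated sets. The appearance of $m_2^*(H)$ comes from the survival probability of saturated sets under sampling: a saturated $S$ with $|S|<\ell$ spans a proper subgraph, hence at least $2+(|S|-1)/m_2^*(H)$ vertices, which controls $\mathbb{P}[S\subseteq W]$.

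Your ``main codegree estimate''
\[
d_{\mathcal{H}_0}(S)\;\le\; C\,k^{-\lambda^*(\alpha,H)(|S|-1)}\,\frac{|\mathcal{H}_0|}{e(G)}
\quad\text{for every }S
\]
is simply false for the family of \emph{all} copies of $H$, and no K\H{o}v\'ari--S\'os--Tur\'an-style extension count can rescue it, because the bound $\ex(n,H)\le An^\alpha$ is global and places no constraint on local hot spots. Concretely, take $H=C_4$, $\alpha=3/2$, $k$ a large constant, and let $G$ be the union of a random-like graph on $n$ vertices with $kn^{3/2}$ edges and a clique $K_m$ with $m=n^{1/3}$. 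The clique contributes only $O(n^{4/3})=o(|\mathcal{H}_0|)$ extra copies, so $|\mathcal{H}_0|/e(G)\asymp k^3 n^{1/2}$; yet any edge $S$ inside the clique has $d_{\mathcal{H}_0}(S)\ge \Theta(m^2)=\Theta(n^{2/3})\gg n^{1/2}$. One can inflate such hot spots so that the bad copies are not a negligible fraction, so the deletion step does not close either. This is precisely why the paper (and Morris--Saxton, and Ferber--McKinley--Samotij) resort to an iterative construction rather than post-hoc pruning: the family must be kept balanced as it is grown, using the random-subset trick to dodge the saturated sets at every stage.
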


Equipped with Theorem~\ref{main1} and Theorem~\ref{main2},
we then apply them under the framework of Morris and Saxton~\cite{MS} to obtain general bounds on $\ex(G(n,p), H)$ 
as follows.

\begin{theorem} \label{random-Turan1}
Let $H$ be a bipartite graph with $h$ vertices and $\ell$ edges. Let $A,\al$ be positive reals such that $\ex(n,H)\leq An^\al$
for every $n\in \mathbb{N}$.  There exists a constant $C=C(H)$ such that 

\[\ex(G(n,p), H)\leq \begin{cases} 
   Cn^{2-\frac{1}{m_2(H)}} & \text{if } p\leq n^{-\frac{1}{m_2(H)}},\\
Cp^{1-\frac{1}{\lam(\al,H)} }n^\al & \text{otherwise } 
\end{cases}\]
with high probability as $n\to \infty$.
\end{theorem}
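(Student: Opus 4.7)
The plan is to follow the Morris--Saxton template from Theorem~\ref{MS-cycle-random}, feeding Theorem~\ref{main1} into the hypergraph container method in place of Theorem~\ref{MS-cycle-supersat}; throughout, I write $\lam = \lam(\al, H) > 1$. First I dispose of the case $p \le n^{-1/m_2(H)}$ by Chernoff: with high probability $|E(G(n,p))| \le 2p n^2 \le 2 n^{2 - 1/m_2(H)}$, and this bound passes to every subgraph. So the interesting work is entirely in the regime $p > n^{-1/m_2(H)}$.

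\textbf{Containers.} For the main regime, I would apply the hypergraph container method (Balogh--Morris--Samotij, Saxton--Thomason) with Theorem~\ref{main1} as the balanced supersaturation input. The codegree decay $k^{-\lam(|S|-1)}$ with $\lam > 1$ is precisely what lets the container algorithm be iterated from $\binom{n}{2}$ edges down to $k_0 n^\al$. The output would be a collection $\cC$ of subgraphs of $K_n$ together with an assignment $F \mapsto \phi(F) \in \cC$ for every $H$-free $n$-vertex graph $F$, such that $F \subseteq \phi(F)$, $|E(\phi(F))| \le k_0 n^\al$, and $\log|\cC| \le K n^{2-1/m_2(H)}$ up to polylogarithmic factors (which can be absorbed into the eventual constant), for some $K = K(H)$.

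\textbf{Union bound.} I would set $m = C p^{1 - 1/\lam} n^\al$ for a large constant $C = C(H)$. For any fixed $C^* \in \cC$,
\[\E\bigl[|E(G(n,p)) \cap C^*|\bigr] \le p \cdot k_0 n^\al = k_0 p^{1/\lam} \cdot (m/C) \le (k_0/C) m,\]
so choosing $C$ at least, say, $6k_0$ gives $m \ge 6\,\E[\,\cdot\,]$, and Chernoff yields $\Pr[|E(G(n,p)) \cap C^*| \ge m] \le \exp(-\Omega(m))$. Union-bounding,
\[\Pr\bigl[\ex(G(n,p), H) \ge m\bigr] \le |\cC| \cdot \exp(-\Omega(m)) \le \exp\bigl(O(n^{2 - 1/m_2(H)}) - \Omega(m) \bigr),\]
and since $p > n^{-1/m_2(H)}$ forces $m > n^{2 - 1/m_2(H)}$, the right-hand side is $o(1)$ once $C$ is taken large enough relative to $K$.

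\textbf{Main obstacle.} The hard part will be establishing the bound $\log|\cC| = \widetilde{O}(n^{2 - 1/m_2(H)})$ in the container step, because a one-shot application of balanced supersaturation only yields $\log|\cC| = O(n^\al)$, which is too weak near the threshold to close the union bound. The exponent $\lam > 1$ in Theorem~\ref{main1} is exactly what makes the per-iteration fingerprint sizes decay geometrically, so that their telescoping sum is dominated by the last (smallest) round. Implementing the iteration requires matching the supersaturation parameter $k$ to the current edge count at each step, handling the base case when the edge count drops below $k_0 n^\al$, and tracking the $\log n$ factors from the total number of rounds---all following Morris and Saxton's argument for $C_{2\ell}$, with our $\lam(\al, H)$ playing the role of their $\ell/(\ell-1)$.
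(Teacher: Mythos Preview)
Your plan has a concrete error in the container step that makes the union bound fail. The per-iteration fingerprint sizes do \emph{not} decay; they grow. At the step where the current graph has $k_j n^\al$ edges, Theorem~\ref{main1} produces a fingerprint of size $\approx k_j^{1-\lam}n^\al$, and since $\lam>1$ this is a \emph{decreasing} function of $k_j$. The iteration starts at $k_1\approx n^{2-\al}$, where the fingerprint is $\approx n^{(2-\al)(1-\lam)+\al}=n^{2-1/m_2(H)}$, and ends at $k_0=O(1)$, where the fingerprint is $\Theta(n^\al)$. The geometric sum is therefore dominated by the \emph{last} round, giving total fingerprint size $\Theta(n^\al)$ and hence $\log|\cC|=\Theta(n^\al)$, not $\widetilde O(n^{2-1/m_2(H)})$. (Your claimed bound is exactly the size of the first fingerprint, which is why I suspect you reversed the direction of growth.) With $\log|\cC|=\Theta(n^\al)$ your union bound requires $m=Cp^{1-1/\lam}n^\al\gg n^\al$, i.e.\ $p\gg 1$, so the argument collapses for every $p<1$.

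The paper's proof differs from your outline in two essential ways. First, the container iteration is stopped at $k=p^{-1/\lam}$ (depending on $p$), not at $k_0$; this makes the containers large ($kn^\al$ edges) but keeps $\E[|G(n,p)\cap C|]\le p\cdot kn^\al=p^{1-1/\lam}n^\al$ and, more importantly, keeps the total fingerprint size at $O(k^{1-\lam}n^\al)=O(p^{(\lam-1)/\lam}n^\al)$. Second, one does \emph{not} union-bound over containers; one union-bounds over fingerprints $S=g(I)$, using that $S\subset I\subset G(n,p)$ contributes a factor $p^{|S|}$ to the probability. This $p^s$ factor is combined with the refined per-size count $|\cS(s)|\le (Cn^\al/s)^{\frac{\lam}{\lam-1}s}\exp(Ck^{1-\lam}n^\al)$ from the iterated container lemma (Theorem~\ref{container-main}); the product $p^s\cdot(Cn^\al/s)^{\frac{\lam}{\lam-1}s}=(Cp^{(\lam-1)/\lam}n^\al/s)^{\frac{\lam}{\lam-1}s}$ is then uniformly $\exp(O(p^{(\lam-1)/\lam}n^\al))$, which matches $m$ and closes the bound without log losses. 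The plain container-plus-Chernoff route, even with the correct stopping level $k=p^{-1/\lam}$, leaves an unavoidable $\log n$ because it never exploits $g(I)\subset G(n,p)$.
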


\begin{theorem}\label{random-Turan2}
Let $H$ be a bipartite graph with $h$ vertices and $\ell$ edges such that $H$ is Erd\H{o}s-Simonovits good. Let $A,\al$ be positive reals such that $\ex(n,H)\leq An^\al$
for every $n\in \mathbb{N}$.  There exists a constant $C=C(H)$ such that 
\[\ex(G(n,p), H)\leq \begin{cases}  Cn^{\alpha - \phi(\al, H)}(\log(n))^{2} & \text{if } p\leq n^{-\frac{\phi(\al,H)\lam^*(\al,H)}{\lam^*(\al,H)-1} } \cdot (\log n)^{\frac{2\lam^*(\al, H)}{\lam^*(\al, H)-1}} 
\\
Cp^{1-\frac{1}{\lam^*(\al, H)} }n^\al & \text{otherwise}
\end{cases}\]
with high probability as $n\to \infty$,  where
$\phi(\al,H)=\frac{\al\ell-\al+h-2\ell}{\ell-1}$,
$\lam^*(\al,H)=\frac{1}{m^*_2(H)(2-\al)}$.
\end{theorem}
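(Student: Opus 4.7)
The plan is to follow the Morris--Saxton proof of \Cref{MS-cycle-random} in spirit, replacing their balanced supersaturation input (\Cref{MS-cycle-supersat}) by our stronger \Cref{main2}. I would iteratively apply a hypergraph container lemma (in the form used by Morris and Saxton) to $K_n$, producing a small family $\cC$ of containers $C\subseteq E(K_n)$ such that every $H$-free $F\subseteq K_n$ lies inside some $C\in\cC$ together with a small fingerprint $S(C)\subseteq F\cap C$; a union bound plus Chernoff against $G(n,p)$ then delivers both regimes of the stated bound.

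\textbf{Iterative container construction and parameter tracking.} Starting from $G_0=K_n$, at stage $i$ I would work inside each current container $G_i$ with $e(G_i)=k_i n^\al$. \Cref{main2} supplies a hypergraph $\cH_i$ of copies of $H$ in $G_i$ with $|\cH_i|\geq \delta_H k_i^\ell n^{\al\ell+h-2\ell}$ and
\[ d_{\cH_i}(S)\leq C\beta_i^{|S|-1}\frac{|\cH_i|}{e(G_i)},\quad \beta_i=\max\{k_i^{-1}n^{-\phi(\al,H)},\; k_i^{-\lam^*(\al,H)}\}. \]
Feeding $\cH_i$ into the container lemma with co-degree parameter $\tau_i=O(\beta_i)$ yields refinements $G_{i+1}\subsetneq G_i$ with $e(G_{i+1})\leq (1-\eps)e(G_i)$, each tagged by a fingerprint $S_i\subseteq G_{i+1}$ of size $O(\beta_i e(G_i))$, such that every $H$-free subgraph of $G_i$ containing $S_i$ lies in $G_{i+1}$. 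In the fast regime $\beta_i=k_i^{-\lam^*}$, the sizes $|S_i|=O(k_i^{1-\lam^*}n^\al)$ decay geometrically (since $\lam^*>1$), so their total is dominated by the last term. In the slow regime $\beta_i=k_i^{-1}n^{-\phi}$, each $|S_i|=O(n^{\al-\phi})$ is constant, contributing $O(n^{\al-\phi}\log n)$ over the $O(\log n)$ iterations needed to push $k_i$ down.

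\textbf{Deriving the random-graph bound.} Any $H$-free $F\subseteq G(n,p)$ has $S(C)\subseteq F\subseteq C$ for some $C\in\cC$, so
\[ \Pb\bigl[\ex(G(n,p),H)>t\bigr]\leq \sum_{C\in\cC} p^{|S(C)|}\,\Pb\bigl[\mathrm{Bin}(e(C),p)\geq t-|S(C)|\bigr]. \]
For the small-$p$ claim, iterate until $e(G_T)\leq Cn^{\al-\phi}(\log n)^2$ with total fingerprint size $s^\star=O(n^{\al-\phi}\log n)$: then $\binom{n^2}{s^\star}p^{s^\star}=o(1)$ precisely when $p\leq n^{-\phi\lam^*/(\lam^*-1)}(\log n)^{2\lam^*/(\lam^*-1)}$, yielding $\ex(G(n,p),H)\leq Cn^{\al-\phi}(\log n)^2$ with high probability. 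For the large-$p$ claim, stop earlier (still in the fast regime) at $k_T\asymp p^{-1/\lam^*}$: here Chernoff gives $\Pb[\mathrm{Bin}(e(C),p)\geq t-|S(C)|]=\expo(-\Omega(t))$ whenever $t$ is a constant multiple of $pe(G_T)\asymp p^{1-1/\lam^*}n^\al$, and this exponential decay overwhelms the combinatorial factor $\binom{n^2}{|S(C)|}p^{|S(C)|}$.

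\textbf{Main obstacle.} The chief difficulty is to balance the two regimes of $\beta_i$ across the iteration so that fingerprint contributions, container sizes, and Chernoff tails mesh to produce precisely the exponents $\al-\phi$ and $1-1/\lam^*$ together with the $(\log n)^2$ at the threshold. Morris and Saxton carried this out for $C_{2\ell}$ with explicit parameters; the work here is to verify that the abstract expressions $\phi(\al,H)$ and $\lam^*(\al,H)$ delivered by \Cref{main2} give the same balancing, forcing the threshold $p\asymp n^{-\phi\lam^*/(\lam^*-1)}(\log n)^{2\lam^*/(\lam^*-1)}$ on the nose.
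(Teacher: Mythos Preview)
Your architecture—iterate containers from $K_n$ via \Cref{main2}, collect fingerprints, stop at $k_T\asymp p^{-1/\lam^*}$, union-bound against $G(n,p)$—is exactly the paper's route. Two concrete steps in the sketch would fail as written, however.

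First, the fingerprint count $\binom{n^2}{|S(C)|}$ is far too crude. With $|S(C)|\asymp t\asymp p^{1-1/\lam^*}n^\al$ one has $\binom{n^2}{|S(C)|}p^{|S(C)|}=\exp(\Theta(t\log n))$, which the Chernoff tail $\exp(-\Theta(t))$ cannot kill. The paper exploits that each piece $T_j$ of the fingerprint is drawn from $G_j$ (only $k_jn^\al$ edges, not $n^2$) and that the container lemma forces $|T_j|\leq(1/\eps)k_j^{1-\lam^*}n^\al$, tying $k_j$ back to $|T_j|$; bounding $\prod_j\binom{k_jn^\al}{a_j}$ via the convexity estimate \Cref{sum-estimate} then gives the refined count $(Cn^\al/s)^{\frac{\lam^*}{\lam^*-1}s}\exp(Ck^{1-\lam^*}n^\al)$ on colored fingerprints of size $s$. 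It is precisely the exponent $\tfrac{\lam^*}{\lam^*-1}$ here that produces $p^{1-1/\lam^*}$ in the final bound, so this refinement is not optional.

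Second, for small $p$ you propose iterating until $e(G_T)\leq Cn^{\al-\phi}(\log n)^2$, but since $\phi>0$ (a consequence of $\al>2-1/m_2(H)$) this would force $k_T\asymp n^{-\phi}(\log n)^2\to 0$, well below the threshold $k_0$ at which \Cref{main2} applies; you cannot run the supersaturation that far down. The paper sidesteps this entirely: it proves only the large-$p$ bound and then invokes monotonicity of $\ex(G(n,p),H)$ in $p$, checking in one line that $p_0^{1-1/\lam^*}n^\al=n^{\al-\phi}(\log n)^2$ at the cutoff $p_0$.
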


Theorem~\ref{random-Turan2} implies
Morris and Saxton's result on $\ex(G(n,p), C_{2\ell})$ (see Corollary~\ref{c2ell-random}).
To the best of our knowledge, 
Theorem~\ref{random-Turan1} and Theorem~\ref{random-Turan2} appear to be the first general results on $\ex(G(n,p),H)$ for bipartite $H$.

The rest of the paper is organized as follows. In Section~\ref{sec:balanced-supersat}, we derive our results
on balanced supersaturation. In Section~\ref{sec:random-Turan}, we apply our supersaturation results to derive general bounds
on the random Tur\'an problem.


\section{Balanced supersaturation} \label{sec:balanced-supersat}

We start with a standard estimation lemma.

\begin{lemma} \label{binom-estimate}
Let $n\geq w\geq h$ be positive integers. Then $\binom{n-h}{w-h}/\binom{n}{w}\leq (w/n)^h$. Furthermore, if $w\geq h^2$
then $\binom{n-h}{w-h}/\binom{n}{w} \geq (1/2) (w/n)^h$.
\end{lemma}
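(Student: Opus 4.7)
The plan is to rewrite the ratio as a telescoping product and then control it term-by-term. A direct expansion gives
\[
\frac{\binom{n-h}{w-h}}{\binom{n}{w}} = \frac{w!\,(n-h)!}{(w-h)!\,n!} = \prod_{i=0}^{h-1}\frac{w-i}{n-i},
\]
so the whole problem reduces to bounding this product against $(w/n)^h$.

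For the upper bound, I would observe that since $w\leq n$, the function $i\mapsto (w-i)/(n-i)$ is non-increasing on $[0,h-1]$ (equivalently, $(w-i)/(n-i)\leq w/n$ is equivalent to $wi\leq ni$, which holds), so each factor is at most $w/n$ and the inequality $\prod_i (w-i)/(n-i)\leq (w/n)^h$ is immediate.

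For the lower bound under the hypothesis $w\geq h^2$, I would factor out $(w/n)^h$ and write
\[
\prod_{i=0}^{h-1}\frac{w-i}{n-i} \;=\; (w/n)^h\,\prod_{i=0}^{h-1}\frac{1-i/w}{1-i/n}.
\]
Since $1-i/n\leq 1$, each ratio inside the product is at least $1-i/w$, so it suffices to prove $\prod_{i=0}^{h-1}(1-i/w)\geq 1/2$. Here I would use the standard inequality $\prod_i(1-x_i)\geq 1-\sum_i x_i$ (valid when each $x_i\in[0,1]$), giving
\[
\prod_{i=0}^{h-1}\bigl(1-i/w\bigr) \;\geq\; 1 - \frac{1}{w}\sum_{i=0}^{h-1} i \;=\; 1 - \frac{\binom{h}{2}}{w} \;\geq\; 1 - \frac{h^2}{2w} \;\geq\; \frac{1}{2},
\]
where the last step uses $w\geq h^2$. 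There is no real obstacle in this proof since it is a routine estimate; the only mild care needed is in justifying the factor-by-factor monotonicity in the upper bound and in applying the $1-\sum x_i$ bound in the regime where the hypothesis $w\geq h^2$ makes it actually useful.
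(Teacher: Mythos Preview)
Your proof is correct and follows essentially the same route as the paper: both write the ratio as $\prod_{i=0}^{h-1}\frac{w-i}{n-i}$, bound each factor by $w/n$ for the upper bound, and for the lower bound replace each $n-i$ by $n$ and then use $\prod_i(1-i/w)\geq 1-\binom{h}{2}/w\geq 1/2$ when $w\geq h^2$. The only cosmetic difference is that the paper expands $w(w-1)\cdots(w-h+1)>w^h-\binom{h}{2}w^{h-1}$ directly rather than quoting the $\prod(1-x_i)\geq 1-\sum x_i$ inequality.
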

\begin{proof}
We have
\[\frac{\binom{n-h}{w-h}}{\binom{n}{w}}=\frac{w(w-1)\dots (w-h+1)}{n(n-1)\cdots(n-h+1)}.\]
Hence, $\binom{n-h}{w-h}/\binom{n}{w}<(w/n)^h$. If $w\geq h^2$, then
\[\frac{\binom{n-h}{w-h}}{\binom{n}{w}}>
\frac{w(w-1)\cdots (w-h+1)}{n^h}>\frac{w^h-\binom{h}{2} w^{h-1}}{n^h} >\frac{w^h[1-\binom{h}{2} (1/w)]}{n^h} >
\frac{1}{2} \frac{w^h}{n^h}. \]
\end{proof}

The following simple lemma is folklore. We include a proof for completeness.

\begin{lemma} \label{simple-bound}
Let $r\geq 2$. Let $H$ be an $r$-graph. Let $G$ be an $n$-vertex $r$-graph with $e(G)>\ex(n,H)$. Then
$G$ contains at least $e(G)-\ex(n,H)$ different copies of $H$.
\end{lemma}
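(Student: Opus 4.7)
The plan is to prove this by a straightforward greedy edge-deletion argument, producing distinct copies of $H$ one by one. Since $e(G) > \ex(n,H)$, the graph $G$ contains at least one copy of $H$. I will iteratively destroy such copies until no copy remains, and the key point will be bookkeeping so that each step contributes a genuinely new copy.

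Concretely, I would run the following process. Set $G_0 = G$. At step $i \geq 1$, if $G_{i-1}$ contains a copy of $H$, pick one such copy, call it $H_i$, choose any edge $e_i \in E(H_i)$, and let $G_i = G_{i-1} - e_i$. Terminate at step $t$ when $G_t$ is $H$-free. Because $G_t$ is $H$-free on $n$ vertices, we have $e(G_t) \leq \ex(n,H)$, and since exactly $t$ edges have been deleted,
\[
t = e(G) - e(G_t) \geq e(G) - \ex(n,H).
\]

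The crux is to argue that the copies $H_1, H_2, \ldots, H_t$ are pairwise distinct subgraphs of the original graph $G$. For any $i < j$, the copy $H_i$ is a subgraph of $G_{i-1}$ that contains the edge $e_i$; but $e_i \notin E(G_i) \supseteq E(G_{j-1})$, so $H_i$ is not a subgraph of $G_{j-1}$, while $H_j$ is. Hence $H_j \neq H_i$. Therefore $G$ contains at least $t \geq e(G) - \ex(n,H)$ distinct copies of $H$, completing the proof.

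There is no real obstacle here; the only thing worth being careful about is the distinctness argument, which is handled by tracking that the destroyed edge $e_i$ belongs to $H_i$ but is absent from every subsequent $G_j$ with $j > i$. This is exactly the folklore observation that a greedy deletion process yields an injection from deleted edges to destroyed copies of $H$.
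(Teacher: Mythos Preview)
Your proof is correct and is essentially the same greedy edge-deletion argument as the paper's: the paper phrases it as ``remove at most $m$ edges (one from each copy of $H$) to make $G$ $H$-free, so $e(G)-m\le \ex(n,H)$,'' while you unroll this into an iterative process and verify distinctness explicitly. The content is identical.
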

\begin{proof} Let $m$ be the number of copies of $H$ in $G$.  Then we can find a set $S$
of at most $m$ edges whose removal destroys all the copies of $H$ in $G$. Hence $e(G)-m\leq \ex(n,H)$
and thus $m\geq e(G)-ex(n,H)$.
\end{proof}

We next give a nontrivial lower bound on the number of copies of any given $r$-partite $r$-graph
in a dense enough $r$-graph. This lower bound may be of independent interest.

\begin{lemma} \label{basic-supersaturation}
Let $r$ be an integer with $r\geq 2$. Let $H$ be an $r$-partite $r$-graph with $h$ vertices.
Let $\alpha$ and $A$ be positive reals satisfying that $A\geq r^{2r}$ and that $\ex(n,H)\leq An^\alpha$ for all $n$. There exists a constant $c_H>0$ such that the following holds.
Let $G$ be an $n$-vertex graph with $kn^\alpha$ edges where $k\geq 2^{3r} A$.
Then $G$ contains at least $c_H[e(G)]^{\frac{h-\al}{r-\al}} / n^{\frac{\al(h-r)}{r-\al}} = c_H k^{ \frac{h - \alpha}{r - \alpha}}n^{\alpha}$  copies of $H$.
\end{lemma}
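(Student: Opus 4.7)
The plan is a direct random-sampling supersaturation argument. I would sample a uniformly random $w$-subset $W \subseteq V(G)$, lower-bound $\mathbb{E}[e(G[W])]$ by the first moment, apply Lemma~\ref{simple-bound} inside $G[W]$ to convert the excess of $e(G[W])$ over $\ex(w, H) \leq A w^\alpha$ into copies of $H$, and then average over $W$ using the fact that each copy of $H$ in $G$ lies in exactly $\binom{n-h}{w-h}$ of the $\binom{n}{w}$ choices of $W$. Everything hinges on a single, well-chosen value of $w$.

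Concretely, set $\lambda := (4A/k)^{1/(r-\alpha)}$ and $w := \lceil \lambda n \rceil$. By Lemma~\ref{binom-estimate} (applied with $h$ replaced by $r$),
\[ \mathbb{E}[e(G[W])] \;=\; e(G) \cdot \frac{\binom{n-r}{w-r}}{\binom{n}{w}} \;\geq\; \tfrac{1}{2}\, k\, w^r\, n^{\alpha - r}, \]
provided $w \geq r^2$. The definition of $w$ is cooked up so that the right-hand side is at least $2 A w^\alpha \geq 2\ex(w, H)$; hence Lemma~\ref{simple-bound} and linearity give $\mathbb{E}\bigl[\#\{\text{copies of } H \text{ in } G[W]\}\bigr] \geq \tfrac{1}{4} k w^r n^{\alpha - r}$. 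The averaging step then reads
\[ \#\{\text{copies of } H \text{ in } G\} \;=\; \frac{\binom{n}{w}}{\binom{n-h}{w-h}} \cdot \mathbb{E}\bigl[\#\{\text{copies of } H \text{ in } G[W]\}\bigr] \;\geq\; (n/w)^h \cdot \tfrac{1}{4}\, k\, w^r\, n^{\alpha - r}, \]
using the direction of Lemma~\ref{binom-estimate} that does not require $w \geq h^2$. Substituting $w \leq 2\lambda n$ and simplifying, the exponents of $n$ collapse to $\alpha$ while the $k$-exponent becomes $(h - \alpha)/(r - \alpha)$, yielding the claimed bound $c_H k^{(h-\alpha)/(r-\alpha)} n^\alpha$ with $c_H$ depending only on $r, h, A, \alpha$.

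The main point requiring care is showing that $r^2 \leq w \leq n$ holds uniformly across admissible $(n, k)$. The upper bound $\lambda \leq 1$ (and hence $w \leq n$) follows from $k \geq 2^{3r} A \geq 4A$. For the lower bound, since $G$ is a simple $r$-graph we have $k = e(G)/n^\alpha \leq n^{r - \alpha}$, so $\lambda n \geq (4A)^{1/(r-\alpha)}$; the hypothesis $A \geq r^{2r}$ is precisely what forces $(4A)^{1/(r-\alpha)} \geq r^2$, giving $w \geq r^2$ automatically. Thus no separate small-$n$ analysis is needed, and the whole proof reduces to the chain of averaging estimates above.
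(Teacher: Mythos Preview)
Your argument is correct and follows essentially the same random-sampling-plus-averaging strategy as the paper: choose $w \approx (cA/k)^{1/(r-\alpha)} n$, lower-bound $\mathbb{E}[e(G[W])]$ via Lemma~\ref{binom-estimate}, convert edge excess into copies of $H$ via Lemma~\ref{simple-bound}, and undo the sampling by dividing by $\binom{n-h}{w-h}/\binom{n}{w}$. The one stylistic difference is that the paper splits the $w$-sets into ``good'' and ``bad'' according to whether $e(G[W]) \geq \tfrac14 e(G)p^r$ and applies Lemma~\ref{simple-bound} only on the good sets, whereas you observe that the pointwise inequality $\#\{\text{copies of }H\text{ in }G[W]\} \geq e(G[W]) - Aw^\alpha$ holds for \emph{every} $W$ (trivially when the right side is negative), so linearity of expectation applies directly. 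Your route is marginally cleaner here; the paper's good/bad split is really a warm-up for the Key Lemma, where one must apply a convex function $f$ to the individual $e(G[W_i])$ and genuinely needs the split.
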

\begin{proof} 
Let $p$ be a real  such that
\[  (8A/k)^{\frac{1}{r-\alpha}} \leq p\leq 2(8A/k)^{\frac{1}{r-\alpha}}
\quad \mbox{ and } \quad np\in \mathbb{Z}^+ .\]
Such $p$ exists since 
\[(8A/k)^{\frac{1}{r-\al} }\geq (8A/n^{r-\al})^{\frac{1}{r-\al}}\geq \frac{r^2}{n},\]
by our condition on $A$. Since $k\geq 2^{3r} A$, $p\in (0,1)$.
Let $W$ be a uniform random subset of $V(G)$ of size $w=np$.  
By our choice of $p$,  we have $w\geq r^2$.
By Lemma~\ref{binom-estimate},
\begin{equation} \label{eq1}
\E[e(G[W])]= e(G) \binom{n-r}{w-r}/\binom{n}{w}\geq \frac{1}{2} e(G) (w/n)^r=\frac{1}{2} e(G) p^r.
\end{equation}
Let $t=\binom{n}{w}$ and $W_1,W_2,\dots, W_t$ be all the $\binom{n}{w}$ subsets of $V(G)$ of size $w$. For each $i\in [t]$, let $G_i=G[W_i]$.
Let $\I_{good}$ be the set of $i\in [t]$ such that $e(G_i)\geq \frac{1}{4}mp^r$ and $\I_{bad}=[t]\setminus \I_{good}$.
Then $\sum_{i\in \I_{bad}} e(G_i)\leq \binom{n}{w} \frac{1}{4}e(G)p^r$ and hence by \eqref{eq1}
\begin{equation} \label{eq2}
\sum_{i\in \I_{good}} e(G_i) \geq \binom{n}{w}\frac{1}{4}e(G)p^r.
\end{equation}
For each $i\in \I_{good}$, we have
\[e(G_i)\geq \frac{1}{4} e(G) p^r=\frac{1}{4}kn^\alpha p^r=\frac{1}{4}kp^{r-\alpha} (np)^{\alpha}=\frac{1}{4}kp^{r-\alpha} w^\alpha
\geq 2Aw^\alpha,\]
where the last inequality holds since $p\geq (\frac{8A}{k})^{\frac{1}{r-\alpha}}$.
Hence $e(G_i)\geq 2Aw^\alpha>2\ex(w,H)$. By Lemma~\ref{simple-bound}, $G_i$ contains at least $e(G_i)-\ex(n,H)\geq 
\frac{1}{2}e(G_i)$ copies of $H$. 
Let $\lam$ denote the number of copies of $H$ in $G$.
Then, using \eqref{eq2}, we have
\[\lam\geq \frac{1}{\binom{n-h}{w-h}} \sum_{i\in \I_{good} } \frac{1}{2} e(G_i) \geq \frac{1}{8} \frac{\binom{n}{w}}{\binom{n-h}{w-h}} e(G) p^r
\geq \frac{1}{8} (n/w)^h e(G) p^r =\frac{1}{8} e(G) p^{r-h}= \frac{1}{8}e(G)(1/p)^{h-r}.
\]
Since $k=e(G)/n^\al$ and $p\leq 2(8A/k)^{\frac{1}{r-\al}}$, we have
\[\lam\geq \frac{1}{8} e(G) (1/2)^{h-r}(k/8A)^{\frac{h-r}{r-\alpha}}\geq c_H [e(G)]^{\frac{h-\al}{r-\al}} / n^{\frac{\al(h-r)}{r-\al}},\]
for some constant $c_H>0$.
\end{proof}

While Lemma~\ref{basic-supersaturation} gives a reasonably dense family of copies of $H$ in a dense enough host graph $G$, it is generally not as dense as what is conjectured in Conjecture~\ref{ES-conjecture-supersat}. Next, we present our key lemma, which is the basis of our main results in this paper.

\begin{lemma} [Key Lemma] \label{key-lemma}
Let $r,h,\ell$ be positive integers, where $h\geq r\geq 2$.
Let $H$ be an $r$-partite $r$-graph with $h$ vertices and $\ell$ edges. Let $\alpha, A$ be positive reals such that for each $n$ every 
$n$-vertex graph $G$ with $m\geq An^\alpha$ edges contains  at least $f(n,m)$ copies of $H$,
where $f$ is a function satisfying the following.
\begin{enumerate}
\item There is a constant $\delta>0$ such that for all $p\in (0,1]$ and  all positive reals $n,m$ 
\[ f(n,m)\geq \delta m^{\frac{h-\al}{r-\al}} / n^{\frac{\al(h-r)}{r-\al}} \quad \mbox{ and  }  \quad f(np,mp^r)\geq \delta f(n,m) p^h.\]
\item For fixed $n$, $f(n,m)$ is increasing and convex in $m$.
\end{enumerate}
Then there exist constants $k_0=k_0(H)$ and $C=C(H) \geq 1$ such that if $G$ is an $n$-vertex $r$-graph with $kn^\alpha$ edges, where $k\geq k_0$, then $G$ contains
a family $\F$ of copies of $H$ satisfying that
\begin{enumerate}
\item $|\F|\geq \delta f(n,\frac{1}{8}e(G))$.
\item $\forall S\subseteq E(G), 1\leq |S|\leq \ell -  1$, \[d_\F(S)\leq C k^{- \lambda^*(\alpha, H)(|S| - 1)} \frac{|\F|}{e(G)},\]
$\lam^*(\al, H)=\frac{1}{m^*_r(H)(r-\al)}$ and
$m^*_r(H)$ is the proper $r$-density of $H$.
\end{enumerate}
\end{lemma}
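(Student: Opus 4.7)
The plan is to adapt the iterative deletion strategy from Ferber--McKinley--Samotij, with heaviness thresholds calibrated by the \emph{proper} $r$-density $m_r^*(H)$ rather than $m_r(H)$, so that the surviving family attains the sharper balance exponent $\lam^*(\al, H)$.

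I begin by taking $\cH_0$ to be the family of all copies of $H$ in $G$. The hypothesis on $f$ gives $|\cH_0| \geq f(n, e(G))$, and condition (1) on $f$ implies $|\cH_0|/e(G) \geq \delta k^{(h-r)/(r-\al)}$, which will serve as the normalizing ``average degree per edge''. For each proper subgraph $F \subsetneq H$ with $v(F) > r$ and $s := e(F) \in \{2, \ldots, \ell - 1\}$, I set a threshold
\[
\tau_F \;=\; C_1 \cdot k^{-\lam^*(\al, H)(s - 1)} \cdot \frac{|\cH_0|}{e(G)},
\]
and say that an $s$-subset $S \subseteq E(G)$ spanning a copy of $F$ is \emph{$F$-heavy} if $d_{\cH_0}(S) > \tau_F$. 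Let $\cH_{\mathrm{bad}} \subseteq \cH_0$ be the copies containing at least one $F$-heavy subset for some such $F$, and set $\F := \cH_0 \setminus \cH_{\mathrm{bad}}$.

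The heart of the proof is showing $|\cH_{\mathrm{bad}}| \leq |\cH_0|/4$. My approach is to handle each proper $F \subsetneq H$ in turn via a two-step double counting: Markov's inequality applied to $S \mapsto d_{\cH_0}(S)$ bounds the number of $F$-heavy subsets by $O(|\cH_0|/\tau_F)$, and then a structural extension-counting argument controls the total weight $\sum_{S \text{ heavy}} d_{\cH_0}(S)$. The exponent $\lam^*(\al, H) = 1/(m_r^*(H)(r-\al))$ is tuned precisely so that the proper-$r$-density inequality $(s-1)/(v(F)-r) \leq m_r^*(H)$ makes each $F$'s contribution at most $|\cH_0|/(4 c_H)$, where $c_H$ counts the isomorphism types of proper subgraphs of $H$; summing over all such $F$ then yields $|\cH_{\mathrm{bad}}| \leq |\cH_0|/4$.

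Once this bound is in hand, $|\F| \geq \tfrac{3}{4}|\cH_0| \geq \tfrac{3}{4} f(n, e(G)) \geq \delta f(n, e(G)/8)$ by monotonicity of $f$ in its second argument. The balance bound for $|S| \in \{2, \ldots, \ell - 1\}$ is immediate from the definition of heaviness, and the $|S| = 1$ case is handled by a preliminary cleaning pass that deletes any edge $e$ with $d_{\cH_0}(e) > C_2 |\cH_0|/e(G)$, which by averaging costs at most a constant fraction of the copies. The main obstacle I anticipate is the extension-counting step, since for a general sparse $G$ this is only sharp in a random-like setting; a delicate iterative procedure --- possibly invoking the subsampling condition $f(np, mp^r) \geq \delta f(n, m) p^h$ to pass to a cleaner substructure at a smaller scale --- is likely required. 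This is also exactly where using the proper $r$-density rather than $m_r(H)$ becomes essential, since we must exclude $F = H$ from the optimization in order to have room to recurse on the remaining ``scales'' of heaviness.
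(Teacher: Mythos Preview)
Your deletion strategy has a genuine gap at exactly the step you flag as ``the main obstacle'': controlling $\sum_{S\text{ heavy}} d_{\cH_0}(S)$. Markov gives you the \emph{number} of $F$-heavy sets, namely $O(|\cH_0|/\tau_F)$, but it tells you nothing about the total mass they carry. By double counting, $\sum_{S:\,\text{copy of }F} d_{\cH_0}(S)$ equals $|\cH_0|$ times the number of copies of $F$ inside $H$, which is $\Theta(|\cH_0|)$ --- not $|\cH_0|/(4c_H)$. There is no a priori reason the heavy sets shouldn't carry essentially all of this mass; in an arbitrary host $G$, the copies of $H$ may all pass through a small collection of heavy subconfigurations, and your ``structural extension-counting argument'' would have to rule this out, which it cannot in general. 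The same issue already bites at $|S|=1$: your cleaning pass deletes the few edges $e$ with $d_{\cH_0}(e)>C_2|\cH_0|/e(G)$, but those few edges may meet almost every copy in $\cH_0$, so $\F$ could collapse.

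The paper avoids this entirely by running the argument in the opposite direction: it \emph{builds} $\F$ from $\emptyset$ one copy at a time, declaring a set $S$ saturated once $d_\F(S)$ reaches $\tfrac{1}{2}C\beta^{|S|-1}N/e(G)$, and showing that as long as $|\F|<N:=\delta f(n,e(G)/8)$ there is always a \emph{good} copy (one containing no saturated set) to add. The existence of good copies is where the subsampling condition on $f$ is actually used, and in a specific way you don't describe: one samples a random $W\subseteq V(G)$ of size $np$ with $p\asymp \beta^{m_r^*(H)}$, observes that each saturated $i$-set survives with probability at most $p^{r+(i-1)/m_r^*(H)}$ (this inequality is precisely where the \emph{proper} $r$-density enters), deletes one edge from each surviving saturated set, and then applies the supersaturation hypothesis to the cleaned $G[W]$. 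Averaging over $W$ via $f(np,mp^r)\geq\delta f(n,m)p^h$ and convexity of $f$ yields $\geq N$ good copies. So the subsampling is not a fallback to ``pass to a cleaner substructure'' inside a deletion scheme; it is the engine of an additive scheme, and that change of direction is what makes the argument go through.
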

\begin{proof}
First we define some constants.
 Let $k_0$ be a sufficiently large constant, depending on $H$ and $A,\al$, such that the statement
after \eqref{p-definition} holds. 
Let $N=\delta f(n,\frac{1}{8} e(G))$. 
By our assumption about $f$ and that $n^\al=e(G)/k$,
\begin{equation} \label{N-lowerbound}
N\geq \delta \cdot \delta (\frac{1}{8} e(G))^{\frac{h-\al}{r-\al}}/n^{\frac{\al(h-r)}{r-\al}}
=\delta^2 (\frac{1}{8})^{\frac{h-\al}{r-\al}} k^{\frac{h-r}{r-\al}} e(G) =\delta' k^{\frac{h-r}{r-\al}} e(G) ,
\end{equation}
where $\delta'=\delta^2 (\frac{1}{8})^{\frac{h-\al}{r-\al}}$. 
Let \begin{equation} \label{constants-definition}
C= \max\{ 2^{\ell+h+2} r^{2h} (8A)^{\frac{h}{r-\al}}\ell, \frac{4}{\delta'} \} .
\end{equation}
Trivially, $C \geq 1$.
For convenience, let 
\begin{equation}\label{beta-definition}
\beta= k^{-\lam^*(\al,H)}
\end{equation}
where $\lam^*(\alpha, H)$ is as defined in condition 2. 

To prove Lemma~\ref{key-lemma}, it suffices to find a family $\F$ with $|\F| = N$ such that the following condition holds \begin{equation} \label{codegree-condition}
\forall S\subseteq E(G), 1\leq |S|\leq \ell-1, d_\F(S)\leq C\beta^{|S|-1} \frac{N}{e(G)}.
\end{equation}

To build such a family, let $\F=\emptyset$.  We show that as long as $|\F|<N$, we can find a new copy of $H$ in $G$ to add to $\F$ so that $\eqref{codegree-condition}$ holds.

Clearly, initially $\F$ satisfies \eqref{codegree-condition}.
Given a set $S\subseteq E(G)$, where $1\leq |S|\leq \ell-1$,
we call $S$ {\it saturated} if  $d_\F(S)\geq C \beta^{|S|-1} \frac{N}{2e(G)}$. 

Recall that $m_r^*(H) = \max_{F \subsetneq H} \{ \frac{e(F) - 1}{v(F) - r}\} \geq  \frac{\ell - 2}{h - r}$.  Hence, we have that $\lam^*(\alpha, H) \leq \frac{(h - r)}{(\ell - 2)( r - \alpha)}$. Combining this with \eqref{N-lowerbound}, we have 

\begin{equation}\label{saturated-lowerbound}
C \beta ^{|S| - 1} \frac{N}{2e(G)} \geq C  k^{-\lam^*(\al, H)(\ell - 2) } \frac{\delta' k^{\frac{h - r}{ r - \al} }e(G)}{2 e (G)}  \geq \frac{C \delta '}{2} \geq 2.
\end{equation}

For each $i=1,\dots, \ell-1$, let $B_i$ denote the family of saturated $i$-subsets of $E(G)$.
We will call a copy of $H$ in $G$ a {\it good} copy of $H$ if it does not contain any member of $\bigcup_{i=1}^{\ell-1} B_i$.

\medskip

{\bf Claim 1.} For each $i=1,\dots, \ell-1$, $|B_i|\leq (2^\ell/C) (1/\beta)^{i-1} e(G)$.

\medskip

{\it Proof of Claim 1.} Let $\mu$ denote the number of pairs $(H',S)$ where $H'$ is a 
member of $\F$ and
$S\in B_i$ and $S\subseteq E(H')$. If we count $\mu$ by $S$, then by definition, 
\[\mu\geq |B_i| C \beta^{i-1} \frac{N}{2e(G)}.\]

On the other hand, if we count $\mu$ by $H'$, then 
\[\mu\leq |\F|\binom{\ell}{i}< 2^{\ell-1} N.\]
The claim follows by combining the last two inequalities and solving for $|B_i|$. 
\hfill $\Box$

Let $q=(8A)^{\frac{1}{r-\al}}r^2 \beta^{m^*_r(H)}$.
Since $\beta = k^{-\lam^*(\al, H)}$, we have
\begin{equation} \label{q-lowerbound}
q\geq (8A)^{\frac{1}{r-\al}} r^2 k^{-\lam^*(\al, H)m^*_r(H)}=  (8A)^{\frac{1}{r-\al}} 
r^2 k^{-\frac{1}{r-\al}}
\geq r^2(n^{r-\al})^{-\frac{1}{r-\al}} =\frac{r^2}{n}.
\end{equation}
Let $p$ be a positive real such that
\begin{equation} \label{p-definition}
q\leq p\leq 2q \quad 
\mbox{ and } \quad np\in \mathbb{Z}^+.
\end{equation}
Since $q\geq r^2/n$, it is easy to see such a $p$ exists. Because $k\geq k_0$,
by choosing $k_0$ to be large enough constant depending on $H$ and $A$, we
can ensure that $2q<1$ and hence $p\in (0,1)$.

Let $W$ be a uniform random subset of $V(G)$ of size $w=np$. By \eqref{q-lowerbound}, $w\geq r^2$.  By Lemma~\ref{binom-estimate},
\begin{equation} \label{GW-expectation}
\E[e(G[W])]=e(G)\frac{\binom{n-r}{np-r}}{\binom{n}{np}}\geq \frac{1}{2} e(G) p^r.
\end{equation}

For each $i=1,\dots, \ell-1$, let $Y_i(W)$ denote the set of members of $B_i$ that are contained in $W$.
Fix any $i=1,\dots, \ell-1$. Consider any member $S$ of $B_i$. Suppose $S$ spans $v_S$ vertices.
Then by the definition of $m^*_r(H)$, we have $\frac{i-1}{v_S-r}\leq m^*_r(H)$ and hence
$v_S\geq r+\frac{i-1}{m^*_r(H)}$. Hence,
\[\Pb[S\subseteq W]=\binom{n-v_S}{np-v_S}/\binom{n}{np}\leq p^{v_S} \leq p^{r+\frac{i-1}{m^*_r(H)}}.\]

This, along with Claim 1, implies that  for each $i=1,\dots, \ell-1$, 

\begin{equation} \label{Yi-expectation}
\E[|Y_i(W)|\leq |B_i| p^{r+\frac{i-1}{m^*_r(H)}} \leq (2^\ell/C) (1/\beta)^{i-1} p^{r+\frac{i-1}{m^*_r(H)}} e(G)
=(2^\ell/C) (1/\beta)^{i-1}p^{\frac{i-1}{m^*_r(H)}}\cdot e(G)p^r.
\end{equation}

Hence, by \eqref{p-definition} and \eqref{Yi-expectation}, for each $i=1,\dots,\ell-1$, we have

\[ \E[|Y_i(W)|]\leq (2^\ell/C) [2(8A)^{\frac{1}{r-\al}}r^2]^{\frac{i-1}{m^*_r(H)}} \cdot e(G)p^r <
(2^\ell/C) [2(8A)^{\frac{1}{r-\al}}r^2]^h \cdot e(G) p^r.
\]
By our choice of $C$, given in \eqref{constants-definition}, we have
\begin{equation} \label{YIW-bound}
\E[|\bigcup_{j=1}^{\ell-1}Y_j(W)|]\leq (\ell-1)   (2^\ell/C)[2(8A)^{\frac{1}{r-\al}}r^2]^h   \cdot e(G) p^r
\leq \frac{1}{4} e(G) p^r.
\end{equation}

By \eqref{GW-expectation} and \eqref{YIW-bound}, 
\begin{equation} \label{surplus-expectation}
\E[e(G[W]) -|\bigcup_{j=1}^{\ell-1}Y_j(W)|]\geq \frac{1}{2}e(G) p^r- \frac{1}{4} e(G) p^r
\geq \frac{1}{4} e(G) p^r,
\end{equation}
Let $t=\binom{n}{np}$ and $W_1,\dots, W_t$ be all the $np$-subsets of $V(G)$.
Let $\I_{good}$ be the set of $i\in [t]$ such that $e(G[W_i]) -|\bigcup_{j=1}^{\ell-1} Y_j(W_i)| 
\geq \frac{1}{8} e(G) p^r$. Let $\I_{bad}=[t]\setminus \I_{good}$.
Then 
\[\sum_{i\in \I_{bad}} [e(G[W_i]) -|\bigcup_{j=1}^{\ell-1} Y_j(W_i)|] \leq \binom{n}{np} \frac{1}{8}e(G) p^r.\]
Hence, by \eqref{surplus-expectation},
\begin{equation} \label{good-surplus}
\sum_{i\in \I_{good}}  e(G[W_i]) -|\bigcup_{j=1}^{\ell-1} Y_j(W_i)| \geq \binom{n}{np} \frac{1}{8}e(G) p^r.
\end{equation}

For each $i\in \I_{good}$, let $G'_i$ be a subgraph of $G[W_i]$ obtained by deleting an edge from 
each member of $\bigcup_{j=1}^{\ell-1} Y_j[W_i]$.  By \eqref{good-surplus}, 
\begin{equation} \label{good-sum}
\sum_{i\in \I_{good}}  e(G'_i)\geq \binom{n}{np} \frac{1}{8}e(G) p^r.
\end{equation}
By the definition of $\I_{good}$, for each $i\in \I_{good}$,
we have
\begin{equation} \label{individual-good}
e(G'_i)\geq \frac{1}{8}e(G) p^r =\frac{1}{8} kn^\alpha p^r =\frac{1}{8} k p^{r-\alpha} (np)^\alpha
=\frac{1}{8} k p^{r-\alpha} w^\alpha.
\end{equation}

By \eqref{q-lowerbound} and \eqref{p-definition},
\[p\geq q\geq (8A)^{\frac{1}{r-\al}} 
r^2 k^{-\frac{1}{r-\al}}.\]

Hence, by \eqref{individual-good}, for each $i\in \I_{good}$,
\[e(G'_i)\geq \frac{1}{8} k (8A)r^{2(r-\al)}k^{-1}w^\al
\geq Aw^\alpha.\]

Hence, by our assumption about $H$, for each $i\in \I_{good}$, $G'_i$ contains at least $f(w,e(G'_i))=f(np, e(G'_i))$ copies of
$H$ in $G$. Now, the crucial observation is that any copy $H'$ of $H$ in $G'_i$, where $i\in \I_{good}$, is a good copy of $H$ in $G$.
Indeed, suppose $H'$ contains a member $S$ of $B_j$ for some $j=1,\dots, \ell-1$, then $S\subseteq E(G'_i)\subseteq E(G[W_i])$.
So $S\in Y_j[W_i]$. But in forming $G'_i$ from $G[W_i]$ we have removed an edge from each member of $Y_j[W_i]$ and hence $S\not\subseteq E(G'_i)$, a contradiction.

Let $\cH_{good}$ denote the family of good copies of $H$ in $G$. By our discussions above, \eqref{good-sum} and 
our assumptions about the function $f$, we have
\begin{eqnarray*}
|\cH_{good}|&\geq&  \frac{1} {\binom{n-h}{np-h}}\sum_{i\in \I_{good}} f(np, e(G'_i))\geq\frac{1} {\binom{n-h}{np-h}}\binom{n}{np} f(np, \frac{1}{8}e(G)p^r)\\
&\geq& (1/p)^h \delta f(n,\frac{1}{8}e(G)) p^h=\delta f(n,\frac{1}{8}e(G)).
\end{eqnarray*}
Hence, $|\cH_{good}|\geq N>|\F|$. So, there must exist a member $H'$ of $\cH_{good}$ that is not in $\F$. Let us add $H'$ to $\F$.
Consider any subset $S\subseteq E(G)$, where $1\leq |S|\leq \ell-1$. If $S$ is not contained in $H'$ then $d_\F(S)$ is unchanged.
If $S\subseteq E(H')$, then since $H'$ is good, $S$ is unsaturated prior to the addition of $H'$ and hence now satisfies
$d_\F(S)\leq C\beta^{|S|-1}\frac{N}{2e(G)}+1\leq C\beta^{|S|-1}\frac{N}{e(G)}$, by \eqref{saturated-lowerbound}. Hence \eqref{codegree-condition} still holds for the new family $\F$.
Thus, we can iterate the process to find a family $\F$ that satisfies \eqref{codegree-condition} and such that $|\F|\geq N$.
\end{proof}

For convenience, we will refer to the $\beta$ associated with the family $\F$ as {\it the codegree ratio} of $\F$.

By Lemma~\ref{basic-supersaturation} and Lemma~\ref{key-lemma}, we obtain the following general balanced supersaturation theorem, which implies our first main theorem, 
Theorem~\ref{main1}.
 
\begin{theorem} \label{balanced-supersat1}
Let $r$ be an integer with $r\geq 2$. Let $H$ be
an $r$-partite $r$-graph with $h$ vertices and $\ell$ edges.
Let $\alpha$ and $A$ be positive reals satisfying that $A\geq r^{2r}$, $\alpha>r-\frac{1}{m_r(H)}$ and that $\ex(n,H)\leq An^\alpha$ for all $n$,
There exist constants $\delta_H,k_0=k_0(H), C=C(H)>0$ such that the following holds.
Let $G$ be an $n$-vertex graph with $kn^\alpha$ edges where $k\geq k_0$.
Then $G$ contains a family $\F$ of copies of $H$ satisfying that
\begin{enumerate}
\item $|\F|\geq \delta_H[e(G)]^{\frac{h-\al}{r-\al}} / n^{\frac{\al(h-r)}{r-\al}}$,
\item  $\forall S\subseteq E(G), 1\leq |S|\leq \ell$ \[d_\F(S)\leq C k^{-\lam(\al, H)(|S|-1)}\frac{|\F|}{e(G)},\]
where 
$\lam(\al, H)=\frac{1}{m_r(H)(r-\al)}$ and $m_r(H)$ is the $r$-density of $H$.
\end{enumerate}
\end{theorem}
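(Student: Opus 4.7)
The plan is to plug the copy-count estimate of Lemma~\ref{basic-supersaturation} into the Key Lemma~\ref{key-lemma}. Define
\[ f(n,m)=c_H\, m^{(h-\alpha)/(r-\alpha)}/ n^{\alpha(h-r)/(r-\alpha)}, \]
where $c_H$ is the constant supplied by Lemma~\ref{basic-supersaturation}, so that every $n$-vertex $r$-graph with $m\geq An^\alpha$ edges contains at least $f(n,m)$ copies of $H$. Hypothesis (2) of the Key Lemma (monotonicity and convexity in $m$) is immediate because $(h-\alpha)/(r-\alpha)\geq 1$, while for hypothesis (1) a direct algebraic computation gives the identity $f(np,mp^r)=f(n,m)\,p^h$ exactly, so the scaling condition holds with $\delta=1$; the lower bound part of (1) holds by construction. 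Applying Lemma~\ref{key-lemma} therefore produces a family $\F$ with
\[ |\F|\geq \delta\, f\!\left(n,\tfrac{1}{8}e(G)\right)\geq \delta_H\, [e(G)]^{(h-\alpha)/(r-\alpha)}/n^{\alpha(h-r)/(r-\alpha)}, \]
satisfying the codegree bound with exponent $\lambda^*(\alpha,H)$ for every $S$ with $1\leq |S|\leq \ell-1$.

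Two small upgrades then finish the proof. Since $m_r(H)\geq m^*_r(H)$ gives $\lambda(\alpha,H)\leq \lambda^*(\alpha,H)$, replacing $\lambda^*$ by $\lambda$ only weakens the codegree bound, so it continues to hold for $1\leq |S|\leq \ell-1$. For the endpoint $|S|=\ell$, the Key Lemma gives no information, but the trivial bound $d_\F(S)\leq 1$ suffices: taking $F=H$ in the definition of the $r$-density yields $m_r(H)\geq (\ell-1)/(h-r)$, hence $\lambda(\alpha,H)(\ell-1)\leq (h-r)/(r-\alpha)$. Combined with the identity $|\F|/e(G)\geq \delta_H k^{(h-r)/(r-\alpha)}$, which follows from the lower bound on $|\F|$ together with $e(G)=kn^\alpha$, this yields $Ck^{-\lambda(\alpha,H)(\ell-1)}|\F|/e(G)\geq C\delta_H\geq 1$ once $C$ is enlarged in terms of $\delta_H$.

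The main conceptual point, and the only place where any care is needed, is the last step: it is precisely the fact that $m_r(H)$ (unlike $m^*_r(H)$) is defined using all nonempty subgraphs of $H$, including $H$ itself, that makes $\lambda(\alpha,H)(\ell-1)\leq (h-r)/(r-\alpha)$ and thereby matches the $|S|=\ell$ bound to the lower bound on $|\F|$. This explains why Theorem~\ref{balanced-supersat1} is stated with the full $r$-density rather than the stronger-looking proper $r$-density furnished by the Key Lemma. Beyond this observation, no real obstacle remains; the argument is essentially a packaging of Lemmas~\ref{basic-supersaturation} and \ref{key-lemma}, with all of the genuine work having already been carried out there.
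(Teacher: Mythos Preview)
Your proposal is correct and follows essentially the same route as the paper: plug the count from Lemma~\ref{basic-supersaturation} in as the function $f$, verify the hypotheses of the Key Lemma, weaken $\lambda^*$ to $\lambda$ using $m_r(H)\geq m^*_r(H)$, and handle $|S|=\ell$ via the trivial bound $d_\F(S)\leq 1$ together with $m_r(H)\geq(\ell-1)/(h-r)$. The paper packages the last step slightly differently, writing $\beta=\max\{(e(G)/|\F|)^{1/(\ell-1)},k^{-\lambda}\}$ and then bounding the first term, but this is the same computation in disguise. One tiny quibble: you cannot take $\delta=1$ in hypothesis~(1) of the Key Lemma unless $c_H\geq 1$; take $\delta=\min(c_H,1)$ instead, which changes nothing else.
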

\begin{proof}
By choosing $k_0$ to be at least $2^{3r}$, by Lemma~\ref{basic-supersaturation}, $H$ has the
property that every $n$-vertex graph with $kn^\al$ edges, where $k\geq k_0$ contains
at least  $c_H[e(G)]^{\frac{h-\al}{r-\al}} / n^{\frac{\al(h-r)}{r-\al}}$ copies of $H$.
Let $f(n,m)= c_Hm^{\frac{h-\al}{r-\al}} / n^{\frac{\al(h-r)}{r-\al}}$.
It is straightforward to  see that $ f(np,mp^r)=f(n,m) p^h$, for any $p$.
Furthermore, for fixed $n$, $f(n,m)$ is clearly increasing and convex in $m$.
By Lemma~\ref{key-lemma},  there exist constants $k_0$ and $C'$ such that for every $n$ if $G$ is an $n$-vertex $r$-graph with $kn^\alpha$ edges, where $k\geq k_0$, then $G$ contain a family $\F$ of copies of $H$ satisfying
\begin{enumerate}
\item [{\rm (a)} ] $|\F|\geq c_H f(n,\frac{1}{8}e(G))$.
\item [{\rm (b) }] $\forall S\subseteq E(G), 1\leq |S|\leq \ell $, \[d_\F(S)\leq C' \beta^{|S|-1}\frac{|\F|}{e(G)},\]
where 
\[\beta=\max\left\{ \left(\frac{e(G)}{|\F|}\right)^{\frac{1}{\ell-1}}, k^{-\lam(\al, H)}\right\},\]
$\lam(\al, H)=\frac{1}{m_r(H)(r-\al)}$. Note that here we used the fact that $\lambda(\alpha, H) \leq \lambda^*(\alpha, H)$, as to make the conclusion work for sets of size $\ell$, we switch to $\lambda$.
\end{enumerate}

Since $e(G)=kn^\al$, by condition (a) above, 
\begin{equation} \label{F-lowerbound}
|\F|\geq c_H [e(G)]^{\frac{h-\al}{r-\al}} / n^{\frac{\al(h-r)}{r-\al}}= 
c_H k^{\frac{h-r}{r-\al}} e(G).
\end{equation}
For convenience, we may further assume that $c_H<1$.
By definition of $r$-density, $\frac{\ell-1}{h-r}\leq m_r(H)$. Hence, by \eqref{F-lowerbound},
\begin{equation} \label{MF-upperbound}
\left(\frac{e(G)}{|\F|}\right)^{\frac{1}{\ell-1}} \leq (c_H^{-1} k^{-\frac{h-r}{r-\al}})^{\frac{1}{\ell-1}} \leq (c_H)^{-\frac{1}{\ell-1}}
k^{-\lam(\al,H)}.
\end{equation}

By the definition of $\beta$ in condition (b) and \eqref{MF-upperbound}, 

\[\beta\leq c_H^{-\frac{1}{\ell-1}}  k^{-\lam(\al, H)}.\]

Let $C=C' (c_H)^{-1}$. By condition (b) above, we have
$\forall S\subseteq E(G), 1\leq |S|\leq \ell$, 
\[d_\F(S)\leq C' \beta^{|S|-1}\frac{|\F|}{e(G)}
\leq C' (c_H^{-\frac{|S|-1}{\ell-1}}  k^{-\lam(\al, H)( |S|-1)|}) \frac{|\F|}{e(G)}
\leq C k^{-\lam(\al, H)(|S|-1)|} \frac{|\F|}{e(G)}.\]
So, the theorem holds with $\delta_H=c_H$.
\end{proof}

If one applies Theorem~\ref{balanced-supersat1} to an adaption of Proposition~\ref{MS-proposition}, one can retrieve
one of the enumeration results of Ferber, McKinley and Samotij \cite{FMS} as below.  The difference is that Ferber, McKinley and Samotij
used a weaker version of balanced supersaturation.

\begin{corollary}
Let $H$ be an $r$-uniform hypergraph and let $\al$ and $A$ be positive constants. Suppose that 
$\al>r-\frac{1}{m_r(H)}$ and that $\ex(n,H)\leq An^\al$ for all $n$. Then there exists a constant $C$ depending 
only on $\al, A$, and $H$ such that for all $n$, there are at most $2^{Cn^\al}$ $H$-free $r$-uniform hypergraphs on $n$ vertices.
\end{corollary}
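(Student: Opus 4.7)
The plan is to derive the corollary by combining Theorem \ref{balanced-supersat1} with the hypergraph container method, as a hypergraph analogue of Proposition \ref{MS-proposition}. Given any $H$-free $r$-graph $G$ on $[n]$ with $e(G) \geq k_0 n^\alpha$, Theorem \ref{balanced-supersat1} applied with $k = e(G)/n^\alpha$ produces a family $\F$ of copies of $H$ in $G$ satisfying the balanced codegree bound $d_\F(S) \leq C' k^{-\lambda(\alpha, H)(|S|-1)} |\F|/e(G)$. Viewing $\F$ as the edge set of an auxiliary $\ell$-uniform hypergraph on vertex set $E(G)$, this is precisely the degree hypothesis required by the hypergraph container lemma of Balogh-Morris-Samotij \cite{BMS} or Saxton-Thomason \cite{ST}, and the $H$-freeness of $G$ ensures that $E(G)$ is an independent set in this auxiliary hypergraph.

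First I would apply the container lemma to obtain a small family of containers $C \subseteq E(G)$, each with $|C| \leq (1 - \tau) e(G)$, together with an injective fingerprint map $G \mapsto T(G) \subseteq E(G)$ with $|T(G)| = O(k^{-\lambda(\alpha, H)} e(G)) = O(n^\alpha \cdot k^{1-\lambda(\alpha,H)})$, such that $E(G) \subseteq T(G) \cup C$ for some container $C$. I would then iterate this step, replacing $E(G)$ with the current container at each stage and stopping once the container has at most $k_0 n^\alpha$ edges. Because $\lambda(\alpha, H) > 1$ (which is equivalent to the hypothesis $\alpha > r - 1/m_r(H)$), the fingerprint sizes form a geometric series summing to $O(n^\alpha)$, and the final container contributes another $O(n^\alpha)$ bits of freedom. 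Counting all possible concatenated fingerprints together with all $H$-free subgraphs of the final container then yields the bound $2^{O(n^\alpha)}$ on $|\{G \subseteq K_n^{(r)} : G \text{ is $H$-free}\}|$.

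The main technical obstacle is verifying that the balanced codegree condition produced by Theorem \ref{balanced-supersat1} feeds correctly into the black-box container lemma so that the fingerprint sizes shrink at the promised geometric rate, and that the iteration terminates after $O(1)$ steps with fingerprint budget $O(n^\alpha)$. This is precisely the setup used in Morris-Saxton \cite{MS} and in Ferber-McKinley-Samotij \cite{FMS} (the latter with a weaker supersaturation input), and the bookkeeping carries over to $r$-uniform hypergraphs without essential modification since both Theorem \ref{balanced-supersat1} and the container lemma are stated in the $r$-uniform generality. No new ideas beyond the proof of Theorem \ref{balanced-supersat1} are required.
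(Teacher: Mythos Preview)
Your approach is the same one the paper intends: combine Theorem~\ref{balanced-supersat1} with the container iteration exactly as in Proposition~\ref{MS-proposition} (this is all the paper says). Two slips in your write-up are worth fixing.

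First, in your opening paragraph you apply the supersaturation theorem to an $H$-free graph $G$. As written this is vacuous: if $G$ is $H$-free it contains no copies of $H$, so $\F=\emptyset$. The supersaturation theorem must be applied to the current \emph{container} (initially $G_0=K_n^{(r)}$), not to the $H$-free graph you are trying to locate; the $H$-free graphs are then the independent sets of the auxiliary $\ell$-uniform hypergraph built on $E(G_0)$. Your second paragraph gets this right, so the error is in the exposition rather than the idea, but you should correct the first paragraph accordingly.

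Second, the iteration does not terminate in $O(1)$ steps: shrinking from $\binom{n}{r}$ edges down to $k_0 n^{\alpha}$ by a fixed factor $(1-\eps)$ at each step takes $\Theta(\log n)$ rounds. This does not affect the outcome, since what controls the count is the total fingerprint size $\sum_j |T_j| = O\!\big(n^\alpha \sum_j k_j^{\,1-\lambda(\alpha,H)}\big)$, which is $O(n^\alpha)$ by the geometric decay you already identified (using $\lambda(\alpha,H)>1$); the extra $n^{O(\log n)}$ factor from the number of rounds is absorbed. Just replace ``$O(1)$ steps'' with ``$O(\log n)$ steps'' and the argument goes through.
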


Next, we  give a stronger balanced supersaturation theorem for Erd\H{o}s-Simonovits good bipartite graphs. 
This implies our second main result, Theorem~\ref{main2}.

\begin{theorem} \label{balanced-supersat2}
Let $H$ be a bipartite graph with $h$ vertices and $\ell$ edges.
Suppose that $H$ is Erd\H{o}s-Simonovits good.
Let $\alpha$ and $A$ be positive reals satisfying that $A\geq 16$, $\alpha>2-\frac{1}{m_2(H)}$ and that $\ex(n,H)\leq An^\alpha$ for all $n$. There exist constants $\delta_H,k_0=k_0(H), C=C(H)>0$ such that the following holds.
Let $G$ be an $n$-vertex graph with $kn^\alpha$ edges where $k\geq k_0$.
Then $G$ contains a family $\F$ of copies of $H$ satisfying that
\begin{enumerate}
\item $|\F|\geq \delta_H [e(G)]^{\ell}/n^{2\ell-h}$,
\item  $\forall S\subseteq E(G), 1\leq |S|\leq \ell$, \[d_\F(S)\leq C \beta^{|S|-1}\frac{|\F|}{e(G)},\]
where 
\[\beta= \max\{ k^{-1} n^{-\phi(\al, H)}, k^{-\lam^*(\al, H)}\},\]
$\phi(\al, H)=\frac{\al\ell-\al+h-2\ell}{\ell-1}$,
$\lam^*(\al,H)=\frac{1}{m^*_2(H)(2-\al)}$ and $m^*_2(H)$ is the proper $2$-density of $H$.
\end{enumerate}
\end{theorem}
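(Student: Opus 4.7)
My plan is to deduce Theorem~\ref{balanced-supersat2} by feeding Lemma~\ref{key-lemma} the stronger counting function supplied by the Erd\H{o}s--Simonovits-good hypothesis. Specifically, I would set $f(n,m) = c_{\mathrm{ES}} m^{\ell}/n^{2\ell - h}$, where $c_{\mathrm{ES}}$ is the constant from Conjecture~\ref{ES-conjecture-supersat}. After enlarging $A$ if necessary to absorb the threshold constant in the definition of Erd\H{o}s--Simonovits-good, every $n$-vertex graph with $m \geq An^{\alpha}$ edges contains at least $f(n,m)$ copies of $H$. The theorem will then follow by simplifying the conclusion of the Key Lemma and extending the codegree bound from $|S|\leq \ell - 1$ (which the Key Lemma covers) to $|S|=\ell$ (which it does not).

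I would next verify the three hypotheses of Lemma~\ref{key-lemma}. The scaling identity $f(np, mp^{2}) = p^{h}f(n,m)$ holds by a direct calculation, and the monomial form makes $f(n,\cdot)$ increasing and convex for $\ell\geq 2$. The one genuine check is the lower-bound hypothesis $f(n,m) \geq \delta\, m^{(h-\alpha)/(2-\alpha)}/n^{\alpha(h-2)/(2-\alpha)}$. Clearing exponents, this reduces to $(m/n^{2})^{a} \geq \delta/c_{\mathrm{ES}}$, where $a = \ell - (h-\alpha)/(2-\alpha)$ simplifies to $a = [(2\ell - h) - \alpha(\ell-1)]/(2-\alpha)$. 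Taking $F=H$ in the definition of $m_{2}(H)$ yields $m_{2}(H) \geq (\ell-1)/(h-2)$, so the assumption $\alpha > 2 - 1/m_{2}(H)$ forces $\alpha > (2\ell - h)/(\ell - 1)$, which is precisely the statement $a < 0$. Since $m \leq \binom{n}{2} \leq n^{2}$, $(m/n^{2})^{a}\geq 1$, and the condition holds for any $\delta \leq c_{\mathrm{ES}}$.

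Applying Lemma~\ref{key-lemma} produces a family $\F$ with $|\F| \geq \delta f(n, e(G)/8) \geq \delta_{H}[e(G)]^{\ell}/n^{2\ell - h}$, which is conclusion~(1) of the theorem, together with $d_{\F}(S) \leq C' k^{-\lambda^{*}(\alpha, H)(|S|-1)}|\F|/e(G)$ for every $S$ with $1\leq |S|\leq \ell - 1$. Since the second term in the definition of $\beta$ already satisfies $\beta \geq k^{-\lambda^{*}(\alpha,H)}$, this directly translates into the desired bound $d_{\F}(S) \leq C'\beta^{|S|-1}|\F|/e(G)$ in that range. The $k^{-1}n^{-\phi(\alpha,H)}$ term in the definition of $\beta$ is tailored to handle the remaining case $|S| = \ell$: for such $S$ one has $d_{\F}(S)\leq 1$, and the lower bound on $|\F|$ together with $e(G)=kn^{\alpha}$ gives $|\F|/e(G) \geq \delta_{H} k^{\ell - 1} n^{\phi(\alpha, H)(\ell - 1)}$ (using $\phi(\alpha,H) = \alpha - (2\ell - h)/(\ell - 1)$). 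Hence $\beta^{\ell - 1} \geq k^{-(\ell - 1)} n^{-\phi(\alpha, H)(\ell - 1)}$ forces $C\beta^{\ell - 1}|\F|/e(G) \geq C\delta_{H} \geq 1$ once $C$ is large enough, so the desired inequality holds trivially for $|S|=\ell$ too.

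The main substantive point of the proof, and the only place where one must do more than unwind definitions, is the exponent inequality $a<0$ justifying the lower-bound hypothesis of the Key Lemma. This is exactly where the hypothesis $\alpha > 2 - 1/m_{2}(H)$ is used, and it reflects the conceptual fact that the Erd\H{o}s--Simonovits bound $m^{\ell}/n^{2\ell - h}$ dominates the basic probabilistic count $m^{(h-\alpha)/(2-\alpha)}/n^{\alpha(h-2)/(2-\alpha)}$ precisely in the regime where $m$ is sufficiently larger than $n^{2 - 1/m_{2}(H)}$. Everything else is bookkeeping: mirroring the proof of Theorem~\ref{balanced-supersat1}, in which the same strategy was executed with the weaker function coming from Lemma~\ref{basic-supersaturation}.
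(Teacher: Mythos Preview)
Your proposal is correct and follows essentially the same route as the paper: apply the Key Lemma with $f(n,m)=c_H m^\ell/n^{2\ell-h}$, verify the lower-bound hypothesis via the exponent inequality coming from $\alpha>2-1/m_2(H)\geq (2\ell-h)/(\ell-1)$, and then extend the codegree bound to $|S|=\ell$ using $d_\F(S)\leq 1$ together with the lower bound $|\F|/e(G)\geq \delta_H k^{\ell-1}n^{\phi(\alpha,H)(\ell-1)}$. The paper packages the last step slightly differently---it introduces an intermediate $\hat\beta=\max\{(e(G)/|\F|)^{1/(\ell-1)},k^{-\lambda^*(\alpha,H)}\}$ and then bounds the first term by $c_H^{-1/(\ell-1)}k^{-1}n^{-\phi(\alpha,H)}$---but this is exactly your $|S|=\ell$ argument in disguise.
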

\begin{proof}
Since $H$ is Erd\H{o}s-Simonovits good, there exist constants $c_H, k_1>0$ such that
 $n$-vertex graph with $m=kn^\al$ edges, where $k\geq k_1$ contains
at least  $c_Hm^\ell/n^{2\ell-h}$ copies of $H$.
Let $f(n,m)= c_Hm^\ell / n^{2\ell-h}$. Using the facts $\al\geq 2-\frac{1}{m_r}\geq 2-\frac{h-2}{\ell-1}$
and $k\leq 2-\al$, one can show that
$f(n,m)\geq c_H m^{\frac{h-\al}{2-\al}}/n^{\frac{\al(h-2)}{2-\al}}$.
Also, it is straightforward to  see that $ f(np,mp^2)=f(n,m) p^h$, for any $p$.
Furthermore, for fixed $n$, $f(n,m)$ is clearly increasing and convex in $m$.
By Lemma~\ref{key-lemma},  there exist constants $k_0$ and $C'$ such that if $G$ is an $n$-vertex $r$-graph with $kn^\alpha$ edges, where $k\geq k_0$, then $G$ contain a family $\F$ of copies of $H$ satisfying
\begin{enumerate}
\item [{\rm (a)} ] $|\F|\geq c_H f(n,\frac{1}{8}e(G))$.
\item [{\rm (b) }] $\forall S\subseteq E(G), 1\leq |S|\leq \ell$, \[d_\F(S)\leq \hc\hb^{|S|-1}\frac{|\F|}{e(G)},\]
where 
\[\hb=\max\left\{ \left(\frac{e(G)}{|\F|}\right)^{\frac{1}{\ell-1}}, k^{-\lam^*(\al, H)}\right\},\]
$\lam^*(\al,H)=\frac{1}{m^*_2(H)(2-\al)}$ and
$m^*_2(H)$ is the proper $2$-density of $H$.
\end{enumerate}

By condition (a) above, 
\begin{equation} \label{F-lowerbound2}
|\F|\geq c_H [e(G)]^{\ell}/n^{2\ell-h}.
\end{equation} 
For convenience, we may further assume that $c_H<1$.
Hence,
\[|\F|/e(G)\geq c_H [e(G)]^{\ell-1}/n^{2\ell-h}=c_H k^{\ell-1}n^{\al\ell-\al+h-2\ell}
=c_H k^{\ell-1}n^{\phi(\al, H)(\ell-1)}.\]
Hence, by the definition of $\hb$ given in condition (b) above,
\[\hb\leq \max \{c_H^{-\frac{1}{\ell-1}}k^{-1} n^{-\phi(\al, H)}, k^{-\lam^*(\al, H)}\}.\]
Let 
\[\beta=\max\{ k^{-1} n^{-\phi(\al, H)}, k^{-\lam^*(\al, H)}\}
\quad \mbox{ and } \quad C=\hc\cdot c_H^{-1}.\]
It is straightforward to verify that condition 2 holds for these choices of $\beta$ and $C$.
So the theorem holds with $\delta_H=c_H$.
\end{proof}

The advantage of Theorem~\ref{balanced-supersat2} over Theorem~\ref{balanced-supersat1} is
that for Erd\H{o}s-Simonovits good $H$, the former produces a $\beta$ value that is no larger than the
former and in many cases produces a smaller $\beta$, at least for a suitable range of $k$.
Next, we show that the method developed in this section allows us to give a short proof of Theorem~\ref{MS-cycle-supersat} 
of Morris and Saxton \cite{MS}.

\begin{theorem} [Restatement of Theorem~\ref{MS-cycle-supersat}] \label{c2ell-supersat}
For every $\ell\geq 2$, there exist constants $C>0, \delta>0$ and $k_0\in \mathbb{N}$
such that for each $n\in \mathbb{N}$ if $G$ is an $n$-vertex graph with $kn^{1+1/\ell}$ edges,
where $k_0\leq k$, then there exists a collection $\F$ of copies of $C_{2\ell}$ in $G$ such that
\begin{enumerate}
\item[{\rm (a) }] $|\F|\geq \delta k^{2\ell} n^2$,
\item[{\rm (b) }] $\forall S\subseteq E(G), 1\leq |S|\leq 2\ell - 1$, $d_{\F}(S)\leq Ck^{2\ell-|S|-\frac{|S|-1}{\ell-1}}
n^{1-1/\ell}$.
\end{enumerate}
\end{theorem}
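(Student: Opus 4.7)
The plan is to deduce Theorem~\ref{c2ell-supersat} directly from Lemma~\ref{key-lemma} by specializing to $H = C_{2\ell}$, with the classical even-cycle supersaturation theorem of Bondy-Simonovits / F\"uredi-Simonovits playing the role of the function $f$.

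First I would record the relevant parameters for $H = C_{2\ell}$: $h = 2\ell$, $e(H) = 2\ell$, $\alpha = 1 + 1/\ell$ (so that $\ex(n, C_{2\ell}) \leq An^{\alpha}$ by Bondy-Simonovits), and $m_2^*(C_{2\ell}) = 1$ (every proper subgraph of $C_{2\ell}$ is a disjoint union of paths, each of which has $(e(F) - 1)/(v(F) - 2) \leq 1$). Consequently
\[
\lambda^*(\alpha, C_{2\ell}) \;=\; \frac{1}{m_2^*(C_{2\ell})(2 - \alpha)} \;=\; \frac{\ell}{\ell - 1}.
\]
Since $C_{2\ell}$ is Erd\H{o}s-Simonovits good (cf.~\cite{FS,MS}), there is a constant $c_H > 0$ such that every $n$-vertex graph with $m \geq An^{1+1/\ell}$ edges contains at least $c_H m^{2\ell}/n^{2\ell}$ copies of $C_{2\ell}$. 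I would take
\[
f(n, m) \;=\; c_H\, m^{2\ell}/n^{2\ell}
\]
and verify the two hypotheses of Lemma~\ref{key-lemma}: monotonicity and convexity in $m$ are trivial; $f(np, mp^2) = f(n, m)\, p^{2\ell} = f(n, m)\, p^{h}$ is a direct computation; and the lower bound $f(n, m) \geq \delta m^{(h-\alpha)/(2 - \alpha)}/n^{\alpha(h - 2)/(2 - \alpha)}$, which for $C_{2\ell}$ simplifies to $c_H m^{2\ell}/n^{2\ell} \geq \delta m^{2\ell + 1}/n^{2\ell + 2}$, reduces after cancellation to $m \leq (c_H/\delta)\, n^2$ and hence holds for any $\delta \leq c_H$ using $m \leq n^2$.

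Lemma~\ref{key-lemma} then produces, for $k \geq k_0$, a family $\F$ of copies of $C_{2\ell}$ with $|\F| \geq \delta f(n, e(G)/8)$. A direct substitution of $e(G) = kn^{1 + 1/\ell}$ gives
\[
|\F| \;\geq\; \delta\, c_H\, \frac{(kn^{1+1/\ell}/8)^{2\ell}}{n^{2\ell}} \;=\; \delta' k^{2\ell} n^2,
\]
which is part (a) of the theorem. Furthermore, for every $S \subseteq E(G)$ with $1 \leq |S| \leq 2\ell - 1 = e(H) - 1$, the conclusion of Lemma~\ref{key-lemma} gives
\[
d_{\F}(S) \;\leq\; C'\, k^{-\lambda^*(\alpha, C_{2\ell})(|S| - 1)} \cdot \frac{|\F|}{e(G)} \;=\; C'\, k^{-\ell(|S| - 1)/(\ell - 1)} \cdot \frac{|\F|}{e(G)}.
\]

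Since the iterative construction in Lemma~\ref{key-lemma} stops as soon as $|\F|$ reaches the target $N = \delta f(n, e(G)/8) = \Theta(k^{2\ell} n^2)$, we have $|\F|/e(G) = O(k^{2\ell - 1} n^{1 - 1/\ell})$. Using the elementary identity
\[
\frac{\ell(|S| - 1)}{\ell - 1} \;=\; (|S| - 1) + \frac{|S| - 1}{\ell - 1},
\]
the exponent of $k$ in the codegree bound becomes $(2\ell - 1) - (|S| - 1) - (|S| - 1)/(\ell - 1) = 2\ell - |S| - (|S| - 1)/(\ell - 1)$, which matches (b) exactly. There is no substantive obstacle beyond bookkeeping; the only delicate point is the first condition of Lemma~\ref{key-lemma}, which is why we must invoke the stronger Erd\H{o}s-Simonovits-good supersaturation (the naive bound of Lemma~\ref{basic-supersaturation}, which would feed Theorem~\ref{balanced-supersat1}, gives a weaker family and a codegree ratio involving $\lambda(\alpha, C_{2\ell}) = (2\ell - 2)/\ell < \lambda^*$).
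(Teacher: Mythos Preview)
Your proposal is correct and follows essentially the same route as the paper: apply Lemma~\ref{key-lemma} with $f(n,m)=c_H(m/n)^{2\ell}$ coming from the Erd\H{o}s--Simonovits-good supersaturation of $C_{2\ell}$, compute $m_2^*(C_{2\ell})=1$ and $\lambda^*(\alpha,C_{2\ell})=\ell/(\ell-1)$, and read off (a) and (b). You supply more detail than the paper (the verification of the hypotheses on $f$ and the observation that the construction in Lemma~\ref{key-lemma} terminates with $|\F|=N=\Theta(k^{2\ell}n^2)$, which is what converts the ratio $|\F|/e(G)$ into the explicit $k^{2\ell-1}n^{1-1/\ell}$), but the argument is the same.
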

\begin{proof}
It is well known that $C_{2\ell}$ is Erd\H{o}s-Simonovits good for $\al=1+1/\ell$ and some $A>0$, so we can apply Lemma~\ref{key-lemma} with $f(n, m) = c \left(\frac{m}{n}\right)^{2\ell}$ with $c$ some positive constant depending only on $C_{2 \ell}$. Let $G$ be an $n$-vertex graph with $kn^{\al}$ edges, with 
$k_0\leq k$. Let $\F$ be the family of copies of $H$ in $G$
guaranteed by Lemma~\ref{key-lemma}.
 One can check that $m^*_2(C_{2\ell})=1$
and hence $\lam^*(\al, C_{2\ell})=\frac{1}{1\cdot (2-\al)}=\frac{\ell}{\ell-1}$. 
Condition (a), (b) readily follow from conditions (1),(2) of $\F$
guaranteed in Lemma~\ref{key-lemma}.
\end{proof}



\section{Applications to the Tur\'an problem in random graphs} 
\label{sec:random-Turan}

In this section, we apply our balanced supersaturation results to obtain some general
bounds on the Tur\'an number of a bipartite graph $H$ in the Erd\H{o}s-R\'enyi random graph $G(n,p)$.
In fact, once we have Theorem~\ref{balanced-supersat1}
and Theorem~\ref{balanced-supersat2},  the corresponding random Tur\'an results will readily follow
using the framework set up by Morris and Saxton \cite{MS}. Nevertheless, for completeness, we will include all the
technical details. The framework is based on the container method pioneered by Balogh, Morris, and Samotij \cite{BMS}
and independently by Saxton and Thomason \cite{ST}.

\begin{definition} \label{codegree-definition}
Given an $r$-graph $\F$, define the {\it co-degree  function} of $\F$
\[\delta(\F,\tau)=\frac{1}{|\F|}\sum_{j=2}^r \frac{1}{\tau^{j-1}}\sum_{v\in V(\F)} d^{(j)}(v),\]
where 
\[d^{(j)}(v)=\max\{ d_{\F} (S): v\in S\subseteq v(\F) \mbox{ and } |S|=j\}.\]
\end{definition}

We need the following theorem from 
Morris and Saxton \cite{MS}, which is a quick consequence of analogous theorems of Balogh, Morris and Samotij
\cite{ST} Theorem 6.2 and of Saxton and Thomason \cite{BMS} Proposition 3.1.

\begin{theorem} \label{container-lemma}
Let $r\geq 2$ be an integer. Let $0<\delta<\delta_0(r)$ be a sufficiently small real. Let $\F$ be an $r$-graph
with $N$ vertices. Suppose that $\delta(\F, \tau)\leq \delta$ for some $\tau>0$. Then there exists a collection
$\cC$ of subsets of $V(\F)$ and a function $f:V(\F)^{(\leq \tau N/\delta)}  \to \cC$ such that
\begin{enumerate} 
\item [{\rm (a)}] For every independent set $I$ in $\F$ there exists $T\subset I$ with $|T|\leq \tau N/\delta$
and $I\subset f(T)$. 
\item [{\rm (b)}] $e(\F[C])\leq (1-\delta) e(\F)$ for every $C\in \cC$.
\end{enumerate}
\end{theorem}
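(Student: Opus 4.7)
The plan is to derive Theorem~\ref{container-lemma} by directly invoking one of the standard hypergraph container lemmas --- Theorem 6.2 of~\cite{ST} or Proposition 3.1 of~\cite{BMS} --- and verifying that the hypothesis $\delta(\F,\tau)\le\delta$ stated here matches the co-degree input those results require. Each of those lemmas assumes a bound on a weighted sum of the form $\sum_{j\geq 2}\tau^{-(j-1)}\,d^{(j)}(v)$, either vertex-wise or averaged over $V(\F)$, and the quantity $\delta(\F,\tau)$ of Definition~\ref{codegree-definition} is exactly this sum, averaged over $v$ and divided by $|\F|$. So up to constants the two hypotheses agree, and the task is essentially one of translation.

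First, I would line up the normalizations. If the cited version phrases its hypothesis pointwise in $v$, an averaging/discarding argument (delete those vertices whose pointwise weighted sum is too large; they account for only a constant fraction of the edges) reduces from the averaged bound of Definition~\ref{codegree-definition} to the pointwise one, with the lost factor absorbed into the choice of the threshold $\delta_0(r)$. If the cited version is already averaged, the matching is direct. Either way, one may apply the container lemma to $\F$ with the given $\tau$ and with a parameter $\delta'$ differing from $\delta$ by a dimension-dependent constant.

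Invoking the container lemma then yields a collection of fingerprints $T\subseteq V(\F)$ of size $O_r(\tau N)$ and a function $f$ sending each such $T$ to a container $f(T)\subseteq V(\F)$ with the two guarantees that (i) every independent set $I$ of $\F$ contains some $T\subseteq I$ with $I\subseteq f(T)$, and (ii) $e(\F[f(T)])\le (1-c_r\delta)\,e(\F)$ for a constant $c_r>0$ depending only on $r$. Choosing $\delta_0(r)$ small enough to absorb both the $O_r(1)$ constant implicit in $|T|\le O_r(\tau N)$ into the clean bound $|T|\le \tau N/\delta$, and the constant $c_r$ into a clean factor $(1-\delta)$, yields the statement of Theorem~\ref{container-lemma} with $\cC$ taken to be the image of $f$. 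The only real obstacle is the bookkeeping needed to reconcile the differing normalizations used in~\cite{ST} and~\cite{BMS} with Definition~\ref{codegree-definition}, which is precisely why Morris and Saxton~\cite{MS} describe the result as a quick consequence of the container method rather than proving it from scratch.
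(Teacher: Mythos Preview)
Your proposal is correct and matches the paper's treatment: the paper does not prove Theorem~\ref{container-lemma} at all, but simply quotes it from Morris and Saxton~\cite{MS} as a quick consequence of the container lemmas of Balogh--Morris--Samotij and Saxton--Thomason. Your sketch of how to extract it from those results (matching the co-degree hypothesis of Definition~\ref{codegree-definition} to the inputs of the cited lemmas and absorbing constants into $\delta_0(r)$) is exactly the intended derivation.
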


\begin{proposition} \label{container-prop}
Let $H$ be a bipartite graph with $h$ vertices and $\ell$ edges. 
Let $\alpha, A$ be positive reals satisfying that for each $n\in\mathbb{N}$  every 
$n$-vertex graph $G$ with $m\geq An^\alpha$ edges contains  at least $f(n,m)$ copies of $H$,
where $f$ is a function satisfying the following.
\begin{enumerate}
\item There is a constant $\delta>0$ such that for all $p\in (0,1]$ and  all positive reals $n,m$ 
\[ f(n,m)\geq \delta m^{\frac{h-\al}{2-\al}} / n^{\frac{\al(h-2)}{2-\al}} \quad \mbox{ and  }  \quad f(np,mp^2)\geq \delta  f(n,m) p^h.\]
\item For fixed $n$, $f(n,m)$ is increasing and convex in $m$.
\end{enumerate}
Let $k_0, C$, $\F$ and $\beta$ be as guaranteed by Lemma~\ref{key-lemma}.
 There exist $k^*_0\in \mathbb{N}$ 
and a real $\eps>0$ such that the following holds for every $k\geq k^*_0$ and every $n\in \mathbb{N}$.
Set
\[\mu=\frac{k\beta}{\eps}. \] 
Given a graph $G$ with $n$ vertices and $kn^{\al}$ edges, there exists a 
function $f_G$ that maps subgraphs of $G$ to subgraphs of $G$ such that for every
$H$-free subgraph $I\subset G$,
\begin{enumerate}
\item[{\rm (a)} ]There exists a subgraph $T=T(I)\subset I$ with $e(T)\leq \mu n^\al$ and $I\subset f_G(T)$, and
\item[{\rm (b)} ] $e(f_G(T(I)))\leq (1-\eps) e(G)$.
\end{enumerate}

\end{proposition}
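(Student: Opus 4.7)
The plan is to encode $H$-free subgraphs of $G$ as independent sets of an $\ell$-uniform hypergraph and invoke the container theorem. Let $\F$ be the balanced family of copies of $H$ produced by Lemma~\ref{key-lemma}, and let $\F^*$ be the $\ell$-uniform hypergraph on vertex set $E(G)$ whose hyperedges are $\{E(H') : H' \in \F\}$. Every $H$-free subgraph $I \subseteq G$ is independent in $\F^*$, since $I$ contains no copy of $H$ and hence no member of $\F$. The function $f_G$ will be the container function supplied by Theorem~\ref{container-lemma}.

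To apply Theorem~\ref{container-lemma}, I would verify $\delta(\F^*, \tau) \leq \delta$ for a suitable $\tau$ and a fixed small $\delta < \delta_0(\ell)$. For $2 \leq j \leq \ell - 1$, Lemma~\ref{key-lemma} gives $d^{(j)}(v) \leq C\beta^{j-1}|\F|/e(G)$, so $\sum_v d^{(j)}(v) \leq C\beta^{j-1}|\F|$, contributing $C(\beta/\tau)^{j-1}$ to $\delta(\F^*, \tau)$. For $j = \ell$, the trivial bound $d^{(\ell)}(v) \leq 1$ together with $\sum_v d^{(\ell)}(v) \leq \ell|\F|$ contributes $\ell/\tau^{\ell-1}$. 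Taking $\tau = \beta\delta/\varepsilon$ with $\varepsilon$ sufficiently small in terms of $\delta$ makes the $j \leq \ell - 1$ contributions a small geometric sum in $\varepsilon/\delta$, and choosing $k \geq k_0^*$ large enough in terms of $H, \delta, \varepsilon$ absorbs the $j = \ell$ term; this yields $\delta(\F^*, \tau) \leq \delta$. Theorem~\ref{container-lemma} then produces a collection $\C$ and function $f_G$ with fingerprint bound $|T| \leq \tau N/\delta = \beta \cdot e(G)/\varepsilon = \mu n^\alpha$. Since every $H$-free $I$ is independent in $\F^*$, there is $T \subseteq I$ with $e(T) \leq \mu n^\alpha$ and $I \subseteq f_G(T)$, establishing (a).

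For (b), I argue by contradiction: suppose some container $C \in \C$ has $|C| > (1-\varepsilon)e(G)$. The $|S| = 1$ case of Lemma~\ref{key-lemma} gives $d_\F(\{e\}) \leq C|\F|/e(G)$ for every $e \in E(G)$, so the number of hyperedges of $\F^*$ meeting $E(G) \setminus C$ is at most $|E(G) \setminus C| \cdot C|\F|/e(G) < C\varepsilon|\F|$. Hence $e(\F^*[C]) > (1 - C\varepsilon)|\F|$, and choosing $\varepsilon < \delta/C$ contradicts Theorem~\ref{container-lemma}(b). Thus $|C| \leq (1-\varepsilon)e(G)$, which is (b). The main obstacle is the simultaneous control of the $j = \ell$ term of $\delta(\F^*, \tau)$, the fingerprint bound $\tau N/\delta \leq \mu n^\alpha$, and the contradiction bound $\varepsilon < \delta/C$, which forces a delicate ordering of the constants and a suitable choice of $k_0^*$; everything else follows routinely from the balanced supersaturation of $\F$ and the container framework.
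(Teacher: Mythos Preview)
Your overall strategy is exactly the paper's: view $\F$ as an $\ell$-uniform hypergraph on $E(G)$, verify the co-degree condition $\delta(\F,\tau)\le\delta$, feed this into Theorem~\ref{container-lemma}, and then use the $|S|=1$ bound of Lemma~\ref{key-lemma} to force every container to miss at least an $\eps$-fraction of $E(G)$. Part~(b) and the fingerprint bound in~(a) go through just as you describe.

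There is, however, a genuine error in your treatment of the $j=\ell$ summand of $\delta(\F,\tau)$. With your choice $\tau=\beta\delta/\eps$ and $\beta=k^{-\lam^*(\al,H)}$, the quantity you write down is
\[
\frac{\ell}{\tau^{\ell-1}}=\ell\Bigl(\frac{\eps}{\delta}\Bigr)^{\ell-1}\beta^{-(\ell-1)}
=\ell\Bigl(\frac{\eps}{\delta}\Bigr)^{\ell-1}k^{\lam^*(\al,H)(\ell-1)},
\]
which \emph{increases} with $k$; enlarging $k_0^*$ does the opposite of what you claim. The culprit is the estimate $\sum_v d^{(\ell)}(v)\le \ell|\F|$: since $|\F|\gg e(G)$, this discards precisely the factor $e(G)/|\F|$ needed to cancel $\beta^{-(\ell-1)}$.

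The paper fixes this by using the sharper trivial bound $\sum_v d^{(\ell)}(v)\le e(G)$ (each $d^{(\ell)}(v)\le 1$, and $v(\F)=e(G)$) together with $C\beta^{\ell-1}|\F|/e(G)\ge 1$, which is a consequence of the lower bound on $|\F|$ coming from condition~1 of Lemma~\ref{key-lemma}. This yields a $j=\ell$ contribution of at most $C(\beta/\tau)^{\ell-1}$, a constant independent of $k$, after which the rest of your argument (with $\eps=\delta^3$ and $\tau=\beta/\delta^2$, say) works verbatim.
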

\begin{proof}
Note that condition 1 in the proposition still holds if we replace $\delta$ with an even smaller positive real.
Hence in the our proof, we may assume $\delta$ to be sufficiently small.
 Let $\F$ be the family guaranteed by  Lemma~\ref{key-lemma} with codegree ratio
$\beta$. Let $N=v(\F)=e(G)=kn^\al$. Since we will view $\F$ as a hypergraph, we will write $e(\F)$ for $|\F|$.
Set 
\[\frac{1}{\tau}=\frac{\delta^2} {\beta} \quad \mbox{ and } \quad \eps=\delta^3. \]

Since $\forall S\subseteq E(G), 1\leq |S|\leq \ell-1, d_\F(S)\leq C\beta^{|S|-1}e(\F)/N$ holds, we have
\begin{equation} \label{tau-bound1}
\frac{1}{e(\F)} \sum_{j=2}^{\ell-1} \frac{1}{\tau^{j-1}} \sum_{v\in V(\F)} d^{(j)} (v)
\leq \frac{1}{e(\F)} \sum_{j=2}^{\ell-1} (\frac{\delta^2}{\beta})^{j-1} \cdot  N\left(C\beta^{j-1} \frac{e(\F)}{N}\right)
<C\sum_{j=2}^{\ell-1}\delta^{2j-2}\leq 2C\delta^2.
\end{equation}

Also, since $\forall v\in V(\F)$, $d^{(\ell)}(v)\leq 1$ and $C\beta^{\ell-1}e(\F)/N\geq 1$, we have
\[\frac{1}{e(\F)}\cdot \frac{1}{\tau^{\ell-1}}\sum_{v\in V(F)} d^{(\ell)} (v)\leq \frac{N}{e(\F)} \cdot (\frac{\delta^2}{\beta})^{\ell-1} 
\leq C\delta^{2\ell-2}\leq C\delta^2,\]
By our discussion above, we get
\begin{equation} \label{delta-bound}
\delta(\F,\tau)=\frac{1}{e(\F)} \sum_{j=2}^{\ell} \frac{1}{\tau^{j-1}} \sum_{v\in V(\F)} d^{(j))} (v)
\leq 2C\delta^2+C\delta^2 \leq \delta.
\end{equation}
By Theorem~\ref{container-lemma}, there exist
a collection $\cC$ of subsets of $V(\F)$ and  a function $f_G: V(\F)^{(\leq \tau N/\delta)}\to \cC$
such that for every $H$-free subgraph $I\subset G$,

\begin{enumerate} 
\item [{\rm (a')}] there exists $T=T(I)\subset I$ with $e(T)\leq \tau N/\delta$
and $I\subset f_G(T)$, and
\item [{\rm (b')}] $e(\F[T(I)))\leq (1-\delta) e(\F)$.
\end{enumerate}

In condition (a') we have 
\[e(T)\leq \tau N/\delta= (\beta/\delta^5) e(G) = (\beta/\eps) kn^\al=\mu n^\al,\]
condition (a') is equivalent to condition (a).
To complete the proof, it suffices to show that if $I$ is an independent set in $\F$
(i.e. if $I$ is an $H$-free subgraph of $G$) we have
$e(f_G(T(I)))\leq (1-\eps) e(G)$. 
Let $D=f_G(T(I))$.
By condition $(b')$, $e(\F[D])\leq (1-\delta) e(\F)$.
Hence, if we delete $v(\F)\setminus D$ from $\F$ we lose at least $\delta e(\F)$ edges of $\F$. On the other hand,
by our assumption about $\F$, each vertex of $\F$ lies in at most $Ce(\F)/e(G)$ edges of $\F$. Hence if we delete
$V(\F)\setminus C$ from $\F$, we lose at most $(v(\F)-|D|) Ce(\F)/e(G)$ edges of $\F$. Hence, we have
\[\delta e(\F)\leq (v(\F)-|D|) C e(\F)/e(G).\]
Solving for $|D|$ and using $v(\F)=e(G)$, we have 
\[|D|\geq v(\F)-(\delta/C) e(G) =[1-(\delta/C)] e(G) \geq (1-\eps) e(G).\]
In other words, we have $e(f_G(T(I))\geq (1-\eps)e(G)$, as desired.
\end{proof}

\begin{remark} \label{container-remark}
When we apply Proposition~\ref{container-prop}, we can take $\beta=k^{-\lam(\al,H)}$ as in Theorem~\ref{balanced-supersat1}
and for Erd\H{o}s-Simonovits good $H$, we can take $\beta=\max\{k^{-1}n^{-\phi(\al,H)}, k^{-\lam^*(\al,H)}\}$ as
in Theorem~\ref{balanced-supersat2}.
\end{remark}

We need an estimation lemma from \cite{MS}.
\begin{lemma} [\cite{MS}] \label{sum-estimate}
Let $M>0, s>0$ and $0<\delta<1$. If $a_1,\dots, a_m$ are reals that satisfy $s=\sum_j a_j$ and $1\leq a_j\leq (1-\delta)^j M$
for each $j\in [m]$, then 
\[s\log s \leq \sum_{j=1}^m a_j\log a_j + O(M).\]
\end{lemma}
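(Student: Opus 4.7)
The plan is to use an entropy-style rewriting. Since $s = \sum_j a_j$, elementary algebra gives the identity
\[
s\log s - \sum_{j=1}^m a_j \log a_j \;=\; \sum_{j=1}^m a_j \log\!\left(\frac{s}{a_j}\right),
\]
so it suffices to show the right-hand side is $O(M)$. The two ingredients will be (i) a global upper bound on $s$ from the geometric-decay hypothesis, and (ii) pointwise control of each summand via the concavity of $x\log(1/x)$.

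First I would observe that $s = \sum_{j} a_j \le M\sum_{j\ge 1}(1-\delta)^j = M(1-\delta)/\delta$, so in particular $s = O(M)$. Then I would split $\log(s/a_j) = \log(s/M) + \log(M/a_j)$ and handle the two pieces separately. Summing the first piece contributes $s\log(s/M) \le s\log(1/\delta) \le (M/\delta)\log(1/\delta) = O(M)$, which is acceptable. Note also that every summand in $\sum_j a_j \log(M/a_j)$ is nonnegative, since for $j \ge 1$ we have $a_j \le (1-\delta)^j M < M$.

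For the remaining sum $\sum_j a_j \log(M/a_j)$ I would exploit that $g(x) = x\log(1/x)$ is increasing on $(0, 1/e]$ and bounded by $1/e$ on all of $(0,1)$. Writing $x_j = a_j/M$, we have $x_j \le (1-\delta)^j$. For those indices $j$ large enough that $(1-\delta)^j \le 1/e$, monotonicity yields
\[
a_j \log(M/a_j) = M\,g(x_j) \le M\,g\bigl((1-\delta)^j\bigr) = j\,|\log(1-\delta)|\,(1-\delta)^j M,
\]
and summing the tail $\sum_j j(1-\delta)^j = (1-\delta)/\delta^2$ gives $O(M)$. For the at most $O(1/\delta)$ small-index terms where $(1-\delta)^j > 1/e$, each summand is trivially at most $M/e$, contributing another $O(M)$. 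Combining everything yields the claim.

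The main (mild) obstacle is conceptual rather than technical: one must first notice the entropy rewriting, since a direct lower bound on $\sum a_j \log a_j$ using only $a_j \le (1-\delta)^j M$ and $a_j \ge 1$ is not strong enough. Once the rewriting is in place the argument reduces to elementary geometric-series estimates whose constants depend only on the fixed parameter $\delta$, and so are absorbed into the $O$-notation.
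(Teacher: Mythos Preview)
Your argument is correct. The entropy rewriting $s\log s - \sum_j a_j\log a_j = \sum_j a_j\log(s/a_j)$ is the natural starting point, and the split into $s\log(s/M)$ plus $\sum_j a_j\log(M/a_j)$, followed by the monotonicity/boundedness of $x\mapsto x\log(1/x)$ together with the geometric tail $\sum_j j(1-\delta)^j < \infty$, handles both pieces cleanly. The dependence of all constants on $\delta$ alone is exactly what the applications in the paper require, since there $\delta$ is determined by $\eps$ and $H$.

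One remark on the comparison you asked for: the paper does \emph{not} supply its own proof of this lemma. It is quoted verbatim from Morris--Saxton \cite{MS} (Lemma~6.3 there) and used as a black box in the proofs of Theorems~\ref{container-main} and~\ref{container-ES}. So there is no in-paper argument to compare against; your write-up simply fills in a citation. For what it is worth, the original Morris--Saxton proof proceeds along the same lines as yours: one observes $\sum_j a_j\log(s/a_j)\ge 0$ trivially (so only the upper bound is at issue), bounds $s=O_\delta(M)$ by the geometric series, and then controls the entropy sum termwise using $a_j\le (1-\delta)^jM$. Your treatment of the threshold $j$ where $(1-\delta)^j$ crosses $1/e$ is a perfectly standard way to make the monotonicity step rigorous.
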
 

In what follows, by a {\it colored graph}, we mean a graph together with a labelled partition of its edge set.
Next, we prove two similar theorems, by adapting the arguments given in Section 6 of Morris-Saxton \cite{MS}
to fit our general balanced supersaturation results.
The former applies to all bipartite graphs that contain a cycle and the latter applies to Erd\H{o}s-Simonovits
good bipartite graphs that contain a cycle.

\begin{theorem} \label{container-main}
Let $H$ be a bipartite graph with $h$ vertices and $\ell$ edges that contains a cycle. Let $A,\alpha$ be positive reals such that
$\ex(n,H)\leq An^\al$ holds for every $n\in \mathbb{N}$.  
There exists a constant $C$ such that the following holds for all sufficiently large $n\in\mathbb{N}$
 and $k\in \mathbb{R}^+$. Let $\I(n)$ denote all the $H$-free graphs on $[n]$
and $\G(n,k)$ the collection of all graphs on $[n]$ with at most $kn^\al$ edges.
There exists a collection $\cS$ of colored graphs
with $n$ vertices and at most $Ck^{1-\lam(\al,H)} n^\al$ edges and functions
\[g: \I\to \cS\quad \mbox { and } \quad h:\cS\to \G(n,k)\]
with the following properties
\begin{enumerate}
\item[{\rm (a)}] $\forall s\geq 1$ the number of colored graphs in $\cS$ with $s$ edges is at most
\[ \left(\frac{Cn^\al}{s}\right)^{\frac{\lam(\al, H)}{\lam(\al, H)-1}\cdot s}\cdot \expo\left( Ck^{1-\lam(\al, H)} n^\al\right) .\]
\item[{\rm (b) }] $\forall I\in \I(n)$ $g(I)\subset I \subset h(g(I))\cup g(I)$. 
\end{enumerate}
\end{theorem}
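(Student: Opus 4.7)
My plan is to iterate the single-step container Proposition~\ref{container-prop}, instantiated via Theorem~\ref{balanced-supersat1} so that $\beta = k^{-\lam(\al,H)}$, in the manner of Section~6 of \cite{MS}. Starting from $G_0 = K_n$, at step $j\ge 1$ I apply Proposition~\ref{container-prop} to the current container $G_{j-1}\supset I$ with $e(G_{j-1}) = k_{j-1} n^{\al}$, obtaining a fingerprint $T_j\subset I\cap G_{j-1}$ with
\[e(T_j)\le \mu_j n^{\al}, \qquad \mu_j := k_{j-1}\beta_{j-1}/\eps = k_{j-1}^{1-\lam(\al,H)}/\eps,\]
and a new container $G_j := f_{G_{j-1}}(T_j)\supset I$ with $e(G_j)\le (1-\eps) e(G_{j-1})$. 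I iterate until the first step $J$ at which $e(G_J)\le kn^{\al}$ (which happens after $J = O(\log n)$ steps since each iteration shrinks the container by a factor $1-\eps$), and set $h(g(I)):= G_J \in \G(n,k)$ and $g(I)$ to be the colored graph on $[n]$ whose color-$j$ edge set is $T_j$. Conclusion~(b) is immediate from $T_j\subset I\subset G_j$. Since $1-\lam(\al,H) < 0$, the geometric series $\sum_{j=1}^{J}\mu_j n^{\al}$ is dominated by its final term, giving the edge bound $e(g(I)) = O(k^{1-\lam(\al,H)} n^{\al})$ asserted immediately before~(a).

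For~(a) I count colored graphs in $\cS$ with a prescribed total edge count $s$. Such a colored graph is determined by the sequence $(T_1,\dots,T_J)$; once sizes $(s_1,\dots,s_J)$ with $\sum s_j = s$ are fixed, the number of choices is at most
\[\prod_{j=1}^{J}\binom{k_{j-1}n^{\al}}{s_j}\le \exp\!\Bigl(\sum_{j}s_j \log(ek_{j-1}n^{\al}/s_j)\Bigr),\]
because $G_{j-1}$ is itself determined by $T_1,\dots,T_{j-1}$. The constraint $s_j\le k_{j-1}^{1-\lam(\al,H)}n^{\al}/\eps$ rearranges to $k_{j-1}\le (n^{\al}/(\eps s_j))^{1/(\lam(\al,H)-1)}$, yielding $\log(ek_{j-1}n^{\al}/s_j)\le \frac{\lam(\al,H)}{\lam(\al,H)-1}\log(n^{\al}/s_j)+O(1)$. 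To pass from $\sum_j s_j\log(n^{\al}/s_j)$ to $s\log(n^{\al}/s)$, I invoke Lemma~\ref{sum-estimate} after reindexing $a_i := s_{J-i+1}$: since $k_j\le(1-\eps)k_{j-1}$, the upper bounds $\mu_j n^{\al}$ form a geometric sequence in $J-j$ with ratio $(1-\eps)^{\lam(\al,H)-1}$, so the $a_i$ satisfy $a_i\le (1-\delta)^i M$ with $M = \Theta(k^{1-\lam(\al,H)} n^{\al})$ and $\delta = 1-(1-\eps)^{\lam(\al,H)-1}\in(0,1)$. The lemma converts the sum to $s\log(n^{\al}/s)+O(M)$, and multiplying by the number of compositions of $s$ into at most $J$ parts (bounded by $2^{O(J)}$, absorbed into $\exp(Ck^{1-\lam(\al,H)}n^{\al})$) gives the bound in~(a).

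The most delicate step is the application of Lemma~\ref{sum-estimate}: I have to verify $a_i\ge 1$ for all retained terms (forcing me to discard steps with $s_j = 0$ and shift the index) and that $M$ really is of order $k^{1-\lam(\al,H)}n^{\al}$, so that the error term is absorbed into the $\exp(Ck^{1-\lam(\al,H)}n^{\al})$ factor rather than corrupting the leading $(Cn^{\al}/s)^{\lam(\al,H)/(\lam(\al,H)-1)\cdot s}$ factor. This mirrors the Morris--Saxton execution for $C_{2\ell}$, whose exponent $2\ell/(\ell-1)$ equals $\lam(\al,C_{2\ell})/(\lam(\al,C_{2\ell})-1)$; the only substantive change is the use of Theorem~\ref{balanced-supersat1} in place of Theorem~\ref{MS-cycle-supersat} to supply a balanced supersaturation input in our general setting.
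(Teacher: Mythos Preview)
Your approach is essentially the paper's: iterate Proposition~\ref{container-prop} with $\beta=k^{-\lam(\al,H)}$ starting from $K_n$, record the fingerprints as color classes, and for~(a) bound $\prod_j\binom{k_{j-1}n^\al}{s_j}$ by converting the constraint $s_j\le k_{j-1}^{1-\lam}n^\al/\eps$ into $k_{j-1}\le (n^\al/(\eps s_j))^{1/(\lam-1)}$ and then invoking Lemma~\ref{sum-estimate}. The paper organizes the count slightly differently---summing over sequences $\bk\in\K(m)$ and $\ba\in\A(\bk)$ rather than exploiting that $G_{j-1}$ is determined by $(T_1,\dots,T_{j-1})$---but the substance is identical.

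There is one slip. The number of compositions of $s$ into at most $J$ nonnegative parts is $\binom{s+J}{J}$, which for $J=O(\log n)$ and $s$ as large as $k^{1-\lam}n^\al$ is $n^{O(\log n)}=\exp\bigl(O(\log^2 n)\bigr)$, not $2^{O(J)}=n^{O(1)}$. This is exactly the factor the paper incurs from $\sum_m\sum_{\bk\in\K(m)}|\A(\bk)|$, and it is still absorbed into $\exp\bigl(Ck^{1-\lam}n^\al\bigr)$ because $k^{1-\lam}n^\al\ge n\gg\log^2 n$ (using $m_2(H)\ge 1$ to get $\lam\le 1/(2-\al)$ and hence $k^{1-\lam}n^\al\ge n^{(2-\al)(1-\lam)+\al}\ge n$ for $k\le n^{2-\al}$). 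So the proof goes through once you replace ``$2^{O(J)}$'' by ``$n^{O(\log n)}$'' and note this absorption explicitly. A second cosmetic point: without setting $G_j:=f_{G_{j-1}}(T_j)\setminus T_j$ (as the paper does) the fingerprints $T_j$ need not be edge-disjoint, so you should either make that deletion or clarify that edges of the colored graph are counted with multiplicity.
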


\begin{proof}
Note that since $H$ contains a cycle, $m_2(H)\geq 1$.
Let $I\in \I(n)$. We will apply Proposition~\ref{container-prop} repeatedly (with $\beta=k^{-\lam(\al,H)}, \mu=k\beta/\eps
=(1/\eps)k^{1-\lam(\al,H)}$).
 Let $G_0=K_n$. For sufficiently large $n$, $G_0$ clearly satisfies the condition on $G$ in Proposition~\ref{container-prop}. Apply Proposition~\ref{container-prop}, with $G_0$ playing the role of $G$
to obtain the function $f_{G_0}$ and a subset $T_1$ of $I$ with $T_1\subset I \subset f_{G_0}(T_1)$ where
$T_1$ and $f_{G_0}(T_1)$ satisfy the additional properties described in Proposition~\ref{container-prop}.
Now, let $G_1=f_{G_0}(T_1)\setminus T_1$ and $I_1=I\cap G_1=I\setminus T_1$.
Apply Proposition~\ref{container-prop} again, with $G_1$ playing the role of $G$ and $I_1$ playing the role of $I$
to obtain the function $f_{G_1}$ and a subset $T_2$ of $I_1$ with $T_2\subset I_1\subset f_{G_1}(T_2)$.
We continue like this until we arrive at a graph $G_{m(I)}$ with at most $kn^\al$ edges. 

Let $g(I) =  T_1 \cup T_2 \cup \cdots \cup T_{m(I)}$, where elements of $T_i$ are colored with color $i$.   Let $\cS(s) = \{g(I) :|g(I)| = s\}$.  Let $h(g(I)) = G_{m(I)}$. Note that $h$ is well-defined (see \cite{BMS, MS, ST} for detailed discussion).
 Furthermore,  as $g(I) \subset I \subset h(g(I)) \cup g(I)$,  conditions (b) is fulfilled.   
It remains to show (a).  We begin by partitioning $\cS(s)$ into sets $\cS_m(s)$ where $\cS_m(s) = \{ S \in \cS(s) : \text{ the edges of } S  \text{ are colored with } m \text{ colors}\}$.

For each $m\in \mathbb{N}$,  let

\[\K(m) = \{\bk = (k_1,  \cdots k_m): k_j \in \mathbb{R} ,  (1-\eps)^{j - m}k \leq k_j \leq (1 - \eps)^jn^{2 - \alpha} \text{ and } k_jn^{\alpha} \in \mathbb{N} \}\]

And for each $\bk \in \K(m)$,  let 

\[\A(\bk) = \{\ba= (a_1,  \dots a_m) : a_j \in \mathbb{N},  a_j \leq \frac{1}{\eps}k_j^{1 - \lambda(\al,H)} \text{ and } \sum_j a_j = s \}\]

By definition each sequence in $\bk \in \K(m)$ corresponds to a potential sequence $(G_1, \cdots G_m)$ where $e(G_j) = k_jn^{\alpha}$,  as the edges of $(1-\eps)^{j - m}kn^{\alpha} \leq e(G_j) \leq (1 - \eps)^j n^2$ and by Proposition~\ref{container-prop}, we have $ e(T_{j})\leq \frac{1}{\eps}k_j^{1 - \lam(\al, H)}n^{\alpha}$.  Note further that our algorithm returns pairs $(G_i, T_i)$,  such that each sequence of $(T_1,  \cdots T_m)$ is uniquely identified with a sequence $(G_1,  \cdots G_m)$.  Thus it suffices to only count the choices of $T_i$.   The sequence $\ba \in \A(\bk)$  corresponds to a sequence of sizes for $T_i$.  Thus,  we have that for a fixed $m$

\[|\cS_m(s)| \leq \sum_{\bk\in \K(m)} \sum_{\ba\in \A(\bk)} \prod_{j=1}^m \binom{k_j n^\al}{a_j}.\]

Since for each $j\in [m]$ 
\[a_j\leq (1/\eps) k_j^{1-\lam(\al,H)}  \leq  (1/\eps)[(1-\eps)^{-j+1} k]^{1-\lam(\al,H)} 
=(1/\eps) (1-\eps)^{(\lam(\al,H)-1)(j-1)} k^{1-\lam(\al,H)}.\]
So
\[k_j\leq (1/\eps)^{\frac{1}{\lam(\al, H)-1} } n/ a_j^{\frac{1}{\lam(\al, H)-1}}.\]
Hence

\[\binom{k_jn^\al}{a_j}\leq \left(\frac{ek_jn^\al}{a_j}\right)^{a_j}  \leq \left( \frac{n^\al}{\eps a_j } \right )^{\frac{\lam(\al,H)}{\lam(\al,H)-1}\cdot a_j}\]
Applying Lemma~\ref{sum-estimate} with $M=(1/\eps) k^{1-\lam(\al,H)} n^\al,1-\delta=(1-\eps)^{\lam(\al,H)-1}$, the product over $j=1,\dots, m$ is at most
\[\left ( \frac{C'n^\al}{s}\right) ^{\frac{\lam(\al, H)}{\lam(\al, H)-1}\cdot s}\cdot \expo\left ( C'k^{1-\lam(\al, H)} n^\al\right),\]
for some $C'=C'(H)$. 

Thus,   \[|\cS(s)| \leq \sum_{m = 1}^{\infty}\sum_{\bk\in \K(m)} \sum_{\ba\in \A(\bk)}\left( \frac{C'n^\al}{s}\right) ^{\frac{\lam(\al, H)}{\lam(\al, H)-1}\cdot s}\cdot \expo\left( C'k^{1-\lam(\al, H)} n^\al\right)\]

Note that $|\K(m)| = 0$ for values of $m \gg \log n $ as $(1- \eps)^m n^{2} < k$. 
For any fixed $m  = O(\log n)$,   $|\K(m)|  = n^{O(\log n)}$,  as when picking any $\bk$, we have $O(n^2)$ choices for each coordinate and  $O(\log n) $ coordinates to choose for.  For any fixed $\bk$,   similarly, $|\A(\bk)| = n^{O(\log n) }$.  
Hence, 
\[\sum_{m=1}^\infty \sum_{\bk\in \K(m)} |\A(\bk)|=n^{O(\log n)}=\expo(O(\log^2 n)).\] 
Since $m_2(H)\geq 1$, $\lam(\al,H)\leq 1/(2-\al)$. This imples $(2-\al)(1-\lam)\geq 1-\al$.
Since $k\leq n^{2-\al}$, we have $k^{1-\lam(\al,H)} n^\al\geq n\gg \log^2 n$.
Hence, the theorem holds by choosing an appropriate $C>C'$.
\end{proof}

\begin{remark}
The proof of Theorem~\ref{container-main} can be adapted for $r$-partite $r$-graphs as well. However, as the random Tur\'an problem
in hypergraphs is not a focus of this paper, we opt to state Theorem~\ref{container-main} just for bipartite graphs.
\end{remark}

\begin{theorem} \label{container-ES}
Let $H$ be a bipartite graph with $h$ vertices and $\ell$ edges that contain a cycle. Let $A,\alpha$ be positive reals such that
$\ex(n,H)\leq An^\al$ holds for every $n\in \mathbb{N}$. Suppose that $H$ is Erd\H{o}s-Simonovits good.
There exists a constant $C$ such that the following holds
for all sufficiently large $n\in\mathbb{N}$ and $k\in \mathbb{R}^+$ with 
\[k\leq n^{\frac{\phi(\al,H)}{\lam^*(\al, H)-1}} / (\log n)^{\frac{2}{\lam^*(\al, H)-1}}.\] Let $\I(n)$ denote all the $H$-free graphs on $[n]$
and $\G(n,k)$ the collection of all graphs on $[n]$ with at most $kn^\al$ edges.
There exists a collection $\cS$ of colored graphs
with $n$ vertices and at most $Ck^{1-\lam^*(H)} n^\al$ edges and functions
\[g: \I\to \cS\quad \mbox { and } \quad h:\cS\to \G(n,k)\]
with the following properties
\begin{enumerate}
\item[{\rm (a)}] $\forall s\geq 1$ the number of colored graphs in $\cS$ with $s$ edges is at most
\[ \left(\frac{Cn^\al}{s}\right)^{{\frac{\lam^*(\al, H)}{\lam^*(\al, H)-1}\cdot s}}\cdot \expo\left( Ck^{1-\lam^*(\al, H)} n^\al\right) .\]
\item[{\rm (b) }] $\forall I\in \I(n)$ $g(I)\subset I \subset h(g(I))\cup g(I)$. 

\end{enumerate}
\end{theorem}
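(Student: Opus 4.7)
The plan is to mirror the proof of Theorem~\ref{container-main}, but using the improved balanced supersaturation for Erd\H{o}s-Simonovits good graphs (Theorem~\ref{balanced-supersat2}) in place of Theorem~\ref{balanced-supersat1}. As in the proof of Theorem~\ref{container-main}, I will iteratively apply Proposition~\ref{container-prop} to a descending sequence of graphs $K_n=G_0\supseteq G_1\supseteq\cdots\supseteq G_{m(I)}$, stopping once $e(G_{m(I)})\leq kn^\al$, and set $g(I)=T_1\cup\cdots\cup T_{m(I)}$ (with edges colored by the step in which they were extracted) and $h(g(I))=G_{m(I)}$. Condition~(b) is then automatic, and (a) is a counting problem. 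The crucial difference is that now at step $j$, writing $k_j=e(G_j)/n^\al$, Remark~\ref{container-remark} lets me take
\[\beta_j=\max\{k_j^{-1}n^{-\phi(\al,H)},\;k_j^{-\lam^*(\al,H)}\},\qquad \mu_j=k_j\beta_j/\eps,\]
so the bound on $e(T_j)$ is $\mu_jn^\al\leq (1/\eps)\max\{n^{\al-\phi(\al,H)},\;k_j^{1-\lam^*(\al,H)}n^\al\}$.

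The plan is to split the iteration into two phases. \emph{Phase I} consists of steps where $k_j>n^{\phi(\al,H)/(\lam^*(\al,H)-1)}$; here the first term dominates $\beta_j$ and $e(T_j)\leq (1/\eps)n^{\al-\phi(\al,H)}$. \emph{Phase II} consists of the remaining steps, where $\beta_j=k_j^{-\lam^*(\al,H)}$ and $e(T_j)\leq (1/\eps)k_j^{1-\lam^*(\al,H)}$. Phase~II is handled exactly as in Theorem~\ref{container-main} (with $\lam^*(\al,H)$ replacing $\lam(\al,H)$): defining the analogous index sets $\K(m),\A(\bk)$ for Phase~II alone, applying Lemma~\ref{sum-estimate} to $\sum_j a_j$ with $M=(1/\eps)k^{1-\lam^*(\al,H)}n^\al$ and $1-\delta=(1-\eps)^{\lam^*(\al,H)-1}$, yields for the product over the Phase~II steps the bound
\[\left(\frac{C'n^\al}{s}\right)^{\frac{\lam^*(\al,H)}{\lam^*(\al,H)-1}s}\cdot\expo\!\left(C'k^{1-\lam^*(\al,H)}n^\al\right).\]

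For Phase I, since $k_j$ decreases geometrically (by Proposition~\ref{container-prop}(b), we have $k_{j+1}\leq(1-\eps)k_j$), there are only $O(\log n)$ such steps, and at each step $\binom{k_jn^\al}{e(T_j)}\leq n^{O(n^{\al-\phi(\al,H)})}$, so the total contribution of Phase~I to $|\cS(s)|$ is at most $\expo\!\big(O(n^{\al-\phi(\al,H)}\log n)\big)$, and the choice of the sequence of sizes and of $(k_j)$ in Phase~I contributes a factor $\expo(O(\log^2 n))$. The main point, which is where the hypothesis on $k$ is used, is the identity
\[k\leq n^{\phi(\al,H)/(\lam^*(\al,H)-1)}/(\log n)^{2/(\lam^*(\al,H)-1)}\;\Longleftrightarrow\;k^{1-\lam^*(\al,H)}n^\al\geq n^{\al-\phi(\al,H)}(\log n)^2,\]
so both the Phase~I edge count and the $\expo(O(\log^2 n))$ choice factor are absorbed into $\expo(Ck^{1-\lam^*(\al,H)}n^\al)$ for a sufficiently large $C>C'$, yielding (a).

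The step I expect to be the main technical nuisance is verifying that Phase~I really does only contribute lower-order terms; in particular, confirming that the contribution of the Phase~I edge-size vector $\ba$ and the fact that Phase~I edges are still colored and counted in $s$ can be safely absorbed without disturbing the $\big(Cn^\al/s\big)^{\frac{\lam^*(\al,H)}{\lam^*(\al,H)-1}s}$ factor — this is where the hypothesis on $k$ is genuinely needed, and one must be careful that $s$ in the final bound may include edges from both phases. The remaining ingredients (well-definedness of $h$, the containment $g(I)\subset I\subset h(g(I))\cup g(I)$, and the fact that each $G_j$ meets the hypotheses of Proposition~\ref{container-prop} provided $k_0^*$ is large enough) are all the same as in the proof of Theorem~\ref{container-main}.
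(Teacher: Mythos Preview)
Your proposal is correct and follows essentially the same approach as the paper: iterate Proposition~\ref{container-prop} with the Erd\H{o}s--Simonovits good $\beta$ from Remark~\ref{container-remark}, split the steps according to which term of $\beta_j$ dominates (your Phase~I/Phase~II is exactly the paper's type~1/type~2 split), bound the Phase~I contribution crudely by $(n^2)^{\sum a_j}$ and absorb it via the hypothesis on $k$, and handle Phase~II via Lemma~\ref{sum-estimate} exactly as in Theorem~\ref{container-main}. One minor slip: your Phase~I product should be $\expo\!\big(O(n^{\al-\phi(\al,H)}(\log n)^2)\big)$, not $\expo\!\big(O(n^{\al-\phi(\al,H)}\log n)\big)$, since there are $O(\log n)$ steps each contributing $\expo(O(n^{\al-\phi}\log n))$ --- but you already record the correct $(\log n)^2$ in the equivalence with the hypothesis on $k$, so the absorption goes through as you say.
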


\begin{proof}
Note that since $H$ contains a cycle, $m^*_2(H)\geq 1$.
Let $I\in \I(n)$. As in the proof of Theorem~\ref{container-main} we will apply Proposition~\ref{container-main} repeatedly.
Since $H$ is Erd\H{o}s-Simonovits good, we can use 
\[\beta=\max\{k^{-1}n^{-\phi(\al,H)}, k^{-\lam^*(\al,H)}\}\mbox{ and }
\mu=k\beta/\eps=(1/\eps) \max\{n^{-\phi(\al,H)}, k^{1-\lam^*(\al,H)}\}.\]

let $G_0=K_n$. We apply Proposition~\ref{container-prop}, with $G_0$ playing the role of $G$
to obtain the function $f_{G_0}$ and a subset $T_1$ of $I$ with $T_1\subset I \subset f_{G_0}(T_1)$ where
$T_1$ and $f_{G_0}(T_1)$ satisfy the additional properties described in Proposition~\ref{container-prop}.
Now, let $G_1=f_{G_0}(T_1)\setminus T_1$ and $I_1=I\cap G_1=I\setminus T_1$.
Apply Proposition~\ref{container-prop} again, with $G_1$ playing the role of $G$ and $I_1$ playing the role of $I$
to obtain the function $f_{G_1}$ and a subset $T_2$ of $I_1$ with $T_2\subset I_1\subset f_{G_1}(T_2)$.
We continue like this until we arrive at a graph $G_{m(I)}$ with at most $kn^\al$ edges. 

Let $g(I) =  T_1 \cup T_2 \cup \cdots \cup T_{m(I)}$ , where elements of $T_i$ are colored with color $i$.   Let $\cS(s) = \{g(I) :|g(I)| =s \}$.  Let $h(g(I)) = G_{m(I)}$. As in the proof of Theorem~\ref{container-main}, $h$ is well defined.
Furthermore,  as $g(I) \subset I \subset h(g(I)) \cup g(I)$,  conditions (b) is fulfilled.   
It remains to show (a).  We begin by partitioning $\cS(s)$ into sets $\cS_m(s)$ where $\cS_m(s) = \{ S \in \cS(s) : \text{ the edges of } S  \text{ are colored with } m \text{ colors}\}$.

For each $m\in \mathbb{N}$, 

\[\K(m) = \{\bk = (k_1,  \cdots k_m): k_j \in \mathbb{R} ,  (1-\eps)^{j - m}k \leq k_j \leq (1 - \eps)^jn^{2 - \alpha} \text{ and } k_jn^{\alpha} \in \mathbb{N} \}\]

And for each $\bk \in \K(m)$, 

\[\A(\bk) = \{\ba= (a_1,  \dots a_m) : a_j \in \mathbb{N},  a_j \leq \frac{1}{\eps}\max\{k_j^{1 - \lam^*(\al,H)}, n^{-\phi(\al,H)}\}n^{\alpha} \text{ and } \sum_j a_j = s \}\]

So each sequence in $\bk \in \K(m)$ corresponds to a potential sequence $(G_1, \cdots G_m)$ where $e(G_i) = k_jn^{\alpha}$,  as the edges of $(1-\eps)^{j - m}kn^{\alpha} \leq e(G_j) \leq (1 - \eps)^j n^2$ and by Proposition~\ref{container-prop} and Remark~\ref{container-remark}, we have $ e(T_{j})\leq \frac{1}{\eps}\max\{k_j^{1 - \lam^*(\al,H)}, n^{-\phi(\al, H)}\}n^{\alpha}$. Note further that our algorithm returns pairs $(G_i, T_i)$,  such that each sequence of $(T_1,  \cdots T_m)$ is uniquely identified with a sequence $(G_1,  \cdots G_m)$.  Thus it suffices to only count the choices of $T_i$.   The sequence $\ba \in \A(\bk)$  corresponds to a sequence of sizes for $T_i$.  Thus,  we have that for a fixed $m$

\[|\cS_m(s))| \leq \sum_{\bk\in \K(m)} \sum_{\ba\in \A(\bk)} \prod_{j=1}^m \binom{k_j n^\al}{a_j}.\]

Now,  given a $\bk\in \K(m)$ and $\ba\in \A(\bk)$, let us partition the product over $j$ according
to whether $k_j^{1 - \lam^*(\al,H)} < n^{-\phi(\al, H)}$ (call this type 1) or not (call this type 2).  Because $|\K(m)| = 0$ for values of $m \gg \log(n)$ as $(1- \eps)^m n^{2} < k$, and some absolute constant $C_1$ the product over type 1 $j$'s is at most
\[(n^2)^{\sum_j a_j}\leq \expo\left( C_1\cdot n^{\al-\phi(\al, H)} (\log n)^2\right) \leq \expo\left(C_1\cdot k^{1-\lam^*(\al,H)}n^\al\right),\]
where in the last step, we used the fact that $k\leq n^\frac{\phi(H)}{\lam^*(\al,H)-1} (\log n)^{\frac{2}{1-\lam^*(\al,H)}}$.
For each type 2 $j$,  we have $k_j^{1 - \lam^*(\al,H)} \geq n^{-\phi(\al, H)}$ and thus $a_j\leq \frac{1}{\eps} k_j^{1-\lam^*(\al,H)}n^\al$.
From this we get
\[k_j\leq (1/\eps)^{\frac{1}{\lam^*(\al,H)-1} } n/ a_j^{\frac{1}{\lam^*(\al,H)-1}}.\]
Hence

\[\binom{k_jn^\al}{a_j}\leq \left(\frac{ek_jn^\al}{a_j}\right)^{a_j}  \leq \left( \frac{n^\al}{\eps a_j } \right )^{\frac{\lam^*(\al,H)}{\lam^*(\al,H)-1}\cdot a_j}\]

Applying Lemma~\ref{sum-estimate} with $M=k^{1-\lam^*(\al,H)} n^\al,1 - \delta=(1 - \eps)^{\lam^*(\al, H) - 1}$, the product over type 2 $j$'s is at most
\[\left ( \frac{C_2n^\al}{s}\right) ^{\frac{\lam^*(\al, H)}{\lam^*(\al, H)-1}\cdot s}\cdot \expo\left ( C_2k^{1-\lam^*(\al,H)} n^\al\right),\]
for some $C_2=C(H)$.  

Thus, letting $C' = 2\max\{C_1, C_2\}$,  \[|\cS_m(s)| \leq \sum_{\bk\in \K(m)} \sum_{\ba\in \A(\bk)}\left( \frac{C'n^\al}{s}\right) ^{\frac{\lam^*(\al,H)}{\lam^*(\al,H)-1}\cdot s}\cdot \expo\left( C'k^{1-\lam^*(\al,H)} n^\al\right)\]

And thus,  \[|\cS(s)| \leq \sum_{m = 1}^{\infty} \sum_{\bk\in \K(m)} \sum_{\ba\in \A(\bk)}\left( \frac{C'n^\al}{s}\right) ^{\frac{\lam^*(\al,H)}{\lam^*(\al,H)-1}\cdot s}\cdot \expo\left( C'k^{1-\lam^*(\al,H)} n^\al\right)\]

As in the proof of Theorem~\ref{container-main}, we have
$\sum_{m=1}^\infty \sum_{\bk\in \K(m)} |\A(\bk)|=n^{O(\log n)}$. The theorem thus follows.
\end{proof}

\begin{theorem}[Restatement of Theorem~\ref{random-Turan1}]
Let $H$ be a bipartite graph with $h$ vertices and $\ell$ edges that contain a cycle. Let $A,\al$ be positive reals such that $\ex(n,H)\leq An^\al$
for every $n\in \mathbb{N}$.  There exists a constant $C=C(\al, H)$ such that 
\[\ex(G(n,p), H)\leq \begin{cases} 
   Cn^{2-\frac{1}{m_2(H)}} & \text{if } p\leq n^{-\frac{1}{m_2(H)}},\\
Cp^{1-\frac{1}{\lam(\al,H)} }n^\al & \text{otherwise } 
\end{cases}\]
with high probability as $n\to \infty$.
\end{theorem}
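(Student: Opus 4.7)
The plan is to apply Theorem~\ref{container-main} together with a Chernoff-plus-union-bound estimate, treating the two ranges of $p$ separately. The case $p\leq n^{-1/m_2(H)}$ is essentially trivial: $e(G(n,p))$ is binomial with mean at most $n^2 p/2 \leq n^{2-1/m_2(H)}/2$, so by Chernoff, with high probability $e(G(n,p))\leq n^{2-1/m_2(H)}$, and this is already an upper bound on $\ex(G(n,p),H)$.

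For $p>n^{-1/m_2(H)}$, I would set $k=Bp^{-1/\lam(\al,H)}$ for a sufficiently large constant $B=B(H)$. The hypothesis on $p$ together with the identity $1/\lam(\al,H)=m_2(H)(2-\al)$ ensures $k\leq Bn^{2-\al}$, so $kn^\al$ does not exceed $\binom{n}{2}$; the condition $k\geq k_0$ can be arranged by enlarging $B$, or by observing that when $p$ (and hence $k$) is bounded, the conclusion follows deterministically from $\ex(G(n,p),H)\leq \ex(n,H)\leq An^\al$. Let $K:=kn^\al p = B p^{1-1/\lam(\al,H)} n^\al$. The target is to show $\ex(G(n,p),H)\leq 4K$ with high probability.

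Next, Theorem~\ref{container-main} provides a collection $\cS$ of colored graphs together with functions $g,h$ such that every $H$-free $I$ satisfies $g(I)\subseteq I\subseteq g(I)\cup h(g(I))$, with $h(g(I))\in \G(n,k)$. For $I\subseteq G(n,p)$ this gives $e(I)\leq e(g(I))+e(h(g(I))\cap G(n,p))$. Using the independence of the edges in $g(I)$ from those in $h(g(I))\setminus g(I)$, a union bound over $S\in \cS$ yields
\[\Pb\bigl[\ex(G(n,p),H)\geq 4K\bigr]\leq \sum_{s\geq 1}\sum_{S\in \cS(s)} p^{s}\,\Pb\bigl[\mathrm{Bin}(kn^\al,p)\geq 4K-s\bigr].\]

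The final step is to estimate this double sum, splitting at $s=2K$. For $s\leq 2K$, Chernoff gives $\Pb[\mathrm{Bin}(kn^\al,p)\geq 2K]\leq \expo(-K/4)$ since the mean equals $K$. The bound $|\cS(s)|\leq (Cn^\al/s)^{\frac{\lam(\al,H)}{\lam(\al,H)-1}s}\expo(Ck^{1-\lam(\al,H)}n^\al)$ from Theorem~\ref{container-main}, combined with the identity $k^{1-\lam(\al,H)}n^\al=B^{-\lam(\al,H)}K$ (which follows immediately from the choice of $k$), lets one absorb the $\expo(Ck^{1-\lam(\al,H)}n^\al)$ factor into the Chernoff tail once $B$ is taken large enough, since $B^{-\lam(\al,H)}\to 0$ as $B\to \infty$. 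For $s>2K$ one instead exploits the $p^s$ decay, bounding $(Cn^\al/s)^{\frac{\lam(\al,H)}{\lam(\al,H)-1}s} p^s\leq \expo\bigl(\tfrac{\lam(\al,H)}{\lam(\al,H)-1}\,s\log(Cn^\al/s)+s\log p\bigr)$ and verifying after some algebra that the resulting sum over $s$ is $o(1)$. The main obstacle is precisely this bookkeeping: tuning $B$ and partitioning the range of $s$ so that every contribution is negligible. The exponent $\lam(\al,H)$ of $k$ in the container theorem is tailored so that the critical identity $k^{1-\lam(\al,H)}n^\al\asymp K$ holds, which is the heart of why the two regimes balance.
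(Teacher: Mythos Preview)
Your approach is essentially the same as the paper's: apply Theorem~\ref{container-main} with $k$ proportional to $p^{-1/\lam(\al,H)}$, then union-bound over the fingerprints $S\in\cS$, using $p^{e(S)}$ for the event $S\subset G(n,p)$ and a tail bound for the number of edges of $h(S)$ falling in $G(n,p)$. The paper sets $k=p^{-1/\lam(\al,H)}$ without the auxiliary constant $B$ and bounds the tail directly via $\binom{kn^\al}{m-s}p^{m-s}\le (3pkn^\al/(m-s))^{m-s}$ rather than Chernoff, then invokes Lemma~\ref{sum-estimate} to absorb the sum over $s$, but the arithmetic is the same as yours (all three quantities $pkn^\al$, $k^{1-\lam(\al,H)}n^\al$, $p^{(\lam(\al,H)-1)/\lam(\al,H)}n^\al$ coincide). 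One small remark: with your choice of $B$ large enough that $CB^{-\lam(\al,H)}<2$, every $S\in\cS$ already satisfies $e(S)\le Ck^{1-\lam(\al,H)}n^\al=CB^{-\lam(\al,H)}K<2K$, so your ``$s>2K$'' case is in fact vacuous and no separate treatment is needed. For the regime $p\le n^{-1/m_2(H)}$ the paper instead appeals to monotonicity of $\ex(G(n,p),H)$ in $p$ to reduce to the threshold value, where the two bounds in the statement agree; your direct Chernoff bound on $e(G(n,p))$ is an equally valid shortcut.
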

\begin{proof}
Since $\ex(G(n,p),H)$ is an increasing function of $e(G)$, it suffices to prove the claimed bound in the case
$p\geq n^{-\frac{1}{m_2(H)}}$.  
Given such a function $p=p(n)$, define $k=p^{-\frac{1}{\lam(\al, H)}}$.  Note that  $k\leq n^{2-\al}$.   
Suppose that there exists an $H$-free subgraph $I\subset G(n,p)$ with $m$ edges. Then by Theorem~\ref{container-main} with our choice of $k$ there exist functions $g,  h$ on the set of independent sets, with the property $g(I)\subset I \subset h(g(I))\cup g(I)$. Therefore, we know that   $g(I) \subset G(n,p)$ and $G(n,p)$ has at least $m - e(g(I))$ edges of $h(g(I))$.  The probability of this event
is at most
\[\sum_{S\in\cS}\binom{kn^\al}{m-e(S)}\cdot p^m \leq 
\sum_{s=0}^{C'k^{1-\lam(\al,H)} n^\al }\left (\frac{C'p^{\frac{\lam(\al,H)-1}{\lam(\al,H)}} n^\al}{s} \right) ^{\frac{\lam(\al,H)}{\lam(\al,H)-1} \cdot s}\cdot 
\expo\left(C'k^{1-\lam(\al,H)} n^\al\right) \cdot \left( \frac{3pkn^\al}{m-s} \right) ^{m-s}\]
\[\leq \expo\left( O(1)\cdot (p^{\frac{\lam(\al,H)-1}{\lam(\al,H)}} n^\al + k^{1-\lam(\al,H)} n^\al ) \right) \cdot \left( \frac{4pkn^\al}{m}\right)^{m/2}
\to 0,\]
as $n\to\infty$, as long as 
\[m\geq C \cdot  \max\{p^{\frac{\lam(\al,H)-1}{\lam(\al,H)}} n^\al, k^{1-\lam(\al,H)} n^\al , pkn^\al\}
= C p^{\frac{\lam(\al,H)-1}{\lam(\al,H)}} n^\al,
\]
for some sufficiently large constant $C=C(\al, H)$, where we used the definition of  $k=p^{-\frac{1}{\lam(\al,H)}}$.
\end{proof}

\begin{theorem} [Restatement of Theorem~\ref{random-Turan2}]    \label{turanES}
Let $H$ be a bipartite graph with $h$ vertices and $\ell$ edges such that $H$ contains a cycle and
is Erd\H{o}s-Simonovits good. Let $A,\al$ be positive reals such that $\ex(n,H)\leq An^\al$
for every $n\in \mathbb{N}$.  There exists a constant $C=C(\al,  H)$ such that 
\[\ex(G(n,p), H)\leq \begin{cases}  Cn^{\alpha - \phi(\al, H)}(\log(n))^{2} & \text{if } p\leq n^{-\frac{\phi(\al, H)\lam^*(\al,H)}{\lam^*(\al, H)-1} } \cdot (\log n)^{\frac{2\lam^*(\al, H)}{\lam^*(\al, H)-1}} 
\\
Cp^{1-\frac{1}{\lam^*(\al, H)} }n^\al & \text{otherwise}
\end{cases}\]
with high probability as $n\to \infty$.
\end{theorem}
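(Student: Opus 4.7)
The proof follows that of Theorem~\ref{random-Turan1} almost verbatim, with Theorem~\ref{container-ES} replacing Theorem~\ref{container-main} and $\lam^*(\al,H)$ replacing $\lam(\al,H)$. First I reduce to the ``otherwise'' regime by monotonicity: because $G(n,p_1)$ couples as a subgraph of $G(n,p_2)$ for $p_1\le p_2$, any upper bound on $\ex(G(n,p),H)$ valid with high probability at $p_2$ also transfers to $p_1$. Set $p_0:=n^{-\phi(\al,H)\lam^*(\al,H)/(\lam^*(\al,H)-1)}(\log n)^{2\lam^*(\al,H)/(\lam^*(\al,H)-1)}$, the threshold in the statement; a direct calculation yields $p_0^{1-1/\lam^*(\al,H)}n^\al=n^{\al-\phi(\al,H)}(\log n)^2$, so proving the ``otherwise'' bound for every $p\ge p_0$ automatically recovers the ``if'' bound for every $p<p_0$.

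Given such $p$, I set $k:=p^{-1/\lam^*(\al,H)}$. A quick manipulation shows that the condition $p\ge p_0$ is exactly equivalent to $k\le n^{\phi(\al,H)/(\lam^*(\al,H)-1)}/(\log n)^{2/(\lam^*(\al,H)-1)}$, which is precisely the hypothesis required by Theorem~\ref{container-ES}. Applying that theorem produces a collection $\cS$ of colored graphs and maps $g:\I(n)\to \cS$ and $h:\cS\to \G(n,k)$ such that every $H$-free subgraph $I\subset G(n,p)$ satisfies $g(I)\subset I\subset h(g(I))\cup g(I)$. In particular, if such $I$ has $m$ edges, then $S:=g(I)\subset G(n,p)$ and at least $m-e(S)$ edges of $h(S)$ lie in $G(n,p)$.

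Third, I union bound over $S\in\cS$ and over the placement of the remaining $m-e(S)$ edges. Summing the resulting probabilities, using the size bound from Theorem~\ref{container-ES} and absorbing $p^s$ into the first factor via the identity $(p^{(\lam^*(\al,H)-1)/\lam^*(\al,H)})^{\lam^*(\al,H)/(\lam^*(\al,H)-1)}=p$, the probability that $G(n,p)$ contains an $H$-free subgraph with $m$ edges is at most
\[
\sum_{s\ge 0}\left(\frac{Cp^{\frac{\lam^*(\al,H)-1}{\lam^*(\al,H)}}n^\al}{s}\right)^{\frac{\lam^*(\al,H)}{\lam^*(\al,H)-1}\,s}\!\!\expo\!\left(Ck^{1-\lam^*(\al,H)}n^\al\right)\left(\frac{3pkn^\al}{m-s}\right)^{m-s}.
\]
The choice $k=p^{-1/\lam^*(\al,H)}$ equates $k^{1-\lam^*(\al,H)}=pk=p^{(\lam^*(\al,H)-1)/\lam^*(\al,H)}$. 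Bounding the first factor by its pointwise maximum $\expo(O(p^{(\lam^*(\al,H)-1)/\lam^*(\al,H)}n^\al))$ and the last factor by $(6p^{(\lam^*(\al,H)-1)/\lam^*(\al,H)}n^\al/m)^{m/2}$ for $s\le m/2$, the entire sum tends to $0$ as $n\to\infty$ whenever $m\ge Cp^{1-1/\lam^*(\al,H)}n^\al$ for a sufficiently large $C=C(\al,H)$, which is exactly the desired conclusion.

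The main obstacle is aligning the case threshold in the statement with the hypothesis of Theorem~\ref{container-ES}: the latter's bound $k\le n^{\phi(\al,H)/(\lam^*(\al,H)-1)}/(\log n)^{2/(\lam^*(\al,H)-1)}$ is exactly saturated by $k=p^{-1/\lam^*(\al,H)}$ at $p=p_0$, which both forces the two-case form of the conclusion and explains why the small-$p$ branch cannot be obtained by a direct container application but must instead be inherited by monotonicity from the threshold. Once this alignment is in place, the rest of the argument is a structural copy of the proof of Theorem~\ref{random-Turan1}.
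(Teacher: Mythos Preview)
Your proposal is correct and follows essentially the same approach as the paper: reduce to the large-$p$ regime by monotonicity, set $k=p^{-1/\lam^*(\al,H)}$ so that the hypothesis of Theorem~\ref{container-ES} is met, and then union bound over fingerprints $S\in\cS$ exactly as in the proof of Theorem~\ref{random-Turan1}. You are somewhat more explicit than the paper in verifying the threshold arithmetic (e.g.\ that $p_0^{1-1/\lam^*}n^\al=n^{\al-\phi}(\log n)^2$ and that $k^{1-\lam^*}=pk=p^{(\lam^*-1)/\lam^*}$), but the argument is otherwise identical.
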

\begin{proof}
Since $\ex(G(n,p),H)$ is an increasing function of $e(G)$, it suffices to prove the claimed bound in the case
$p\geq n^{-\frac{\phi(H)\lam^*(\al,H)}{\lam^*(\al,H)-1} } \cdot (\log n)^{\frac{2\lam^*(\al,H)}{\lam^*(\al, H)-1}}$.
Let $k = p^{-\frac{1}{\lam^*(\al,H)}}$. Note that by our choice of $p$,
\[k\leq n^{\frac{\phi(\al,H)}{\lam^*(\al, H)-1}} / (\log n)^{\frac{2}{\lam^*(\al, H)-1}}.\]
Suppose that there exists an $H$-free subgraph $I \subset G(n,p)$ with $m$ edges. Then by Theorem~\ref{container-ES} with our choice of $k$ there exist functions $g,  h$ such that  $g(I) \subset G(n,p)$ and $G(n,p)$ has at least $m - e(g(I))$ edges of $h(g(I))$.  The probability of this event
is at most
\[\sum_{S\in\cS}\binom{kn^\al}{m-e(S)}\cdot p^m \leq 
\sum_{s=0}^{C'k^{1-\lam^*(\al,H)} n^\al} \left (\frac{C'p^{\frac{\lam^*(\al,H)-1}{\lam^*(\al,H)}} n^\al}{s} \right) ^{\frac{\lam^*(\al,H)}{\lam^*(\al,H)-1} \cdot s}\cdot 
\expo\left(C'k^{1-\lam^*(\al,H)} n^\al\right) \cdot \left( \frac{3pkn^\al}{m-s} \right) ^{m-s}\]
\[\leq \expo\left( O(1)\cdot (p^{\frac{\lam^*(\al,H)-1}{\lam^*(\al, H)}} n^\al + k^{1-\lam^*(\al,H)} n^\al ) \right) \cdot \left( \frac{4pkn^\al}{m}\right)^{m/2}
\to 0,\]
as $n\to\infty$, as long as 
\[m\geq C \cdot \max\{p^{\frac{\lam^*(\al,H)-1}{\lam^*(\al,H)}} n^\al, k^{1-\lam^*(\al,H)} n^\al , pkn^\al\} = Cp^{\frac{\lam^*(\al,H)-1}{\lam^*(\al,H)}} n^\al\] for some sufficiently large  $C(\al, H)$, where we used the definition of $k = p^{-\frac{1}{\lambda*(\al, H)}}$. 
\end{proof}

As mentioned earlier, Theorem~\ref{turanES} implies Morris and Saxton's result on $C_{2\ell}$.

\begin{corollary}[\cite{MS}] \label{c2ell-random}
For every $\ell\geq 2$, there exists a constant $C=C(\ell)>0$ such that

\[\ex(G(n,p), C_{2\ell})\leq \begin{cases}  Cn^{1 + 1/(2\ell - 1)}(\log(n))^{2} & \text{if } p\leq n^{- (\ell - 1)/(2\ell-1)}  \cdot (\log n)^{2\ell} 
\\
Cp^{1/\ell } n^{1 + 1/\ell} & \text{otherwise}
\end{cases}\]

with high probability as $n\to \infty$.
\end{corollary}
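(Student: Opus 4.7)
The plan is to apply Theorem~\ref{turanES} directly to $H=C_{2\ell}$ and simplify the resulting expressions. Three ingredients are needed to verify the hypotheses: $C_{2\ell}$ contains a cycle (namely itself); by the classical Bondy--Simonovits bound there is a constant $A=A(\ell)$ with $\ex(n,C_{2\ell})\leq A n^{1+1/\ell}$, so we may take $\al=1+1/\ell$; and as noted in the introduction (via F\"uredi--Simonovits, see also \cite{MS}), $C_{2\ell}$ is Erd\H{o}s--Simonovits good at this $\al$.

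Next I would compute the two structural parameters that appear in Theorem~\ref{turanES}. In the theorem's notation, $H=C_{2\ell}$ has $h=2\ell$ vertices and the symbol $\ell$ in the theorem equals $e(C_{2\ell})=2\ell$. The proper $2$-density of $C_{2\ell}$ equals $m^*_2(C_{2\ell})=1$, achieved by any strict subgraph (paths and their disjoint unions), hence
\[
\lam^*(\al,C_{2\ell})=\frac{1}{m^*_2(C_{2\ell})(2-\al)}=\frac{1}{1-1/\ell}=\frac{\ell}{\ell-1}.
\]
Plugging $e(H)=v(H)=2\ell$ and $\al=1+1/\ell$ into the definition of $\phi$ gives
\[
\phi(\al,C_{2\ell})=\frac{\al\cdot 2\ell-\al+2\ell-4\ell}{2\ell-1}=\frac{1-1/\ell}{2\ell-1}=\frac{\ell-1}{\ell(2\ell-1)}.
\]

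With these values in hand, I would substitute into Theorem~\ref{turanES}. A short calculation yields
\[
\frac{\lam^*}{\lam^*-1}=\ell,\qquad \frac{2\lam^*}{\lam^*-1}=2\ell,\qquad \frac{\phi\,\lam^*}{\lam^*-1}=\frac{\ell-1}{2\ell-1},
\]
so the case-split threshold of Theorem~\ref{turanES} becomes exactly $p\leq n^{-(\ell-1)/(2\ell-1)}(\log n)^{2\ell}$, matching the corollary. In the small-$p$ regime, the exponent is
\[
\al-\phi=1+\frac{1}{\ell}-\frac{\ell-1}{\ell(2\ell-1)}=\frac{2\ell}{2\ell-1}=1+\frac{1}{2\ell-1},
\]
yielding $Cn^{1+1/(2\ell-1)}(\log n)^2$. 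In the large-$p$ regime, $1-1/\lam^*=1/\ell$, giving $Cp^{1/\ell}n^{1+1/\ell}$. Both match the claimed bounds.

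There is essentially no obstacle in the proof itself beyond bookkeeping: the only mild pitfall is that the symbol $\ell$ plays two distinct roles (the index in $C_{2\ell}$ versus the number of edges $e(H)=2\ell$ in the statement of Theorem~\ref{turanES}), so one must track them carefully when substituting into the formulas for $\phi$ and $\lam^*$. All other content---the Bondy--Simonovits Tur\'an bound and the Erd\H{o}s--Simonovits goodness of even cycles---is quoted from the literature, making the corollary a direct specialization of Theorem~\ref{turanES}.
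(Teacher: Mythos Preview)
Your proposal is correct and follows exactly the same route as the paper: verify that $C_{2\ell}$ is Erd\H{o}s--Simonovits good with $\al=1+1/\ell$, compute $\lam^*(\al,C_{2\ell})=\frac{\ell}{\ell-1}$ and $\phi(\al,C_{2\ell})=\frac{\ell-1}{\ell(2\ell-1)}$, and then plug into Theorem~\ref{turanES}. In fact you supply more detail than the paper, which merely records the values of $\lam^*$ and $\phi$ and asserts that the corollary follows directly; your explicit checks of the threshold exponent, of $\al-\phi=1+1/(2\ell-1)$, and of $1-1/\lam^*=1/\ell$, as well as your remark about the double role of the symbol $\ell$, are all accurate and helpful.
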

\begin{proof}
It is known that $C_{2\ell}$ is Erd\H{o}s-Simonovits good with $\al=1+1/\ell$.
Apply Theorem~\ref{turanES} with $H = \C_{2\ell}$ and $\al=1+1/\ell$,
 noting that $\lambda^*(\al,C_{2\ell}) = \frac{\ell}{\ell -1}, \phi(\al, H)= \frac{\ell -1}{\ell(2\ell-1)}$. The corollary then follows directly. 
\end{proof}






\bibliographystyle{amsplain}

\begin{thebibliography}{99}
\bibitem{AKS} N. Alon, M. Krivelevich, B. Sudakov, Tur\'an numbers of
bipartite graphs and related Ramsey-type questions, 
\emph{Combin. Probab. Comput.} \textbf{12} (2003), 477-494.

\bibitem{ARS} N. Alon, L. R\'onyai and T. Szab\'o, Norm-graphs: variations and applications,
\emph{J. Combin. Theory Ser. B} \textbf{76} (1999), 280-290.

\bibitem{BBS1} J. Balogh, B. Bollob\'as, M. Simonovits, On the number of graphs without 
fobidden subgraph, \emph{J. Combin. Th. Ser. B} \textbf{91} (2004), 1-24.

\bibitem{BBS2} J. Balogh, B. Bollob\'as, M. Simonovits, The typical structue of graphs
without given excluded subgraphs, 
\emph{Random Struct. Alg. } \textbf{34} (2009), 305-318.


\bibitem{BBS3} J. Balogh, B. Bollob\'as, M. Simonovits,  The fine structure of octahedron-free graphs,
\emph{J. Combin. Th. Ser. B} \textbf{101} (2011), 67-84.


\bibitem{BNS} J. Balogh, B. Narayanan, J. Skokan, The number of hypergraphs without linear cycles,
\emph{J. Combin. Th. Ser. B} \textbf{134} (2019), 309-321.

\bibitem{BMS} J. Balogh, R. Morris, W. Samotij, Independent sets in hypergraphs, \emph{J. Amer. Math. Soc.}
\textbf{28} (2015), 669-709.

\bibitem{BS1}  J. Balogh, W. Samotij, The number of $K_{m,m}$-free graphs, \emph{Combinatorica}
\textbf{31} (2011), 131-150.

\bibitem{BS2} J. Balogh, W. Samotij, The number of $K_{s,t}$-free graphs, 
\emph{J. Lond. Math. Soc.} \textbf{83} (2011), 368-388.

\bibitem{BB} P. Bennett, T. Bohman  A note on the random greedy independent set algorithm, \emph{Random Structures \& Algorithms}, \textbf{49} (2016), no. 3, 479-502. 

\bibitem{BK} T. Bohman, P. Keevash, The early evolution of the $H$-free process,
\emph{Invent. Math.} \textbf{181} (2010), 291-336.


\bibitem{BBK} P. Blagojevi\'c, B. Bukh, R. Karasev, Tur\'an numbers for $K_{s,t}$-free
graphs: topological obstructions and algebraic constructions, \emph{Israel J. Math }
\textbf{197} (2013), 199-214.

\bibitem{bukh} B. Bukh, Random algebraic constructions of extremal graphs,
\emph{Bull. Lond. Math. Soc.} \textbf{47} (2013), 199-214.

\bibitem{BC} B. Bukh, D. Conlon, Rational exponents in extremal graph theory,
\emph{J. Eur. Math. Soc. (JEMS)} (2018), 1747-1757.

\bibitem{conlon} D. Conlon, Graphs with few paths of prescribed length between
any two vertices, \emph{Bull. Lond. Math. Soc.} \textbf{51} (2019), 1015-1021.


\bibitem{CFS} D. Conlon, J. Fox, and B. Sudakov, An approximate version of
Sidorenko's conjecture, \emph{Geom. Funct. Anal.} \textbf{20} (2010), 1354-1366.

\bibitem{CG} D. Conlon, T. Gowers, Combinatorial theorems in sparse random sets, 
\emph{Ann. of Math.} \textbf{184} (2016), 367-454.

\bibitem{CJ} D. Conlon, O. Janzer, Rational exponents near two,
  \emph{Advances in Combinatorics} (2022), Paper 9, 10pp.

	
\bibitem{CJL} D. Conlon, O. Janzer and J. Lee,
More on the extremal number of subdivisions, \emph{Combinatorica} \textbf{41} (2021), 465-494.

\bibitem{erdos} P. Erd\H{o}s, On the combinatorial problems which I would most like to see solved,
\emph{Combinatorica} \textbf{1} (1981), 25-42.


\bibitem{EFR} P. Erd\H{o}s, P. Frankl, V. R\"odl, The asymptotic number of graphs not
containing a fixed subgraph and a problem for hypergraphs having no exponent,
\emph{Graph. Combin.} \textbf{2} (1986), 113-121.


\bibitem{EKR} P. Erd\H{o}s, D.J. Kleitman, B.L. Rothschild, Asymptotic enumeration of $K_n$-free graphs,
in Colloquio Internazionale sulle Teorie Combinatorie, vol II, Rome, 1973, in Atti dei Convegni Lincei,
vol 17, Acad. Naz. Lincei, Rome, 1976,  pp.19-27.

\bibitem{Erdos1965} P. Erd\H{o}s,  Extremal problems in graph theory, in: M. Fiedler (Ed.), Theory of Graphs and Its
Applications, 2nd ed., Proc. Symp., Smolenice, 1963, Academic Press, New York, 1965, pp29-36.

\bibitem{ES-1966} P. Erd\H{o}s, M. Simonovits, A limit theorem in graph theory,
\emph{Studia Sci. Math. Hungar.} \textbf{1} (1966), 215-235.


\bibitem{ES-supersat} P. Erd\H{o}s, M. Simonovits, Supersaturated graphs and hypergraphs,
\emph{Combinatorica} \textbf{3} (1983), 181-192.

\bibitem{ES-cube} P. Erd\H{o}s, M. Simonovits, Cube-supersaturated graphs and
related problems, \emph{Progress in Graph theory (Waterloo, Ont., 1982)},
203-218, Academic Press, Toronto, 1984.

\bibitem{FS} R. Faudree, M. Simonovits, Cycle-superaturated graphs, unpublished.

\bibitem{ES-stone} P. Erd\H{o}s,  A. Stone, On the structure of linear graphs, \emph{Bull. Amer. Math.
Soc.} \textbf{52} (1946), 1087-1091. 

\bibitem{FMS} A. Ferber, G. McKinley, W. Samotij, Supersaturated sparse graphs and hypergraphs,
\emph{Int. Math. Res. Not.} \textbf{IMRN 2020} No. 2, 378-402.


\bibitem{Furedi} Z. F\"uredi, On a Tur\'an type problem of Erd\H{o}s, \emph{Combinatorica}
\textbf{11} (1991), 75-79.


\bibitem{FS-survey} Z. F\"uredi and M. Simonovits, The history of the degenerate (bipartite) extremal graph problems,
\emph{Erd\H{o}s centennial, Bolyai Soc. Math. Stud. } \textbf{25}, 169-264, J\'anos  Bolyai Math. Soc.,
Budapest, 2013. See also arXiv:1306.5167.


\bibitem{GJN} A. Grzesik, O. Janzer, Z. Nagy, The Tur\'an number of blowups
of trees, to appear in \emph{J. Combin. Theory Ser. B}. See also arXiv:1904.07219v1.

\bibitem{HKL} P. Haxell, Y. Kohayakawa, T. {\L}uczak, Tur\'an's extremal problem in random graphs:
forbidding even cycles, \emph{J. Combin. Th. Ser. B} \textbf{64} (1995), 273-287.


\bibitem{Janzer}  O. Janzer, The extremal number of the subdivisions of the complete bipartite graph,
\emph{SIAM J. Discrete Math.} \textbf{34} (2020), 241-250.

\bibitem{JJM} T. Jiang, Z. Jiang, J. Ma, Negligible obstructions and Tur\'an exponents,
\emph{Ann. Appl. Math.} \textbf{38} (2022), 1-29.

\bibitem{JL} T. Jiang, S. Longbrake, Tree-degenerate graphs and nested dependent random choice,
arXiv:2201.10699v1.
	
\bibitem{JMY} T. Jiang, J. Ma, and L. Yepremyan,  On Tur\'an exponents of bipartite graphs,
\emph{Combin. Probab. Comput.} \textbf{31} (2022), 333-344.
	


\bibitem{JQ} T. Jiang, Y. Qiu, Many Tur\'an exponents via subdivisions, 
\emph{Combin. Probab. Comput.}, published online, July 2022, https://doi.org/10.1017/S0963548322000177		
		
\bibitem{KKL} D. Y. Kang, J. Kim, and H. Liu, On the rational Tur\'an exponent conjecture,
\emph{J. Combin. Theory Ser. B} \textbf{148} (2021), 149-172.

\bibitem{KW1982} D. Kleitman, K. Winston, On the number of graphs without $4$-cycles,
\emph{Discrete Math.} \textbf{41} (1982), 267-172.

\bibitem{KKS} Y. Kohayakawa, B. Kreuter, A. Steger, An extremal problem for random graphs
and the number of graphs with large even-girth, \emph{Combinatorica} \textbf{18} (1998), 101-120.

\bibitem{KPR} Ph.G. Kolaitis, H.J. Pr\"omel, B.L. Rothschild, $K_{t+1}$-free graphs: asymptotic structure and
a $0-1$ law, \emph{Trans. Amer. Math. Soc.} \textbf{303} (1987), 637-671.

\bibitem{KST} T. K\H{o}v\'ari, V.T. S\'os and P. Tur\'an, On a problem of K. Zarankiewicz, \emph{Colloq. Math. } \textbf{3} (1954), 50-57.

\bibitem{KRS} J. Koll\'ar, L. R\'onyai, T. Szab\'o, Norm graphs and bipartite Tur\'an numbers,
\emph{Combinatorica} \textbf{16} (1996), 399-406.

\bibitem{MS} R. Morris, D. Saxton, The number of $C_{2\ell}$-free graphs, \emph{Advances in Math.} \textbf{208}
(2016), 534-580.

\bibitem{MW} D. Mubayi, L. Wang, The number of triple systems without even cycles,
\emph{Combinatorica} \textbf{39}  (2019), 679-704.

\bibitem{MY} D. Mubayi, L. Yepremyan, Random Tur\'an theorem for hypergraph cycles, 
arXiv:2007.10320v1.

\bibitem{NRS} B. Nagle,V. R\"odl, M. Schacht, Extremal hypergraph problems and the regularity method,
in: Topics in Discrete Mathematics, in: Algorithms Combin., vol 26, 2006, pp. 247-278.

\bibitem{PS} H.J. Pr\"omel, A. Steger, The asymptotic number of graphs not containing a fixed color-critical
subgraph, \emph{Combinatorica} \textbf{12} (1992), 463-473.

\bibitem{RS} V. R\"odl, M. Schacht, Extremal results in random graphs, in Erd\H{o}s Centennial, in:
Bolyai Soc. Math. Stud., vol 25, 2013, pp. 535-583.

\bibitem{ST} D. Saxton, A. Thomason, Hypergraphs containers, \emph{Invent. Math. }\textbf{201} (2015), 1-68.

\bibitem{Schacht} M. Schacht, Extremal results for random discrete structures, \emph{Ann. of Math.} 
\textbf{184} (2016), 331-363.

\bibitem{turan} P. Tur\'an, Eine Extremalauffgabe aus Graphentheorie, Mat. Fiz. Lapok \textbf{48} (1941), 436-452.


\end{thebibliography}


\begin{aicauthors}
\begin{authorinfo}[tjiang]
          Tao Jiang \\
  Department of Mathematics\\
  Miami University\\
  Oxford, OH 45056, USA.\\
  jiangt\imageat{}miamioh\imagedot{}edu 
\end{authorinfo}
\begin{authorinfo}[slong]
  Sean Longbrake\\
   Department of Mathematics\\
  Miami University\\
  Oxford, OH 45056, USA.\\
  Currently At: \\
  Department of Mathematics\\ Emory University \\ Atlanta, GA 30322, USA. \\
  sean\imagedot{}longbrake\imageat{}emory\imagedot{}edu
\end{authorinfo}
\end{aicauthors}

\end{document}